\definecolor{darkred}{rgb}{0.5,0,0}
\theoremstyle{plain}
\newtheorem{Lma}{Lemma}[section]
\newtheorem{Prp}[Lma]{Proposition}
\newtheorem{Thm}[Lma]{Theorem}
\newtheorem{Cor}[Lma]{Corollary}
\theoremstyle{definition}
\newtheorem{Def}[Lma]{Definition}
\theoremstyle{remark}
\newcommand{\N}{\ensuremath{\mathbb{N}}}
\newcommand{\falsum}{\bot}
\newcommand{\limp}{\rightarrow}
\newcommand{\liff}{\leftrightarrow}
\newcommand{\bigland}{\bigwedge}
\renewcommand{\=}{\ensuremath{\mathrel{=\!\!\!\!=}}}
\newcommand{\0}{\mathsf{0}}
\newcommand{\nonequality}{\mathrel{=\!\!\!\!\not=}}
\newcommand{\num}[1]{\ensuremath{\overline{#1}}}
\newcommand{\GN}[1]{\ensuremath{\ulcorner #1 \urcorner}}
\newcommand{\sep}{\; | \;}
\newcommand{\PA}{\ensuremath{\mathrm{PA}}}
\NewDocumentCommand{\prf}{ O{} }{\mathrel{\hbox{%
    \oalign{$\vdash$\cr%
            \noalign{\kern-2.5ex}%
            \hfil$\scriptstyle\thinspace #1$\hfil\cr}%
    }}}
\renewcommand{\L}{\ensuremath{\mathcal{L}}}
\DeclareMathOperator{\FV}{FV}
\DeclareMathOperator{\val}{val}
\DeclareMathOperator{\I}{I}
\DeclareMathOperator{\POS}{POS}
\newcommand{\TND}{\ensuremath{\mathrm{LEM}}}
\newcommand{\LPRA}{\ensuremath{\L_{\PRA}}}
\newcommand{\LHA}{\ensuremath{\L_{\HA}}}
\newcommand{\LHAP}{\ensuremath{\L_{\HAP}}}
\newcommand{\LID}{\ensuremath{\L(\mathrm{ID})}}
\newcommand{\LP}{\ensuremath{\L(P)}}
\newcommand{\LIDHA}{\ensuremath{\LHA(\mathrm{ID})}}
\newcommand{\LIDHAP}{\ensuremath{\LHAP(\mathrm{ID})}}
\newcommand{\LPHA}{\ensuremath{\LHA(P)}}
\newcommand{\LPHAP}{\ensuremath{\LHAP(P)}}
\newcommand{\ID}[1]{\ensuremath{\widehat{\mathrm{ID}}_{#1}}{}}
\newcommand{\IID}[1]{\ensuremath{\widehat{\mathrm{ID}}{}_{#1}^{\mathrm{i}}{}}}
\newcommand{\IIDP}[1]{\ensuremath{\widehat{\mathrm{ID}}{}_{#1}^{\mathrm{i}}\mathrm{P}{}}}
\newcommand{\HA}{\ensuremath{\mathrm{HA}}}
\newcommand{\PRA}{\ensuremath{\mathrm{PRA}}}
\newcommand{\PRHA}{\ensuremath{\HA_{\LPRA}}}
\newcommand{\PRPA}{\ensuremath{\PA_{\LPRA}}}
\newcommand{\HAP}{\ensuremath{\mathrm{HAP}}}
\newcommand{\PAP}{\ensuremath{\mathrm{PAP}}}
\newcommand{\LPT}{\ensuremath{\mathrm{LPT}}}
\newcommand{\KF}{\ensuremath{\mathrm{KF}}}
\newcommand{\bIID}[1]{\ensuremath{\mathbf{\widehat{ID}{}_{#1}^{i}}{}}}
\newcommand{\bIIDP}[1]{\ensuremath{\mathbf{\widehat{ID}{}_{#1}^{i}P}{}}}
\newcommand{\bHAP}{\ensuremath{\mathbf{HAP}}}
\newcommand{\bLPT}{\ensuremath{\mathbf{LPT}}}
\newcommand{\AnHier}{\ensuremath{\Lambda}}
\newcommand{\AnHierN}{\ensuremath{\AnHier^{\mathrm{nf}}}}
\newcommand{\AnHierC}{\AnHier}
\newcommand{\bAnHierC}{\ensuremath{\mathbf{\bAnHierC}}}
\newcommand{\Ax}{\ensuremath{\mathrm{Ax}}}
\newcommand{\rea}{\mathbin{\underline{\mathrm{r}}}}
\newcommand{\reha}{\mathbin{\underline{\hat{\mathrm{r}}}}}
\DeclareMathOperator{\conjs}{conjs}
\newcommand{\Sigge}{\ensuremath{\Sigma\mathrm{e}}}
\newcommand{\den}{\ensuremath{\mathord\downarrow}}
\newcommand{\app}{\ensuremath{\cdot}}
\newcommand{\cok}{\ensuremath{\mathbf{k}}}
\renewcommand{\cos}{\ensuremath{\mathbf{s}}}
\newcommand{\cop}{\ensuremath{\mathbf{p}}}
\newcommand{\copl}{\ensuremath{\mathbf{p}_{\mathrm{l}}}}
\newcommand{\copr}{\ensuremath{\mathbf{p}_{\mathrm{r}}}}
\newcommand{\cosucc}{\ensuremath{\mathbf{succ}}}
\newcommand{\cor}{\ensuremath{\mathbf{r}}}
\newcommand{\cod}{\ensuremath{\mathbf{d}}}
\newcommand{\coid}{\ensuremath{\mathbf{id}}}
\newcommand{\cofix}{\ensuremath{\mathbf{fix}}}
\newcommand{\comin}{\ensuremath{\mathbf{min}}}
\DeclareMathOperator{\rec}{rec}
\newcommand{\fv}{\ensuremath{\mathsf{v}}}
\newcommand{\fdsub}{\ensuremath{\mathtt{sub}}}
\newcommand{\fdval}{\ensuremath{\mathtt{val}}}
\newcommand{\fdSat}{\ensuremath{\mathtt{Sat}}}
\newcommand{\fdAnHierN}[1][]{\ensuremath{\mathtt{\AnHier}_{#1}^{\mathrm{nf}}}}
\newcommand{\fdlambda}{\ensuremath{\mathtt{\lambda}}}
\newcommand{\f}[1]{\ensuremath{\mathsf{#1}}}
\newcommand{\fZ}{\f{Z}}
\newcommand{\fS}{\f{S}}
\newcommand{\fdVar}{\ensuremath{\mathtt{Var}}}
\newcommand{\fdTerm}{\ensuremath{\mathtt{Term}}}
\newcommand{\fdFV}{\ensuremath{\mathtt{FV}}}
\newcommand{\fdBV}{\ensuremath{\mathtt{BV}}}
\newcommand{\fdconjs}{\ensuremath{\mathtt{conjs}}}
\newcommand{\fdSeq}{\ensuremath{\mathtt{Seq}}}
\newcommand{\fdlh}{\ensuremath{\mathtt{lh}}}
\newcounter{memory}
\title{Revisiting the conservativity of fixpoints over intuitionistic arithmetic}
\author{Mattias Granberg Olsson\\Graham E.\ Leigh}
\date{University of Gothenburg}
\begin{document}

\maketitle

\vspace{-3 ex}

\begin{abstract}
  This paper presents a novel proof of the conservativity of the intuitionistic theory of strictly positive fixpoints, $\IID1$, over Heyting arithmetic ($\HA$), originally proved in full generality by Arai (2011). The proof embeds $\IID1$ into the corresponding theory over Beeson's logic of partial terms and then uses two consecutive interpretations, a realizability interpretation of this theory into the subtheory generated by almost negative fixpoints, and a direct interpretation into Heyting arithmetic with partial terms using a hierarchy of satisfaction predicates for almost negative formulae. It concludes by applying van den Berg and van Slooten's result (2018) that Heyting arithmetic with partial terms plus the schema of self realizability for arithmetic formulae is conservative over $\HA$.

\end{abstract}

\pagenumbering{arabic}

\section{Introduction}
\label{sec:Intro}

An occurrence of a symbol in a formula $\varphi$ is \emph{strictly positive} if the occurrence is not inside an antecedent of an implication in $\varphi$. Given a parameter predicate $P$ we can thus form all formulae which have only strictly positive occurrences of $P$, referred to as the \emph{strictly positive operator forms}. Given such an operator form \( \Phi(P;x) \) with free variable \( x \) and writing \( \Phi(\varphi;x) \) for the result of substituting \( \varphi \) for \( P \) in \( \Phi \), a predicate $\I_{\Phi}$ satisfying
\begin{align}\label{eq:IDAx}
  \forall x ( \I_{\Phi}(x) \liff \Phi(\I_{\Phi};x) )
\end{align}
is then a \emph{fixpoint} of the operator form $\Phi$. The theory $\ID1$ is Peano arithmetic ($\PA$) extended with a new symbol $\I_{\Phi}$ and axiom \eqref{eq:IDAx} for all strictly positive operator forms $\Phi$, and $\IID1$ is the corresponding extension of Heyting arithmetic ($\HA$). Inductively, $\ID{n + 1}$ and $\IID{n + 1}$ are the corresponding extensions of $\ID{n}$ and $\IID{n}$ respectively. $\ID1$ is not conservative over $\PA$, since already $\ID1(\Pi_2)$, in which \eqref{eq:IDAx} holds only for strictly positive $\Pi_2$ operator forms $\Phi$, proves the consistency of $\PA$. On the contrary, $\IID1$ is known to be conservative over $\HA$.

In the 1997 paper~\cite{Buchholz:1997}, Wilfried Buchholz proved that the theory of fixpoints for \emph{strongly} positive operator forms, that is where \eqref{eq:IDAx} holds for operator forms $\Phi$ which contain no implications whatsoever, is conservative over $\HA$ for almost negative sentences. The result was soon improved to full conservativity by Toshiyasu Arai in~\cite{Arai:1998}. A few years later (2002), Christian R{\"u}ede and Thomas Strahm in~\cite{Ruede_Strahm:2002} made an improvement in another direction, by showing that the theory $\IID{\alpha}$, where $\alpha$ ($< \Gamma_0$) refers to ordinal iterations of the fixpoint construction, is conservative over the theory $\mathrm{ACA}^{-i}_{\alpha}$ (intuitionistic theory of iterated arithmetical comprehension without set parameters) for negative and $\Pi^0_2$ sentences. Their argument makes a realizability interpretation of $\IID{\alpha}$ in an appropriate fragment, which is subsequently interpreted in the classical theory $\mathrm{ACA}^{-}_{\alpha}$ via partial truth predicates; finally $\mathrm{ACA}^{-}_{\alpha}$ is conservative over $\mathrm{ACA}^{-i}_{\alpha}$ for negative and $\Pi^0_2$ sentences. As a corollary, $\IID{n}$ is conservative over $\HA$ for such sentences.

A few years after this series of partial results (2011), Arai in \cite{Arai:2011} finally showed, using cut-elimination of an infinitary derivation system formalised in $\HA$, that the theory $\IID1$ is fully conservative over $\HA$ (the paper actually shows that all $\IID{n}$ are conservative over $\HA$).

The aim of this paper is to reprove this important result by a new method. For clarity we first outline our argument for conservation of almost negative sentences only. This closely resembles that of R{\"u}ede and Strahm outlined above in that it combines two natural translations: a realizability interpretation of \( \IID{1} \) in the subtheory with the fixpoint axiom \eqref{eq:IDAx} restricted to almost negative operator forms, and a direct interpretation of this subtheory in Heyting arithmetic. As in \cite{Ruede_Strahm:2002}, the first step uses standard Kleene-style realizability in the manner of Buchholz' \cite[§6]{Buchholz:1981}. The main difference lies in the second reduction, which interprets fixpoint predicates for almost negative operator forms by partial satisfaction predicates definable in Heyting arithmetic and generalises the conservativity of fixpoints for strictly positive \( \Pi_1 \) operator forms over Peano arithmetic. In this way, the detour through classical logic used in \cite{Ruede_Strahm:2002} can be avoided and conservativity of $\IID1$ over $\HA$ obtained also for almost negative sentences outside $\Pi^0_2$, since, combining the two interpretations, if \( \varphi \) is an arithmetic theorem of \( \IID{1} \) we obtain that \( \varphi \) is realizable in \( \HA \) and thus \( \HA \vdash \varphi \) if \( \varphi \) is almost negative.

Full conservation can be obtained by the same argument if the theories in question are formulated over Beeson’s logic of partial terms (see~\cite{Beeson:1985}) rather than intuitionistic predicate logic. 
Let \( \HAP \) and \( \IIDP1 \) be Heyting arithmetic with partial terms and its extension to fixpoints for strictly positive operator forms respectively.
We show that every arithmetic theorem of \( \IID1 \) is realized in \( \HAP \) via an analogous combination of interpretations.
This involves a realizability interpretation of \( \IIDP1 \) in the subtheory of almost negative fixpoints, which we then show is a conservative extension of \( \HAP \). This conservativity is likewise witnessed by the definability of partial satisfaction predicates for almost negative formulae involving partial terms. 
The final step of concluding \( \HA \vdash \varphi \) from the realizability of \( \varphi \) in \( \HAP \) is a consequence of a result due to Benno van den Berg and Lotte van Slooten in \cite{vandenBerg_vanSlooten:2018} that for this realizability interpretation, Heyting arithmetic is conservatively extended by \( \HAP + \varphi \leftrightarrow \exists x (x \rea \varphi) \) for arithmetic $\varphi$, where \( x \rea \varphi \) expresses `\( x \) realizes \( \varphi \).'

This paper constitutes part of the first authors thesis, and the details of the omitted proofs will be presented in the dissertation \cite{MGO:202?}.

\subsection{Outline}
\label{sec:disp}

We will start by fixing basic notation and terminology in section \ref{sec:prel}. 
Heyting arithmetic with partial terms and the basic realizability interpretation we utilise is introduced in section \ref{sec:HAP}. The main results from \cite{vandenBerg_vanSlooten:2018} which we require are also rehearsed in that section. 
Section~\ref{sec:Lambda} concerns properties of the almost negative formulae. We present a hierarchy of formulae \( ( \AnHierC_n )_n \) based on quantifier complexity which exhausts the almost negative formulae and present for each \( n \) a \( \AnHierC_n \)-formula that provably in \( \HAP \) is a satisfaction predicate for \( \AnHierC_n \)-formulae.
Section~\ref{sec:Fix} overviews three intuitionistic fixpoint theories of import: the theory \( \IID{1} \) of strictly positive fixpoints over Heyting arithmetic, the counterpart theory over the logic partial terms, \( \IIDP{1} \), and its subtheory of fixpoints for almost negative operator forms, \( \IIDP{1}(\AnHierC) \), as well as interpretability results among them. The article concludes with proving the main result and a discussion of the methods and potential extensions, in section \ref{sec:Conc}.

\section{Preliminaries}
\label{sec:prel}

Our base languages will be $\LHA$, $\LPRA$ and $\LHAP$, all of which will contain the quantifiers $\forall$ and $\exists$, the connectives $\land$, $\lor$ and $\limp$, the propositional constant $\falsum$ and the equality relation $\=$ as logical symbols. $\LHA$, the language of Heyting arithmetic, in addition contains the constant symbol $\0$ and the function symbols $\fS$, $+$ and $\times$. Numerals in $\LHA$ will be constructed from $\0$ and $\fS$ in the usual way ($\num{0} = \0$, $\num{n + 1} = \fS(\num{n})$); the same holds for all languages we will consider. $\LPRA$, the language of primitive recursive arithmetic, contains the symbols of $\LHA$ as well as a function symbol for each primitive recursive function (including fresh symbols for addition and multiplication). The final language, $\LHAP$, of Heyting arithmetic with partial terms extends $\LHA$ by constant symbols $\cok$, $\cos$, $\copl$, $\copr$, $\cop$, $\cosucc$, $\cor$ and the binary function symbol $\app$. Since \( \app \) will occur more frequently than $\times$ we will often abbreviate it by juxtaposition and let $\times$ be written out. $\HA$ ($\PA$) is the intuitionistic (resp.\ classical) theory in $\LHA$ axiomatised by basic axioms for the successor $\fS$, defining equations for $+$ and $\times$, and induction for $\LHA$. Heyting arithmetic with partial terms, $\HAP$, is the theory in $\LHAP$ presented in \cite{vandenBerg_vanSlooten:2018} which we will return to in section \ref{sec:HAP}. $\HA(\L)$ and $\HAP(\L)$ will be $\HA$ and $\HAP$ with induction extended (or restricted) to $\L$, but with no additional axioms concerning the non-arithmetical symbols. Finally, $\PRHA$ is Heyting arithmetic axiomatised in $\LPRA$ with defining equations for all function symbols as axioms, essentially as in, for example, Troelstra and van~Dalen's \cite{Troelstra_vanDalen:1988a}. While there is no real difference between $\HA$ and $\PRHA$ (they are mutually interpretable), the small number of symbols and the fact that $\LHA \subseteq \LHAP$ will be advantageous from our perspective; at the same time, several formula-classes are less expressive in $\HA$ than in $\PRHA$, which makes \( \PRHA \) more desirable to work with in some situations.

We use an infinite sequence of variables, $\fv_0, \fv_1, \dotsc$; metavariables for these will be denoted by $x,y,z,u,v,w,\dotsc$ etc.. Terms are constructed from variables, constants and function symbols in the usual way; for disambiguation we will consider a variable and the term made up of only that variable as distinct syntactic objects, in particular when it comes to coding. We use $\tau,\sigma,\rho,\dotsc$ as metavariables for terms. We denote tuples of terms (or variables) with $\vec{\tau}$, or $\vec{\tau}^n$ if we want to emphasise the tuple has length $n$. Symbols $\Phi, \Psi, \phi,\varphi,\psi,\theta,\vartheta,\dotsc$ denotes formulae, where uppercase is used to emphasise that the formula is an operator form, in that it contains  distinguished predicate symbol (denoted by $P$, $Q$ or $R$). Formulae are identified up to $\alpha$-equivalence, and we define the Gödel code of a formula to be the (numerically) least code of any of the $\alpha$-equivalent formulae with no \emph{nested bindings}, i.e.\ two nested quantifiers binding the same variable. Regarding substitution: $\sigma[\vec{u}/\vec{\tau}]$, where $\sigma$ is a term, $\vec{u}$ is a finite sequence of variables and $\vec{\tau}$ is a sequence of terms of the same length, denotes simultaneous substitution of $\tau_i$ for $u_i$ in $\sigma$ for all indices $i$ of the sequences. Similarly $\varphi(\vec{u}/\vec{\tau})$, where $\varphi$ is a formula and $\vec{u}$ and $\vec{\tau}$ are as above, denotes simultaneous substitution of $\tau_i$ for the \emph{free} occurrences of $u_i$ in $\varphi$ for all indices $i$ of the sequences \emph{after renaming bound variables} in $\varphi$ to avoid conflicts. Since we identify $\alpha$-equivalent formulae this is permissible. The simpler expression $\varphi(\vec{\tau})$ means $\varphi(\vec{\fv}_{<n}/\vec{\tau})$, where $\vec{\fv}_{<n}$ is $\langle \fv_0, \dotsc, \fv_{n - 1}\rangle$, with $n$ the length of $\vec{\tau}$. When introducing a formula with a formulation of the kind `Let $\varphi(\vec{x})$ be a formula,' it is tacitly understood that $\vec{x}^n$  is without repetitions and no variables other than $\fv_0,\dotsc,\fv_{n - 1}$ occurs free in $\varphi$, that is all free occurrences of $\vec{x}$ in $\varphi(\vec{x})$ are obtained by the substitution; $\varphi(\vec{\tau})$ is subsequently the same as $\varphi(\vec{x})(\vec{x}/\vec{\tau})$. We use the corresponding notation for substitution of formulae for relations: $\Phi(R/\lambda \vec{x}. \vartheta)$, where the length of $\vec{x}$ is the arity of $R$, denotes simultaneous substitution in the formula $\Phi$ of the formula $\vartheta$ for the relation symbol $R$, where the $i$th argument of $R$ is substituted for the free occurrences of $x_i$ in $\vartheta$, after renaming bound variables in $\Phi$ and $\vartheta$.\footnote{It is of course important that no variable occurring as an argument to $R$ is bound in $\vartheta$.} Note, we do not require that $x_i$ is a free variable in $\vartheta$ or that all free variables of $\vartheta$ are among $\vec{x}$. If we omit the `$\lambda \vec{x}$' we mean that $\vec{x}$ is $\vec{\fv}_{<n}$ and if $\vartheta$ consists of a single $n$-ary relation symbol $Q$ we write $\Phi(R/Q)$ for $\Phi(R/\lambda \vec{v}_{< n}. Q(\vec{v}_{< n}))$, where we might also omit $R$ if it is clear from the context (as in section \ref{sec:Fix}). Finally, when expressing both relation and term substitution, denoted \( \Phi(\lambda \vec{x}. \vartheta;\vec{\tau}) \), term substitution takes precedence, namely  $\Phi(\lambda \vec{x}. \vartheta;\vec{\tau}) = (\Phi(\vec{\tau}))(\lambda \vec{x}. \vartheta)$.\footnote{Note that all choices involved can be made in an unambiguous (primitive recursive) way, which we will assume without specifying one.} We will abuse notation and write $\chi \in \L$ for any symbol, term or formula $\chi$ of the language $\L$. The Gödel code of an expression $\eta$ (in the language under consideration) will be denoted by $\GN{\eta}$. When using numerals of Gödel codes we suppress the bar, so that in e.g.\ $\HA \prf \GN{\eta} \nonequality \0$, `$\GN{\eta}$' refers to the numeral of the number $\GN{\eta}$.

We finally turn to notations for syntactic formula-classes. $\AnHierC$ will denote the class of almost negative formulae; a formula is \emph{almost negative} if it contains no disjunctions and existential quantifiers only occur immediately in front of term-equations. Note the unfortunate clash of terminology that some strictly positive operator forms (meaning the distinguished predicate \( P \) does not occur in the antecedent of an implication) will at the same time be almost negative formulae. Both of these concepts are standard in their fields, so there is little point in trying to avoid or change them here. \emph{Negative} formulae are the almost negative formulae which contains no existential quantifiers whatsoever. When concerned with a syntactic formula-class \( \Gamma \) such as $\AnHierC$ or $\Sigma_n$ (or $\AnHier_n$ introduced in section~\ref{sec:Lambda}) we will use the convention that \( \Gamma \) refers to the set of formulae belonging to this class in the language under consideration, while $\Gamma(\HA)$ refers to the set of formulae which are $\HA$-equivalent to formulae from $\Gamma$ in this language; the same principle applies to other theories like $\HAP$ and $\PA$. For example $\Sigma_1(\PA)$ is the set of formulae in $\LHA$ which are $\PA$-equivalent to $\Sigma_1$-formulae. Some caution must be used here since $\LHA$ does not contain the symbol $<$; $\Delta_0$-formulae (and subsequently the rest of the arithmetical hierarchy) are thus defined via the defined inequality: $x < y$ is $\exists z (x + \fS(z) \= y)$. If $\HA^{\prime}$ is Heyting arithmetic in $\LHA + \mathord <$, this yields $\HA^{\prime}$ as a definitional extension of $\HA$, with the arithmetical hierarchy in $\HA$ the image of the one in $\HA^{\prime}$.

\section[$\LPT$ and $\HAP$]{$\bLPT$ and $\bHAP$}
\label{sec:HAP}

For completeness and convenience we here briefly describe the systems $\LPT$ and $\HAP$, essentially as given in \cite{vandenBerg_vanSlooten:2018}. Most proofs will be omitted; we refer the reader to the references for details.

\begin{Def}[LPT]\label{def:LPT}
  Logic of Partial Terms, $\LPT$, is a Hilbert-style deductive system in a language containing at least equality with the following logical rules and axiom schemata, where $\tau\den$ abbreviates $\tau \= \tau$ and expresses that \emph{$\tau$ denotes}:\\
  \begin{minipage}[t]{0.52\linewidth}\renewcommand{\=}{\ensuremath{{\,=\!\!\!\!=\,}}}
    \vspace{0.5\lineskip}
    \begin{enumerate}
    \item[\refstepcounter{enumi}\LPT\arabic{enumi}.\label{ax:Id}] $\phi \limp \phi$
    \item[\stepcounter{enumi}\refstepcounter{enumi}\LPT\arabic{enumi}.\label{rule:Trans}] $\phi \limp \psi, \psi \limp \chi \Rightarrow \phi \limp \chi$
    \item[\stepcounter{enumi}\refstepcounter{enumi}\LPT\arabic{enumi}.\label{rule:Con}] $\phi \limp \psi, \phi \limp \chi \Rightarrow \phi \limp \psi \land \chi$
    \item[\stepcounter{enumi}\refstepcounter{enumi}\LPT\arabic{enumi}.\label{rule:Dis}] $\phi \limp \chi, \psi \limp \chi \Rightarrow \phi \lor \psi \limp \chi$
    \item[\stepcounter{enumi}\refstepcounter{enumi}\LPT\arabic{enumi}.\label{rule:Uncurry}] $\phi \limp (\psi \limp \chi) \Rightarrow \phi \land \psi \limp \chi$
    \item[\stepcounter{enumi}\refstepcounter{enumi}\LPT\arabic{enumi}.\label{rule:All}] $\phi \limp \psi \Rightarrow \phi \limp \forall x \psi$ for $x \not\in \FV(\phi)$
    \item[\stepcounter{enumi}\refstepcounter{enumi}\LPT\arabic{enumi}.\label{rule:Ex}] $\phi \limp \psi \Rightarrow \exists x \phi \limp \psi$ for $x \not\in \FV(\psi)$
    \item[\stepcounter{enumi}\refstepcounter{enumi}\LPT\arabic{enumi}.\label{ax:eqRef}] $\forall x (x \= x)$
    \item[\stepcounter{enumi}\refstepcounter{enumi}\LPT\arabic{enumi}.\label{ax:eqTrans}] $\forall x y z (x \= y \land y \= z \limp x \= z)$
    \item[\stepcounter{enumi}\refstepcounter{enumi}\LPT\arabic{enumi}.\label{ax:eqRel}] $\forall \vec{x} \vec{y} (R(\vec{x}) \land \vec{x} \= \vec{y} \limp R(\vec{y}))$
    \item[\stepcounter{enumi}\refstepcounter{enumi}\LPT\arabic{enumi}.\label{ax:StFunc}] $f(\vec{\tau})\den \limp \tau_i\den$
    \item[\stepcounter{enumi}\refstepcounter{enumi}\LPT\arabic{enumi}.\label{ax:StVar}] $x\den$
    \end{enumerate}
    \vspace{0.5\lineskip}
  \end{minipage}
  \begin{minipage}[t]{0.52\linewidth}\renewcommand{\=}{\ensuremath{{\,=\!\!\!\!=\,}}}
    \vspace{0.5\lineskip}
    \begin{enumerate}
    \item[\stepcounter{enumi}\refstepcounter{enumi}\LPT\arabic{enumi}.\label{rule:MP}] $\phi, \phi \limp \psi \Rightarrow \psi$
    \item[\stepcounter{enumi}\refstepcounter{enumi}\LPT\arabic{enumi}.\label{ax:Con}] $\phi \land \psi \limp \phi, \phi \land \psi \limp \psi$
    \item[\stepcounter{enumi}\refstepcounter{enumi}\LPT\arabic{enumi}.\label{ax:Dis}] $\phi \limp \phi \lor \psi, \psi \limp \phi \lor \psi$
    \item[\stepcounter{enumi}\refstepcounter{enumi}\LPT\arabic{enumi}.\label{rule:Curry}] $\phi \land \psi \limp \chi \Rightarrow \phi \limp (\psi \limp \chi)$
    \item[\stepcounter{enumi}\refstepcounter{enumi}\LPT\arabic{enumi}.\label{ax:EFQ}] $\falsum \limp \phi$
    \item[\stepcounter{enumi}\refstepcounter{enumi}\LPT\arabic{enumi}.\label{ax:All}] $\forall x \phi \land \tau\den \limp \phi(x/\tau)$
    \item[\stepcounter{enumi}\refstepcounter{enumi}\LPT\arabic{enumi}.\label{ax:Ex}] $\phi(x/\tau) \land \tau\den \limp \exists x \phi$
    \item[\stepcounter{enumi}\refstepcounter{enumi}\LPT\arabic{enumi}.\label{ax:eqSym}] $\forall x y (x \= y \limp y\= x)$
    \item[\stepcounter{enumi}\refstepcounter{enumi}\LPT\arabic{enumi}.\label{ax:eqFunc}] $\forall \vec{x} \vec{y} (\vec{x} \= \vec{y} \land f(\vec{x})\den \limp f(\vec{x}) \= f(\vec{y}))$
    \item[\stepcounter{enumi}\refstepcounter{enumi}\LPT\arabic{enumi}.\label{ax:StC}] $c\den$
    \item[\stepcounter{enumi}\refstepcounter{enumi}\LPT\arabic{enumi}.\label{ax:StRel}] $R(\vec{\tau}) \limp \tau_i\den$\setcounter{memory}{\value{enumi}}
    \end{enumerate}
    \vspace{0,5\lineskip}
  \end{minipage}
  Rules {\color{darkred}$\LPT$}\ref{rule:All} and {\color{darkred}$\LPT$}\ref{rule:Ex} are called (universal and existential) generalisation, axioms {\color{darkred}$\LPT$}\ref{ax:All} and {\color{darkred}$\LPT$}\ref{ax:Ex} (universal and existential) instantiation and {\color{darkred}$\LPT$}\ref{ax:StC}, {\color{darkred}$\LPT$}\ref{ax:StFunc}, {\color{darkred}$\LPT$}\ref{ax:StRel} and {\color{darkred}$\LPT$}\ref{ax:StVar} are the strictness axioms. An instance of {\color{darkred}$\LPT$}\ref{ax:StRel} is $\tau \= \sigma \limp {\tau\den} \land \sigma\den$.
  Axiom {\color{darkred}$\LPT$}\ref{ax:StVar} is not explicitly included in the axiomatisation in \cite{vandenBerg_vanSlooten:2018} but is consistent with it, being derivable from the the axioms of $\HAP$.
  Since the axiom is present in Beeson's axiomatisation in \cite{Beeson:1985} and the axiomatisation of Troelstra's \cite{Troelstra:1998}, as well as \cite{Troelstra_vanDalen:1988a} (there called E${}^+$-logic), we include it for simplicity.
  This also makes sense for $\alpha$-equivalence, which is built into this system by construction; given {\color{darkred}$\LPT$}\ref{ax:StVar} one can also derive it from (more carefully formulated) {\color{darkred}$\LPT$}\ref{ax:All} and {\color{darkred}$\LPT$}\ref{ax:Ex}.
\end{Def}

\begin{Def}\label{def:prf}
  If $\Gamma$ is a set of formulae and $\phi$ a formula then $\Gamma \prf[\mathscr{D}] \phi$ means $\mathscr{D}$ is an $\LPT$-derivation (sequence of formulae) of $\phi$ using $\Gamma$ as additional axioms.
  We write $\Gamma \prf \phi$ to express that $\Gamma \prf[\mathscr{D}] \phi$ for some \( \LPT \)-derivation $\mathscr{D}$.
  If $\Gamma$ is empty we write $\prf[\mathscr{D}] \phi$.
\end{Def}

The notion of derivation introduced above satisfies the deduction theorem. We can also define weak equality and obtain:

\begin{Lma}[`Leibniz Lemma' of \cite{vandenBerg_vanSlooten:2018}\label{lma:Leibniz}]
  Defining $\sigma \simeq \tau$ as $\sigma\den \lor\; \tau\den \limp \sigma \= \tau$ we have $\prf \vec{\sigma} \simeq \vec{\tau} \limp \f{f}(\vec{\sigma}) \simeq \f{f}(\vec{\tau})$ and $\prf \sigma \simeq \tau \land \phi(x/\tau) \limp \phi(x/\sigma)$ for all function symbols $\f{f}$ and all formulae $\phi$ of the language.
\end{Lma}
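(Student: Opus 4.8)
The plan is to argue informally with added hypotheses, which is legitimate since the derivation relation satisfies the deduction theorem, and to establish the two assertions in turn after recording a couple of auxiliary facts. The only non-propositional ingredients needed are the strictness axioms (\(\f f(\vec\tau)\den\limp\tau_i\den\); \(R(\vec\tau)\limp\tau_i\den\), in particular \(\sigma\=\tau\limp\sigma\den\land\tau\den\); \(x\den\); \(c\den\)), the congruence axioms for function and relation symbols, and the two instantiation axioms; the latter are always applied to terms already shown to denote, and everything else is routine propositional and quantifier manipulation.

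For the first assertion, write \(\vec\sigma\simeq\vec\tau\) for \(\bigwedge_i(\sigma_i\simeq\tau_i)\) and assume it; by the disjunction rule it suffices to derive \(\f f(\vec\sigma)\=\f f(\vec\tau)\) separately from \(\f f(\vec\sigma)\den\) and from \(\f f(\vec\tau)\den\). Assuming \(\f f(\vec\sigma)\den\), strictness yields \(\sigma_i\den\) for every \(i\), so each hypothesis \(\sigma_i\simeq\tau_i\) gives \(\sigma_i\=\tau_i\) and strictness of \(\=\) then gives \(\tau_i\den\); since all the \(\sigma_i,\tau_i\) denote, the congruence axiom \(\forall\vec x\vec y(\vec x\=\vec y\land\f f(\vec x)\den\limp\f f(\vec x)\=\f f(\vec y))\) instantiates to \(\f f(\vec\sigma)\=\f f(\vec\tau)\); the other disjunct is symmetric (using also symmetry of \(\=\)), and the nullary case is immediate from \(c\den\). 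The same computation shows \(\sigma\simeq\tau\) and \(\tau\simeq\sigma\) are interderivable. From this I get, by induction on a term \(\rho\), that \(\prf\sigma\simeq\tau\limp\rho(x/\sigma)\simeq\rho(x/\tau)\): if \(\rho=x\) this is the hypothesis; if \(\rho\) is another variable or a constant, both instances are \(\rho\) and \(\rho\simeq\rho\) holds trivially; and if \(\rho=\f f(\rho_1',\dots,\rho_k')\), the induction hypothesis and the just-proved claim for \(\f f\) give \(\rho(x/\sigma)\simeq\rho(x/\tau)\).

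For the second assertion I argue by induction on \(\phi\), choosing in the quantifier cases (permissibly, since formulae are taken up to \(\alpha\)-equivalence) the bound variable distinct from \(x\) and not free in \(\sigma,\tau\). If \(\phi=R(\vec\rho)\), assume \(\sigma\simeq\tau\) and \(R(\vec\rho(x/\tau))\); strictness of \(R\) gives each \(\rho_i(x/\tau)\den\), the term step just established then gives \(\rho_i(x/\tau)\=\rho_i(x/\sigma)\) together with \(\rho_i(x/\sigma)\den\), and the congruence axiom for \(R\) applies to yield \(R(\vec\rho(x/\sigma))\); the case \(\phi=\falsum\) is trivial, and the cases \(\phi=\psi_1\land\psi_2\) and \(\phi=\psi_1\lor\psi_2\) follow propositionally from the two induction hypotheses. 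For \(\phi=\psi_1\limp\psi_2\), use that \(\simeq\) is symmetric: from \(\psi_1(x/\sigma)\) and \(\tau\simeq\sigma\) the induction hypothesis for \(\psi_1\) gives \(\psi_1(x/\tau)\), hence \(\psi_2(x/\tau)\), and the induction hypothesis for \(\psi_2\) gives \(\psi_2(x/\sigma)\). For \(\phi=\forall y\,\psi\), under \(\sigma\simeq\tau\) and \(\forall y\,\psi(x/\tau)\) instantiate at \(y\) (which denotes), apply the induction hypothesis to obtain \(\psi(x/\sigma)\), and generalise — legitimate because \(y\notin\FV(\sigma\simeq\tau)\) — to get \(\forall y\,\psi(x/\sigma)\); the case \(\phi=\exists y\,\psi\) is dual, chaining the induction hypothesis with existential introduction and then applying existential generalisation, whose side condition holds since \(y\) is not free in \(\sigma,\tau\).

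The one place needing care — and the main, if modest, obstacle — is the weakness of \(\simeq\): it delivers a genuine equation only once one side is known to denote, so each application of an instantiation or congruence axiom must be preceded by extracting the relevant \(\den\)-facts from the strictness axioms, which is exactly what makes the atomic and quantifier cases close; the implication case in addition forces one to observe at the outset that \(\simeq\) is symmetric.
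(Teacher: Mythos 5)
Your proof is correct. The paper itself omits the proof of this lemma entirely (it defers to van den Berg and van Slooten), and your argument is the standard one for the logic of partial terms: induction on terms and then on formulae, with the two genuine subtleties — extracting the relevant $\tau\den$ facts from the strictness axioms before each use of instantiation or congruence, and invoking the symmetry of $\simeq$ so the induction hypothesis can be applied with the roles of $\sigma$ and $\tau$ reversed in the antecedent of an implication — both correctly identified and handled.
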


\begin{Def}[$\HAP$\label{def:HAP}]
  Heyting Arithmetic with partial terms, $\HAP$, is the $\LPT$-theory in the language $\LHAP$. It is axiomatised by the following formulae for all variables $x$, $y$ and $z$ and formulae $\varphi \in \LHAP$. Recall, juxtaposition of terms abbreviates use of $\app$ (read as application).
  \begin{enumerate}
  \item[\refstepcounter{enumi}\HAP\arabic{enumi}.\label{HAP:arden}] $\fS(x)\den$, $(x + y)\den$, $(x \times y)\den$
  \item[\stepcounter{enumi}\HAP\arabic{enumi}.\label{HAP:arS}] $\fS(x) \= \fS(y) \limp x \= y$, $\0 \nonequality \fS(x)$
  \item[\stepcounter{enumi}\HAP\arabic{enumi}.\label{HAP:arA}] $x + \0 \= x$, $x + \fS(y) \= \fS(x + y)$
  \item[\stepcounter{enumi}\HAP\arabic{enumi}.\label{HAP:arM}] $x \times \0 \= \0$, $x \times \fS(y) \= (x \times y) + y$
  \item[\refstepcounter{enumi}\HAP\arabic{enumi}.\label{HAP:co}] $\cok x y \= x$, $(\cos x y)\den$, $\cos x y z \simeq x z (y z)$
  \item[\refstepcounter{enumi}\HAP\arabic{enumi}.\label{HAP:proj}] $\copl x \den$, $\copr x \den$, $\copl (\cop x y) \= x$, $\copr (\cop x y) \= y$, $\cop (\copl x) (\copr x) \= x$
  \item[\refstepcounter{enumi}\HAP\arabic{enumi}.\label{HAP:rec}] $\cosucc x \= \fS(x)$, $\cor x y \0 \= x$, $\cor x y \fS(z) \= y z (\cor x y z)$
  \item[\refstepcounter{enumi}\HAP\arabic{enumi}.\label{HAP:ind}] $\varphi(x/\0) \limp (\forall x (\varphi \limp \varphi(x/\fS(x))) \limp \forall x \varphi)$
  \end{enumerate}
  Terms and formulae from the fragment $\0,\fS,+,\times$ will be called arithmetical. If $\L$ is a language extending $\LHAP$ we will extend $\HAP$ to an $\L$-theory $\HAP(\L)$ by letting the induction schema {\color{darkred}$\HAP$}\ref{HAP:ind} apply to all formulae of $\L$.
\end{Def}

Not surprisingly $\HA$ directly embeds into $\HAP$. The next lemma summarises the key observations for this result.

\begin{Lma}\label{lma:HA->HAP}
  Let $\L_1 \subseteq \L_2$ be arbitrary first-order languages and $\Gamma_1$, $\Gamma_2$ sets of formulae in the respective language.
  Let $\prf_{\mathrm{I}}$ denote derivability in ordinary intuitionistic logic, as captured by the Hilbert-style system $\LPT$ plus the extra logical axiom $\tau \den$ for all terms $\tau$.
  Suppose $\Gamma_2 \prf \sigma \den$ for all terms $\sigma$ of $\L_1$. If $\Gamma_2 \prf \gamma$ for all $\gamma \in \Gamma_1$ and $\Gamma_1 \prf_{\mathrm{I}} \varphi$ then $\Gamma_2 \prf \varphi$, for all $\varphi \in \L_1$.
\end{Lma}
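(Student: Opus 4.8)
The plan is to prove this by induction on the length of the intuitionistic derivation $\Gamma_1 \prf_{\mathrm{I}} \varphi$, showing that each step can be simulated in $\LPT$ over $\Gamma_2$. The key point is that $\LPT$ and ordinary intuitionistic logic differ only in how quantifier instantiation and the strictness axioms handle undefined terms: $\prf_{\mathrm{I}}$ has the extra axiom $\tau\den$ for \emph{all} terms, which turns the $\LPT$ instantiation axioms {\color{darkred}$\LPT$}\ref{ax:All} and {\color{darkred}$\LPT$}\ref{ax:Ex} (and the definedness side-conditions in various other axioms) into their usual unconditional forms. So the only gap to bridge when we pass from $\prf_{\mathrm{I}}$ to $\prf$ is the extra hypothesis `$\sigma\den$', which by assumption is available from $\Gamma_2$ for every term $\sigma$ of $\L_1$, and $\varphi$ together with the whole derivation lives in $\L_1$.

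More precisely, I would first record the trivial observations that every rule of $\LPT$ (MP, transitivity, the generalisation rules {\color{darkred}$\LPT$}\ref{rule:All}, {\color{darkred}$\LPT$}\ref{rule:Ex}, etc.) is also a rule of $\prf_{\mathrm{I}}$ and of $\prf$, and that the purely propositional and equality axioms {\color{darkred}$\LPT$}\ref{ax:Id}--{\color{darkred}$\LPT$}\ref{ax:eqRel}, {\color{darkred}$\LPT$}\ref{ax:Con}--{\color{darkred}$\LPT$}\ref{ax:EFQ}, {\color{darkred}$\LPT$}\ref{ax:eqSym} are common to both systems. Then I would go through the axioms that genuinely use definedness. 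For {\color{darkred}$\LPT$}\ref{ax:All}, an $\prf_{\mathrm{I}}$-instance $\forall x\,\phi \limp \phi(x/\tau)$ with $\tau \in \L_1$ is obtained in $\prf$ from the $\LPT$-axiom $\forall x\,\phi \land \tau\den \limp \phi(x/\tau)$ together with $\Gamma_2 \prf \tau\den$; similarly for {\color{darkred}$\LPT$}\ref{ax:Ex}. The strictness axioms {\color{darkred}$\LPT$}\ref{ax:StFunc}, {\color{darkred}$\LPT$}\ref{ax:StVar}, {\color{darkred}$\LPT$}\ref{ax:StC}, {\color{darkred}$\LPT$}\ref{ax:eqFunc}, {\color{darkred}$\LPT$}\ref{ax:StRel} and the definedness axioms for arithmetical functions cause no trouble: they are either already $\LPT$-axioms or are trivially derivable in $\LPT$. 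The extra axiom of $\prf_{\mathrm{I}}$, namely $\tau\den$ for $\tau \in \L_1$, is exactly what the hypothesis $\Gamma_2 \prf \sigma\den$ (for $\sigma \in \L_1$) supplies. Finally, each $\gamma \in \Gamma_1$ used as an additional axiom in the $\prf_{\mathrm{I}}$-derivation is available on the $\prf$-side because $\Gamma_2 \prf \gamma$ by assumption. Assembling these, every line of the $\prf_{\mathrm{I}}$-derivation of $\varphi$ can be replayed as a line (or short block of lines) of a $\prf$-derivation from $\Gamma_2$, giving $\Gamma_2 \prf \varphi$.

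One subtlety I would be careful about: all terms appearing in the $\prf_{\mathrm{I}}$-derivation of $\varphi$ need to be terms of $\L_1$ for the hypothesis $\Gamma_2 \prf \sigma\den$ to apply to them. This is automatic if we take the derivation to be an $\L_1$-derivation — every formula in it, hence every subterm, belongs to $\L_1$ — which we may, since $\Gamma_1$ and $\varphi$ are in $\L_1$ and intuitionistic logic is closed under this (no rule or axiom forces the introduction of a symbol outside the ambient language). So the remark after Definition~\ref{def:prf} that $\LPT$ satisfies the deduction theorem is not even needed; a direct induction suffices. The main (and only real) obstacle is thus the bookkeeping of matching up the $\LPT$ axiom list with the intuitionistic one and checking that the definedness side-conditions are the sole discrepancy — a routine but slightly tedious case analysis rather than a conceptual difficulty.
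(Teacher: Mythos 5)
Your proof is correct and follows the same route as the paper, whose entire proof is the one-line ``Induction on derivations $\Gamma_1 \prf_{\mathrm{I}} \varphi$''; you have simply filled in the routine case analysis. (One small simplification: since $\prf_{\mathrm{I}}$ is \emph{defined} as $\LPT$ plus the axiom $\tau\den$, every axiom and rule of $\prf_{\mathrm{I}}$ other than $\tau\den$ is literally one of $\LPT$, so the only cases genuinely needing the hypotheses are the axioms $\tau\den$ and the members of $\Gamma_1$.)
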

\begin{proof}
  Induction on derivations $\Gamma_1 \prf_{\mathrm{I}} \varphi$.
\end{proof}

\begin{Cor}\label{cor:HAP|-HA}
  $\HAP$ proves all theorems of $\HA$.
\end{Cor}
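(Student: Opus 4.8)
The plan is to apply Lemma~\ref{lma:HA->HAP} with $\L_1 = \LHA$, $\L_2 = \LHAP$, $\Gamma_1$ the axioms of $\HA$, and $\Gamma_2$ the axioms of $\HAP$. Since $\HA$ is formulated in ordinary intuitionistic predicate logic (with the usual unrestricted quantifier axioms), derivability in $\HA$ is exactly $\Gamma_1 \prf_{\mathrm{I}} \varphi$ in the sense of the lemma, so it suffices to check the two hypotheses: that $\HAP$ proves $\sigma\den$ for every $\LHA$-term $\sigma$, and that $\HAP$ proves every axiom of $\HA$.

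For the first hypothesis, I would argue by induction on the structure of $\LHA$-terms $\sigma$. If $\sigma$ is a variable, $\sigma\den$ is {\color{darkred}$\LPT$}\ref{ax:StVar}; if $\sigma$ is the constant $\0$, it is {\color{darkred}$\LPT$}\ref{ax:StC}. For the inductive step, the only function symbols of $\LHA$ are $\fS$, $+$ and $\times$, and {\color{darkred}$\HAP$}\ref{HAP:arden} asserts precisely $\fS(x)\den$, $(x+y)\den$ and $(x \times y)\den$ for variables; using {\color{darkred}$\LPT$}\ref{ax:eqFunc} (or more directly the Leibniz Lemma~\ref{lma:Leibniz}) together with the induction hypothesis that the immediate subterms denote, one concludes $\sigma\den$. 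A clean way to phrase this is: from $\vec\tau\den$ and $f(\vec x)\den$ one gets $f(\vec\tau)\den$ by instantiation, since a denoting term may be substituted for a variable.

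For the second hypothesis, I would go through the axioms of $\HA$ one at a time. The successor axioms, the defining equations for $+$ and $\times$, and the induction schema over $\LHA$-formulae are literally among {\color{darkred}$\HAP$}\ref{HAP:arS}--{\color{darkred}$\HAP$}\ref{HAP:ind} (induction being {\color{darkred}$\HAP$}\ref{HAP:ind} specialised to arithmetical $\varphi$), and the equality axioms of $\HA$ are instances of the $\LPT$ equality axioms {\color{darkred}$\LPT$}\ref{ax:eqRef}--{\color{darkred}$\LPT$}\ref{ax:eqRel} and {\color{darkred}$\LPT$}\ref{ax:eqSym}--{\color{darkred}$\LPT$}\ref{ax:StRel}; one should double-check whatever specific axiomatisation of $\HA$ is in force, but in all standard presentations each axiom either appears verbatim in $\HAP$ or follows immediately. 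With both hypotheses verified, Lemma~\ref{lma:HA->HAP} yields $\HAP \prf \varphi$ whenever $\HA \vdash \varphi$.

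The only genuine subtlety — and the point worth stating carefully rather than a real obstacle — is the interface between the two logics: $\HA$'s logic has $\tau\den$ as a logical axiom for \emph{all} terms, whereas $\HAP$ does not, so the reduction genuinely depends on the fact that every \emph{$\LHA$}-term denotes provably in $\HAP$. This is exactly why Lemma~\ref{lma:HA->HAP} is phrased with the side condition $\Gamma_2 \prf \sigma\den$ for all $\L_1$-terms $\sigma$, and it is why the corollary is about $\HA$ (whose language has only the totally-defined arithmetical operations) rather than about arbitrary $\LHAP$-reasoning. Everything else is bookkeeping.
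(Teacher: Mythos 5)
Your proposal is correct and is exactly the argument the paper intends: the corollary is stated as an immediate consequence of Lemma~\ref{lma:HA->HAP}, instantiated just as you describe, with the totality of $\LHA$-terms checked by induction from {\color{darkred}$\LPT$}\ref{ax:StVar}, {\color{darkred}$\LPT$}\ref{ax:StC} and {\color{darkred}$\HAP$}\ref{HAP:arden}, and the $\HA$-axioms read off from {\color{darkred}$\HAP$}\ref{HAP:arS}--{\color{darkred}$\HAP$}\ref{HAP:ind}. Your closing remark about why the side condition $\Gamma_2 \prf \sigma\den$ is the real content of the reduction is precisely the point the paper's setup is designed around.
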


\subsection[Combinatory algebra in $\HAP$]{Combinatory algebra in $\bHAP$}
\label{sec:coHAP}

This subsection aims to describe how the basic facts of combinatory algebra are derivable in $\HAP$. In this, we follow \cite[VI.2.]{Beeson:1985}. The basic idea is to regard every object of the theory both as a number and as a partial function, where $\app$ denotes function application to an argument. It is in line with this reading we will write $\tau \vec{\sigma}$ and $\tau \app \vec{\sigma}$ for $\tau \app \sigma_0 \app \dotsb \app \sigma_n$, which in turn is a shorthand for $((\dotsb(\tau \app \sigma_0) \app \dotsb ) \app \sigma_n)$; thus association of $\app$ is to the left.

It is well known that combinatory algebra is Turing-complete, and thus defines the same class of functions as the recursive functions or the $\lambda$-calculus. Adapted to present purposes we observe that we can define a denoting `$\lambda$-term' for any partial term in the language, such that if applied to an argument they are weakly equal.

\begin{Prp}[$\lambda$-terms, Proposition 2.3 of \cite{vandenBerg_vanSlooten:2018}\label{prp:lambda}]
  For each term $\tau$ and variable $x$ in $\LHAP$, there is a term $\lambda x.\tau$ in the fragment without $\fS$, $+$ and $\times$ such that $\FV(\lambda x.\tau) = \FV(\tau) \setminus \{x\}$, $\HAP \prf \lambda x.\tau \den$ and $\HAP \prf \sigma \den \limp (\lambda x. \tau) \app \sigma \simeq \tau[x/\sigma]$.
\end{Prp}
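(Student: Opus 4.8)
The plan is to construct $\lambda x.\tau$ by the standard bracket-abstraction recursion on the structure of $\tau$, using the combinators $\cok$ and $\cos$ to internalise the dependence on $x$, and then to verify the three claimed properties by induction on that recursion.

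First I would give the definition. Set $\lambda x.x := \cos\,\cok\,\cok$ (a closed term denoting by {\color{darkred}$\HAP$}\ref{HAP:co} and {\color{darkred}$\LPT$}\ref{ax:StC}); set $\lambda x.\eta := \cok\,\eta$ whenever $\eta$ is a variable distinct from $x$ or a constant; and set $\lambda x.(\sigma\app\rho) := \cos\,(\lambda x.\sigma)\,(\lambda x.\rho)$. The only subtlety is that $\LHAP$ also contains the arithmetical function symbols $\fS$, $+$, $\times$, which are not applicative; here I would first rewrite $\tau$ using fresh constants — or rather observe that the statement only needs a term in the fragment \emph{without} $\fS,+,\times$, so I must eliminate these symbols. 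The clean way is to note that by {\color{darkred}$\HAP$}\ref{HAP:rec} the terms $\cosucc$, and terms $\coplus$, $\comult$ built from $\cor$ (as defined in the surrounding macros) satisfy $\cosucc\app\sigma \simeq \fS(\sigma)$, $\coplus\app\sigma\app\rho \simeq \sigma+\rho$, $\comult\app\sigma\app\rho\simeq\sigma\times\rho$ provably in $\HAP$; so replacing each subterm $\fS(\cdot)$, $(\cdot+\cdot)$, $(\cdot\times\cdot)$ by the corresponding application of $\cosucc,\coplus,\comult$ yields a term $\tau'$ in the applicative fragment with $\HAP\prf \sigma\den\limp \tau[x/\sigma]\simeq\tau'[x/\sigma]$ and $\HAP \prf \tau'\den \liff \tau\den$, and then I apply bracket abstraction to $\tau'$.

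The three properties are then proved by induction on the construction of $\lambda x.\tau'$. The free-variable claim $\FV(\lambda x.\tau) = \FV(\tau)\setminus\{x\}$ is a direct syntactic induction (the combinators are closed, and $\cok\,\eta$, $\cos\,A\,B$ have exactly the free variables of their components). Totality $\HAP\prf\lambda x.\tau\den$: in the base cases this is {\color{darkred}$\HAP$}\ref{HAP:co}/{\color{darkred}$\LPT$}\ref{ax:StC} and strictness, and in the application case $\cos\,A\,B$ denotes because $\cos A\den$ is an axiom ({\color{darkred}$\HAP$}\ref{HAP:co}, second conjunct, after one instantiation) and then $\cos A B\den$ follows since $\cos A$ is a total function applied to the denoting term $B$ — using {\color{darkred}$\LPT$}\ref{ax:StFunc} together with the $\cos$-axiom, or more directly the fact that $\cos A B \den$ is provable from $(\cos x y)\den$ by instantiation. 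The weak-equality claim $\HAP\prf \sigma\den\limp(\lambda x.\tau)\app\sigma\simeq\tau[x/\sigma]$ is the heart of it: for $x$ itself, $(\cos\cok\cok)\app\sigma\simeq \cok\sigma(\cok\sigma)\simeq\sigma$ using {\color{darkred}$\HAP$}\ref{HAP:co} and the Leibniz Lemma \ref{lma:Leibniz} to manage weak equalities; for a variable $y\ne x$ or a constant, $(\cok\,\eta)\app\sigma\simeq\eta$ by the $\cok$-axiom and $\sigma\den$; for an application $\sigma_1\app\rho_1$, we have $(\cos\,(\lambda x.\sigma_1)\,(\lambda x.\rho_1))\app\sigma \simeq (\lambda x.\sigma_1)\app\sigma\,\app\,((\lambda x.\rho_1)\app\sigma)$ by the third $\cos$-axiom (whose hypothesis $\lambda x.\sigma_1\den$, $\lambda x.\rho_1\den$, $\sigma\den$ we have), and then the induction hypotheses rewrite the two sides to $\sigma_1[x/\sigma]$ and $\rho_1[x/\sigma]$, again invoking Lemma \ref{lma:Leibniz} to substitute under $\app$ through weak equalities.

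I expect the main obstacle to be bookkeeping with \emph{weak} rather than strict equality: the $\cos$-reduction axiom is stated with $\simeq$, not $\=$, so at each application step one cannot freely substitute equals-for-equals but must route everything through Lemma~\ref{lma:Leibniz} ($\sigma\simeq\tau\land\phi(x/\tau)\limp\phi(x/\sigma)$ and its congruence form for $\f{f}$), and one must track precisely which subterms are provably denoting so that the hypotheses of the instantiation axioms {\color{darkred}$\LPT$}\ref{ax:All}/{\color{darkred}$\LPT$}\ref{ax:Ex} and of the $\cos$-axiom are discharged. The elimination of $\fS,+,\times$ is conceptually trivial but needs to be stated carefully so that the resulting term genuinely lies in the claimed fragment while the weak equality to the original is preserved under the substitution $[x/\sigma]$ for denoting $\sigma$.
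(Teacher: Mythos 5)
Your proposal is correct and is essentially the standard bracket-abstraction argument that the paper defers to (Proposition 2.3 of van den Berg--van Slooten, following Beeson VI.2): recursion on the applicative structure with $\cok$ and $\cos$, prior elimination of $\fS,+,\times$ via $\cosucc$ and recursor-defined combinators, and verification of totality and the weak $\beta$-equation through the strictness axioms and the Leibniz Lemma. No gaps beyond the bookkeeping you already flag.
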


As stated above we will call terms constructed in accordance with this lemma \emph{$\lambda$-terms}.
At first sight they might seem rather redundant, but note that a $\lambda$-term \emph{always} denotes, as opposed to the term it was constructed from (though the application of a $\lambda$-term to some argument need not denote, of course). Note that this also means that $\tau$ is \emph{not} a subterm of $\lambda x.\tau$ in general. Another consequence of the construction is that $\lambda$-terms do not contain any arithmetical function symbols, but only $\app$. We will use the usual notational conventions for stacking $\lambda$s, that is $\lambda x.\lambda y. \tau$ will be written $\lambda x y.\tau$ etc.

\begin{Prp}[Primitive recursion in $\HAP$\label{prp:PRinHAP}]
  For each function symbol of $\LPRA$ defined as a primitive recursive function there is a closed combinator term in the fragment of $\LHAP$ without $\fS$, $+$ and $\times$ such that $\HAP$ proves its defining equations. More precisely there is an interpretation $\tau$ of these function symbols of $\LPRA$ into the terms of $\LHAP$ such that for all such $\f{f}$, $\f{g}$, and $\f{h}_{(j)}$ with arities $n$, $k + 2$ and $k$
  \begin{align*}
    \HAP &\vdash \forall x\; \tau_{\fZ} x \= \f0\text,\\
    \HAP &\vdash \forall x\; \tau_{\fS} x \= \fS(x)\text,\\
    \HAP &\vdash \forall \vec{x}\; \tau_{\f{\pi}^k_i} \vec{x} \= x_i\text,\\
    \HAP &\vdash \forall \vec{x}\; \tau_{\circ^k_n(\f{f},\f{h}_0,\dotsc,\f{h}_{n - 1})} \vec{x} \= \tau_{\f{f}} (\tau_{\f{h}_0} \vec{x}) \dotsb (\tau_{\f{h}_{n - 1}} \vec{x})\text,\\
    \HAP &\vdash \forall \vec{x}\; \tau_{\rec^k(\f{g},\f{h})} \vec{x}\, \f0 \= \tau_{\f{h}} \vec{x}\text,\\
    \HAP &\vdash \forall \vec{x}\, y\; \tau_{\rec^k(\f{g},\f{h})} \vec{x} (\fS(y)) \= \tau_{\f{g}} \vec{x} y (\tau_{\rec^k(\f{g},\f{h})} \vec{x} y)\text,
    \intertext{(where the length of $\vec{x}$ is $k$) and consequently}
    \HAP &\vdash \tau_{\f{f}} \num{a}_0 \dotsb \num{a}_{n - 1} \= \num{\val(\f{f},\vec{a})}
  \end{align*}
  for every $a_0,\dotsc,a_{n - 1} \in \N$. In particular $\HAP \vdash \tau_{\f{f}}\den$ for every such $\f{f}$.
  \begin{proof}
    The proof is standard. See e.g.\ \cite[Theorem 1.2.4]{vanSlooten:2014} and \cite[Theorem 9.3.11]{Troelstra_vanDalen:1988b}.
  \end{proof}
\end{Prp}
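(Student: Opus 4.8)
The plan is to define the interpretation $\tau$ by recursion on the primitive-recursive definition of $\f{f}$ — that is, on the build-up from the basic functions $\fZ$, $\fS$, $\f{\pi}^k_i$ by composition $\circ^k_n$ and primitive recursion $\rec^k$ — and to verify the displayed equations by induction on the same build-up, carrying along as an auxiliary claim (\emph{totality}) that $\HAP \vdash \forall \vec{x}\,(\tau_{\f{f}}\,\vec{x})\den$. The witnessing combinators are the expected ones: $\tau_{\fZ} := \lambda x.\0$; $\tau_{\fS} := \cosucc$; $\tau_{\f{\pi}^k_i} := \lambda x_0 \dotsc x_{k-1}.\,x_i$; $\tau_{\circ^k_n(\f{f},\f{h}_0,\dotsc,\f{h}_{n-1})} := \lambda \vec{x}.\,\tau_{\f{f}}\,(\tau_{\f{h}_0}\,\vec{x})\dotsb(\tau_{\f{h}_{n-1}}\,\vec{x})$; and, the only non-routine one, $\tau_{\rec^k(\f{g},\f{h})} := \lambda \vec{x}.\,\cor\,(\tau_{\f{h}}\,\vec{x})\,(\lambda y z.\,\tau_{\f{g}}\,\vec{x}\,y\,z)$. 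By Proposition~\ref{prp:lambda} each of these is a term of $\LHAP$ in the fragment without $\fS$, $+$, $\times$ (the only arithmetical symbol appearing is the constant $\0$, while $\cosucc$ and $\cor$ are constants of $\LHAP$), and $\HAP \vdash \tau_{\f{f}}\den$ since $\tau_{\f{f}}$ is either a $\lambda$-term, which denotes by Proposition~\ref{prp:lambda}, or the constant $\cosucc$, which denotes by {\color{darkred}$\LPT$}\ref{ax:StC}.

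The base cases are immediate: $\tau_{\fS}\,x \= \fS(x)$ is {\color{darkred}$\HAP$}\ref{HAP:rec}, while Proposition~\ref{prp:lambda} instantiated at variables gives $\tau_{\fZ}\,x \simeq \0$ and $\tau_{\f{\pi}^k_i}\,\vec{x} \simeq x_i$; since both right-hand sides denote ({\color{darkred}$\LPT$}\ref{ax:StC} and {\color{darkred}$\LPT$}\ref{ax:StVar}) these weak equalities upgrade to the strong equalities claimed, and totality follows via the strictness axioms. For composition, Proposition~\ref{prp:lambda} gives $\tau_{\circ}\,\vec{x} \simeq \tau_{\f{f}}\,(\tau_{\f{h}_0}\,\vec{x})\dotsb(\tau_{\f{h}_{n-1}}\,\vec{x})$; the induction hypothesis provides $(\tau_{\f{h}_j}\,\vec{x})\den$ for each $j$ and then, applied to $\f{f}$, that the right-hand side denotes, so the weak equality becomes strong and totality of $\tau_{\circ}$ follows.

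The primitive-recursion case carries the real content. Write $a := \tau_{\f{h}}\,\vec{x}$, $b := \lambda y z.\,\tau_{\f{g}}\,\vec{x}\,y\,z$ and $s := \cor\,a\,b$, so Proposition~\ref{prp:lambda} gives $\tau_{\rec^k(\f{g},\f{h})}\,\vec{x} \simeq s$. Here $b$ is a $\lambda$-term and $a$ denotes by the induction hypothesis, and the axiom $\cor\,x\,y\,\0 \= x$ from {\color{darkred}$\HAP$}\ref{HAP:rec} together with the strictness axioms gives $\cor\,a\,b\den$; hence $\tau_{\rec}\,\vec{x} \= s$ and $\tau_{\rec}\,\vec{x}\,\0 \= s\,\0 \= a = \tau_{\f{h}}\,\vec{x}$. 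The step equation and totality are then obtained together by an induction inside $\HAP$ on $y$ (axiom {\color{darkred}$\HAP$}\ref{HAP:ind}): assuming $(\tau_{\rec}\,\vec{x}\,y)\den$, one computes, using the other clause $\cor\,x\,y\,\fS(z) \= y\,z\,(\cor\,x\,y\,z)$ of {\color{darkred}$\HAP$}\ref{HAP:rec}, two $\beta$-reductions of $b$ via Proposition~\ref{prp:lambda}, and the totality hypothesis for $\f{g}$, that $\tau_{\rec}\,\vec{x}\,\fS(y) \= b\,y\,(\cor\,a\,b\,y) \= \tau_{\f{g}}\,\vec{x}\,y\,(\tau_{\rec}\,\vec{x}\,y)$, every intermediate term denoting; this delivers both the step equation and $(\tau_{\rec}\,\vec{x}\,\fS(y))\den$. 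Finally, the evaluation identities $\tau_{\f{f}}\,\num{a}_0 \dotsb \num{a}_{n-1} \= \num{\val(\f{f},\vec{a})}$ follow by an outer meta-level induction on the build-up of $\f{f}$, with an inner induction on the recursion argument in the $\rec$ case and the numeral convention $\fS(\num{a}) = \num{a + 1}$, and $\tau_{\f{f}}\den$ was noted above. The main obstacle, I expect, is the denotation bookkeeping — keeping track of which subterms provably denote so that weak equalities $\simeq$ may be promoted to the strong equalities $\=$ asserted in the statement — together with the object-level induction required for totality of the recursor; the remainder is the standard combinatory-completeness argument, for which one may follow \cite[Theorem~1.2.4]{vanSlooten:2014} and \cite[Theorem~9.3.11]{Troelstra_vanDalen:1988b}.
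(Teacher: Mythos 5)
Your proposal is correct and is essentially the standard combinatory-completeness argument that the paper itself defers to (it cites van Slooten, Theorem 1.2.4, and Troelstra--van Dalen, Theorem 9.3.11, without giving details); your choice of combinators, the auxiliary totality claim, and the inner $\HAP$-induction on $y$ for the recursor are exactly what those references do. The care you take in promoting weak equalities $\simeq$ to strong equalities $\=$ via the strictness axioms is the right bookkeeping and is the only non-routine point.
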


Fixing $\LHA$ we can extend this interpretation compositionally to an interpretation $\mathcal{T}$ of $\LPRA$ into $\LHAP$, yielding the following result.

\begin{Prp}\label{prp:PRHA->HAP}
  Recall $\PRHA$ is Heyting Arithmetic axiomatised in $\LPRA$. For every formula $\phi$ of $\LPRA$,
  \begin{align*}
    \PRHA \prf \phi \Rightarrow \HAP \prf \mathcal{T}(\phi)
  \end{align*}
  In particular, if $\phi \in \LPRA$ is a true (in $\N$) $\Sigma_1$-sentence, then $\HAP \prf \mathcal{T}(\phi)$.
  \end{Prp}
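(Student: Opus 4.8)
The plan is to prove $\PRHA \prf \phi \Rightarrow \HAP \prf \mathcal{T}(\phi)$ by induction on $\PRHA$-derivations, using Lemma~\ref{lma:HA->HAP} as the organising principle. Set $\L_1 = \LPRA$ and $\L_2 = \LHAP$, and take $\Gamma_1$ to be the (nonlogical) axioms of $\PRHA$ and $\Gamma_2$ to be $\HAP$ itself, but throughout one works with the \emph{translated} language: since $\LPRA \not\subseteq \LHAP$ literally, the compositional translation $\mathcal{T}$ is what bridges the gap, replacing each primitive recursive function symbol $\f{f}$ by the combinator term $\tau_{\f{f}}$ of Proposition~\ref{prp:PRinHAP} and acting as the identity on logical structure. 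So strictly speaking I would restate the inductive hypothesis as: for every $\LPRA$-formula $\phi$, if $\PRHA \prf_{\mathrm{I}} \phi$ then $\HAP \prf \mathcal{T}(\phi)$, and prove it directly by induction on the derivation, citing Lemma~\ref{lma:HA->HAP} as the template for why the purely logical steps go through.

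The key steps are as follows. First, handle the logical axioms and rules: since $\mathcal{T}$ commutes with connectives and quantifiers, every intuitionistic logical axiom of $\PRHA$ translates to an intuitionistic logical axiom, which $\HAP$ proves via Corollary~\ref{cor:HAP|-HA} (or directly, noting $\LPT$ plus totality of all $\LHAP$-terms is intuitionistic logic); the one subtlety is that quantifier axioms in $\HAP$ carry a denotation side-condition $\tau\den$, so one needs that every term in the image of $\mathcal{T}$ denotes — this is exactly the ``$\Gamma_2 \prf \sigma\den$ for all terms $\sigma$'' hypothesis of Lemma~\ref{lma:HA->HAP}, and it holds because $\mathcal{T}$-images are built from $\0$, the totality axioms {\color{darkred}$\HAP$}\ref{HAP:arden}, variables ({\color{darkred}$\LPT$}\ref{ax:StVar}), and the combinator terms $\tau_{\f{f}}$, each of which $\HAP$ proves to denote by the last clause of Proposition~\ref{prp:PRinHAP}, closed under application using the defining equations. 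Second, handle the nonlogical axioms of $\PRHA$: the basic successor axioms and induction translate to {\color{darkred}$\HAP$}\ref{HAP:arS} and {\color{darkred}$\HAP$}\ref{HAP:ind} verbatim (induction for all $\LHAP$-formulae covers the translated ones), and each defining equation of a primitive recursive function symbol translates, under $\mathcal{T}$, precisely to one of the equations listed in Proposition~\ref{prp:PRinHAP}, which $\HAP$ proves. Third, modus ponens and generalisation are preserved since $\mathcal{T}$ commutes with implication and quantification (generalisation is fine as $\mathcal{T}$ does not introduce free variables beyond those already present). This closes the induction and gives the displayed implication.

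For the ``in particular'' clause: if $\phi \in \LPRA$ is a true $\Sigma_1$-sentence $\exists \vec{x}\, \psi(\vec{x})$ with $\psi$ primitive-recursively decidable (a bounded/$\Delta_0$ matrix, or more simply an equation between closed primitive recursive terms after instantiating witnesses), pick numerical witnesses $\vec{a}$ making $\psi(\num{\vec a})$ true in $\N$; then $\psi(\num{\vec a})$ is a true closed equation-like statement provable in $\PRHA$ by $\Sigma_1$-completeness (or just evaluate the primitive recursive terms), hence $\PRHA \prf \phi$, hence $\HAP \prf \mathcal{T}(\phi)$ by the first part. Here one uses the last consequence of Proposition~\ref{prp:PRinHAP}, $\HAP \vdash \tau_{\f{f}}\,\num{a}_0 \dotsb \num{a}_{n-1} \= \num{\val(\f{f},\vec a)}$, to see that $\mathcal{T}$ of a true closed equation is $\HAP$-provable by brute-force evaluation of both sides to the same numeral.

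The main obstacle I expect is purely bookkeeping rather than conceptual: making precise that $\mathcal{T}$ genuinely commutes with all logical operations including substitution — in particular that $\mathcal{T}(\varphi(x/\tau)) = \mathcal{T}(\varphi)(x/\mathcal{T}(\tau))$ so that the translated instantiation axioms {\color{darkred}$\LPT$}\ref{ax:All}, {\color{darkred}$\LPT$}\ref{ax:Ex} and the induction axiom {\color{darkred}$\HAP$}\ref{HAP:ind} line up — and verifying the uniform denotation claim for all $\mathcal{T}$-image terms (the hypothesis of Lemma~\ref{lma:HA->HAP}), which requires a small side induction on $\LPRA$-terms using totality of each $\tau_{\f{f}}$ and axiom {\color{darkred}$\LPT$}\ref{ax:StVar}. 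Neither step is hard, but the denotation side-conditions attached to the $\LPT$ quantifier axioms are the one place where a naive ``$\mathcal{T}$ preserves everything'' slogan needs genuine checking.
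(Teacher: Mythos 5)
Your proposal is correct and is essentially the paper's own argument: the paper's proof is the one-liner ``Apply Lemma~\ref{lma:HA->HAP} to $\mathcal{T}[\PRHA] \subset \HAP$,'' and your write-up simply spells out the details of that application (denotation of $\mathcal{T}$-image terms via Proposition~\ref{prp:PRinHAP}, provability of the translated nonlogical axioms, commutation of $\mathcal{T}$ with the logical structure, and $\Sigma_1$-completeness for the final clause). No substantive difference in approach.
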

  \begin{proof}
    Apply Lemma \ref{lma:HA->HAP} to $\mathcal{T}[\PRHA] \subset \HAP$.
  \end{proof}

\begin{Prp}[Recursion in HAP\label{prp:coHAP}]
  Combinatory algebra in $\HAP$ encompasses all recursive functions: there are (partial) terms $\cod$, $\cofix$ and $\comin$ such that
  \begin{align*}
    \HAP &\prf x \= y \limp \cod x y f g z \simeq f z\text,\\
    \HAP &\prf x \nonequality y \limp \cod x y f g z \simeq g z\text,\\
    \HAP &\prf \cofix f \den \land\; \cofix f x \simeq f (\cofix f) x\text,\\
    \HAP &\prf \comin f x \den \liff \exists y (f x y \= 0 \land \forall z < y (f x z \den))
  \end{align*}
  and
  \begin{align*}
    \HAP &\prf \comin f x \den \limp (f x (\comin f x) \= 0 \land \forall y < (\comin f x)(f x y > 0))\text.
  \end{align*}
\end{Prp}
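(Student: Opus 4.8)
The plan is to construct the terms $\cod$, $\cofix$ and $\comin$ explicitly from the combinators already available via Proposition~\ref{prp:PRinHAP} and the $\lambda$-term construction of Proposition~\ref{prp:lambda}, and then verify each stated property by the axioms {\color{darkred}$\HAP$}\ref{HAP:co}--{\color{darkred}$\HAP$}\ref{HAP:rec} together with the defining equations of the primitive recursive combinators. I shall treat the three functions in turn. For the definition-by-cases combinator $\cod$, I would use a primitive recursive term $\f{eq}$ with $\tau_{\f{eq}} x y$ equal to (a code for) $0$ when $x = y$ and to $1$ otherwise; such an $\f{eq}$ is primitive recursive, so Proposition~\ref{prp:PRinHAP} supplies a denoting closed term, and then set $\cod := \lambda x y f g z.\, \cor\, (g z)\, (\lambda u w.\, f z)\, (\tau_{\f{eq}} x y)$ (using $\cor$ from {\color{darkred}$\HAP$}\ref{HAP:rec} to branch on whether the code is $\0$ or a successor). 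Unfolding the $\lambda$-term on denoting arguments via Proposition~\ref{prp:lambda} and then applying the recursor equations gives $\cod x y f g z \simeq g z$ when $x=y$ and $\simeq f z$ when $x \ne y$; the weak-equality bookkeeping (that neither side denotes more than the other) is handled uniformly by Lemma~\ref{lma:Leibniz} and the strictness axioms.

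For the fixpoint combinator $\cofix$, this is the standard Curry/Turing construction carried out inside combinatory algebra: put $\f{w} := \lambda f x.\, f (x x) $-style term, more precisely let $\cofix := \lambda f.\, (\lambda x.\, f (\lambda y.\, x x y))\, (\lambda x.\, f (\lambda y.\, x x y))$. Each $\lambda$-term denotes by Proposition~\ref{prp:lambda}, so $\cofix f \den$ is immediate; and unfolding the outer application once (the argument $\lambda x.\ldots$ denotes, so {\color{darkred}$\HAP$}\ref{HAP:co} / Proposition~\ref{prp:lambda} applies) yields $\cofix f \simeq f(\lambda y.\, \cofix f\, y)$, whence $\cofix f x \simeq f (\cofix f) x$ after one more application, using that $\lambda y.\cofix f\, y$ and $\cofix f$ are weakly equal as functions when applied to a denoting argument. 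The mild subtlety here, as always with fixpoints over a \emph{partial} combinatory algebra, is that one only gets the equation in the form with the extra argument $x$ present (and up to $\simeq$), which is exactly what the statement asks for, so no more is needed.

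For the unbounded search operator $\comin$, the natural route is to define it \emph{via} $\cofix$: let $\f{h}$ be the term such that $\f{h} s f x y \simeq \cod\, (f x y)\, 0\, (\lambda u.\, s f x (\fS(y)))\, (\lambda u.\, y)\, 0$ — i.e. ``if $f x y = 0$ return $y$, else recurse on $\fS(y)$'' — built with $\cod$ and $\lambda$-terms, and set $\comin := \lambda f x.\, \cofix\, (\lambda s f' x'.\, \f{h}\, s\, f'\, x')\, f\, x\, \0$. The two biconditionals are then proved by induction (the schema {\color{darkred}$\HAP$}\ref{HAP:ind}, extended to $\LHAP$-formulae): the ``$\Leftarrow$'' direction of the first, and the second displayed formula, go by induction on (a bound for) the witness $y$, peeling off one step of the $\cofix$-unfolding at a time and using the $\cod$-equations to see which branch is taken; the ``$\Rightarrow$'' direction is the dual, showing that if the search-from-$y$ term denotes then there is a least witness $\ge y$ below which $f x z$ denotes. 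I expect this last part — getting the search operator's behaviour exactly right, including the clause $\forall z < y\,(f x z\den)$ and the weak-equality side conditions in the presence of partial $f$ — to be the main obstacle; the reference to \cite[VI.2]{Beeson:1985} and \cite[Theorem 1.2.4]{vanSlooten:2014} indicates the bookkeeping is routine but genuinely fiddly, so in the write-up I would state the definitions precisely and then say ``the verification is by a straightforward induction, cf. \cite{Beeson:1985}'' rather than grind through it.
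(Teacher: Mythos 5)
Your overall strategy --- explicit construction of the three terms from the recursor and the $\lambda$-terms of Proposition~\ref{prp:lambda}, followed by verification from the axioms --- is exactly what the paper intends; its own ``proof'' is only a pointer to Beeson, van Slooten and Troelstra--van Dalen, where such constructions are carried out. However, two of your three terms do not satisfy the stated properties, and in both cases the failure is on precisely the point that makes the \emph{partial} setting delicate. First, your $\cod$ places $g z$ bare as the base case of the recursor. By the strictness axiom {\color{darkred}$\LPT$}\ref{ax:StFunc} applied to $\app$, the term $\cor \app (g z) \app \dotsb$ can only denote if $g z$ denotes, so when $g z$ is undefined your $\cod\, x\, y\, f\, g\, z$ is undefined even if the selected branch $f z$ denotes --- contradicting the required weak equality, which demands laziness in the unselected branch. (You have also swapped the two branches relative to the statement: your $\f{eq}$ returns $\0$ on equality and $\cor\, a\, b\, \0 \= a$ then selects the $g z$ branch, whereas the proposition requires $f z$ when $x \= y$.) The repair is to make the case distinction between the \emph{functions} $f$ and $g$ themselves, which always denote since they are variables, and only afterwards apply the selected function to $z$, e.g.\ $\cod\, x\, y\, f\, g\, z := \cor\, f\, (\lambda u v.\, g)\, (\tau_{\f{eq}}\, x\, y)\, z$ with $\f{eq}$ oriented correctly. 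Second, your $\cofix f$ unfolds by Proposition~\ref{prp:lambda} to $f(\lambda y.\, W W y)$, which is $f$ applied to an argument; nothing forces this to denote for an arbitrary $f$, so $\HAP \prf \cofix f \den$ fails. Your claim that it is ``immediate'' because $\lambda$-terms denote confuses the $\lambda$-term $\cofix$ with the application $\cofix \app f$. The standard remedy is to put the guard around the whole body: with $W := \lambda x y.\, f (x x)\, y$ one gets $\cofix f \simeq W W \simeq \lambda y.\, f(W W)\, y$, which \emph{is} a $\lambda$-term and hence denotes, and then $\cofix f\, x \simeq f (\cofix f)\, x$ as required.

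Your reduction of $\comin$ to $\cofix$ and $\cod$ is the standard route and is fine in outline once the two terms above are repaired, and you are right that verifying the biconditional --- in particular the clause $\forall z < y\, (f x z \den)$ and the left-to-right direction --- is the genuinely laborious part; the paper, too, delegates exactly this bookkeeping to the cited references.
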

\begin{proof}
  Again standard. See \cite[Proposition 1.2.1.]{vanSlooten:2014}, \cite[VI.2.6--8]{Beeson:1985} and \cite[9.3.7--11]{Troelstra_vanDalen:1988b}.
\end{proof}

The above thus embeds $\LPRA$ (and more) into $\LHAP$. It will, however, be of vital importance that $\LHA \subset \LPRA$ is embedded in $\LHAP$ in a way that does not involve $\app$ and the combinators; recall that $\LHA \subset \LHAP$ \emph{by definition} as the arithmetical formulae of $\LHAP$, which are not affected by $\mathcal{T}$.

By interpreting $\app$ as Kleene application, we also have a result in the converse direction.

\begin{Prp}\label{prp:HAP->HA}
  There is an interpretation $\mathcal{F}$ of $\LHAP$ in $\LHA$ which interprets $\app$ as Kleene application, such that
  \begin{enumerate}
  \item $\HA \prf \phi \liff \mathcal{F}(\phi)$ for all $\phi \in \LHA$,\label{HA->HAP:1}
  \item $\HAP \prf \phi \Rightarrow \HA \prf \mathcal{F}(\phi)$ for all $\phi \in \LHAP$.
  \end{enumerate}
\end{Prp}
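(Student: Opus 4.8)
The plan is to build the interpretation $\mathcal{F}$ by the standard device of reading $\app$ as Kleene application in $\HA$ and checking that every axiom of $\HAP$ is sent to a theorem of $\HA$. Fix a $\Sigma_1$ formula $\{e\}(a)\simeq b$ in $\LHA$ expressing that the $e$-th partial recursive function on input $a$ converges with value $b$ (via Kleene's $\T$-predicate), together with the associated partial-term conventions $\{e\}(a)\den$ abbreviating $\exists b\,(\{e\}(a)\simeq b)$. Since $\HAP$-terms may be non-denoting, $\mathcal{F}$ cannot be a plain homomorphic translation of terms: instead I would define $\mathcal{F}$ by first Skolemising terms away, i.e.\ translating each $\LHAP$-formula $\phi$ into an $\LHA$-formula $\mathcal{F}(\phi)$ by recursion on $\phi$, introducing for every compound subterm $\tau$ a fresh existentially (in positive position) or universally (in negative position) quantified variable constrained to be a value of $\tau$ — the usual ``unnesting'' translation for logics of partial terms. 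Concretely, $\mathcal{F}$ of an atomic formula $R(\vec\tau)$ becomes $\exists \vec u\,(\bigwedge_i \val_{\tau_i}(u_i)\land R(\vec u))$ where $\val_\tau$ is defined by recursion: $\val_x(u)$ is $u\=x$, $\val_{c}(u)$ is $u\=\num{c^{\mathcal F}}$ for the chosen code of each constant $c$ (so $\cok\mapsto$ code of the $K$-combinator, etc.), $\val_{\fS(\sigma)}(u)$ is $\exists v\,(\val_\sigma(v)\land u\=\fS(v))$, similarly for $+,\times$, and $\val_{\sigma\app\rho}(u)$ is $\exists v\,\exists w\,(\val_\sigma(v)\land \val_\rho(w)\land \{v\}(w)\simeq u)$; then $\tau\den$ translates to $\exists u\,\val_\tau(u)$, and the connectives and quantifiers commute with $\mathcal{F}$.

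With $\mathcal{F}$ so defined, part~\ref{HA->HAP:1} is almost immediate: if $\phi\in\LHA$ then all its terms are arithmetical and total, so $\HA$ proves $\forall u\,(\val_\tau(u)\liff u\=\tau)$ for each arithmetical $\tau$ by an easy induction on $\tau$ (using $\HAP$-free totality of $\0,\fS,+,\times$ in $\HA$), whence $\HA\vdash\phi\liff\mathcal{F}(\phi)$ by induction on $\phi$. For part~2 I would argue by induction on $\HAP$-derivations in the $\LPT$ Hilbert calculus. The logical rules and axioms $\LPT\ref{ax:Id}$--$\LPT\ref{ax:StVar}$ translate to intuitionistically valid schemata — the instantiation axioms $\LPT\ref{ax:All},\LPT\ref{ax:Ex}$ use exactly the $\tau\den$ hypothesis, which $\mathcal F$ turns into the existence of a value, and the strictness axioms $\LPT\ref{ax:StFunc},\LPT\ref{ax:StRel},\LPT\ref{ax:StVar},\LPT\ref{ax:StC}$ become trivial consequences of the shape of $\val_\tau$ (a value for a compound term carries values for its immediate subterms). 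For the proper $\HAP$-axioms: $\HAP\ref{HAP:arden}$--$\HAP\ref{HAP:arM}$ reduce to the corresponding $\HA$-axioms via the totality remark above; $\HAP\ref{HAP:ind}$ goes to an instance of $\LHA$-induction (the class $\LHA$ is closed under $\mathcal F$ up to equivalence, and $\HA$ has full induction); and the combinatory axioms $\HAP\ref{HAP:co}$--$\HAP\ref{HAP:rec}$ are precisely the statements that the chosen numerical codes for $\cok,\cos,\cop,\copl,\copr,\cosucc,\cor$ behave as $S,K$, pairing, projections, successor and primitive recursion under Kleene application — each is a true $\Sigma_1$ (in fact provable-in-$\HA$) fact about the $\T$-predicate, established by the $s$-$m$-$n$ and recursion theorems formalised in $\HA$. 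Finally $\HAP\ref{HAP:co}$'s weak-equality clause $\cos xyz\simeq xz(yz)$ is handled by noting $\mathcal F$ sends $\sigma\simeq\rho$ to ``if either side has a value then they have a common value,'' which matches the Kleene-application behaviour of $S$.

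The main obstacle, and the step I would treat most carefully, is verifying the combinatory axioms $\HAP\ref{HAP:co}$--$\HAP\ref{HAP:rec}$ under $\mathcal F$, because it requires actually constructing fixed numerical indices $K,S,\dots$ in $\HA$ and proving, internally, that Kleene application on them satisfies the $\LPT$-style (partial, weakly-equational) combinator laws — in particular that $\cos xy$ always denotes while $\cos xyz$ need not, which is the content of the $s$-$m$-$n$ theorem with the right convergence bookkeeping. This is entirely standard (it is the formalised version of ``combinatory algebra is a model of the partial recursive functions''), and I would simply cite the analogous verifications in \cite[VI.2]{Beeson:1985} and \cite[9.3]{Troelstra_vanDalen:1988b}, observing that they go through verbatim inside $\HA$. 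The bookkeeping for $\cor$ (primitive recursion) additionally uses the recursion theorem and an internal induction, but again this is routine given Proposition~\ref{prp:coHAP}'s classical analogue. Everything else — the translation of the logical calculus and of $\HAP\ref{HAP:arden}$--$\HAP\ref{HAP:ind}$ — is a mechanical induction with no genuine difficulty.
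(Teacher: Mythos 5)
Your proposal is correct and is essentially the standard argument that the paper itself omits, deferring to the cited references (van den Berg--van Slooten, Proposition 2.4; cf.\ Beeson VI.2): interpret $\app$ as Kleene application, unnest partial terms via a value-predicate $\val_\tau$ so that atomic formulae assert the existence of values, verify the $\LPT$ calculus and the combinator axioms in $\HA$ via formalised recursion theory, and observe that arithmetical terms are provably total so that $\mathcal{F}$ is the identity up to $\HA$-equivalence on $\LHA$. No gaps worth flagging beyond the substitution lemma for $\val$ that you already implicitly invoke for the instantiation axioms.
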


\begin{Cor}[Proposition 2.4 of \cite{vandenBerg_vanSlooten:2018}]\label{cor:HAP->HA}
  $\HAP$ is conservative over $\HA$.
\end{Cor}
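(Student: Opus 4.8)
The plan is to derive conservativity directly from Proposition \ref{prp:HAP->HA}. Suppose $\varphi \in \LHA$ and $\HAP \prf \varphi$. By clause (2) of Proposition \ref{prp:HAP->HA}, applying the interpretation $\mathcal{F}$, we obtain $\HA \prf \mathcal{F}(\varphi)$. Since $\varphi$ is an $\LHA$-formula, clause (1) gives $\HA \prf \varphi \liff \mathcal{F}(\varphi)$, and hence $\HA \prf \varphi$ by modus ponens. This is the entire argument: conservativity is just the conjunction of the two clauses of the preceding proposition specialised to $\LHA$-formulae.

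The only thing that genuinely requires work is Proposition \ref{prp:HAP->HA} itself, but that is stated as available, so strictly speaking there is nothing left to prove here. Were one to unpack it, the construction of $\mathcal{F}$ goes by interpreting each object of $\LHAP$ as a natural number, interpreting $\app$ as Kleene application $\{x\}(y)$ (using a standard Kleene-bracket partial recursive application definable in $\HA$), and interpreting the combinator constants $\cok,\cos,\cop,\copl,\copr,\cosucc,\cor$ by indices of the corresponding partial recursive functions. A formula $\phi$ of $\LHAP$ is translated by relativising quantifiers (every number already codes a potential ``partial function'', so no relativisation of the domain is needed) and replacing each atomic subformula $\tau \den$, $\tau \= \sigma$, etc., by its $\LHA$-paraphrase using the graph of Kleene application together with a definedness predicate. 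One then checks that $\HA$ proves the $\mathcal{F}$-translation of every axiom of $\HAP$ (the arithmetical axioms {\color{darkred}$\HAP$}\ref{HAP:arden}--\ref{HAP:arM} are immediate since arithmetical terms always denote, the combinatory axioms {\color{darkred}$\HAP$}\ref{HAP:co}--\ref{HAP:rec} follow from the recursion theorem and the defining properties of the chosen indices, and the induction schema {\color{darkred}$\HAP$}\ref{HAP:ind} transfers because $\HA$ has full induction) and that $\mathcal{F}$ commutes with the rules of $\LPT$; clause (1) follows because on arithmetical formulae the translation only inserts provably trivial definedness clauses and leaves the arithmetical matrix intact up to $\HA$-provable equivalence.

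The main obstacle, in that unpacking, is the bookkeeping around partiality: one must be careful that $\mathcal{F}(\tau\den)$ correctly expresses convergence of the Kleene computation coded by $\tau$, that the strictness axioms {\color{darkred}$\LPT$}\ref{ax:StFunc}, {\color{darkred}$\LPT$}\ref{ax:StRel} translate to true statements about subcomputations, and that substitution $\phi(x/\tau)$ interacts correctly with the translation when $\tau$ may fail to denote (this is exactly the point of the weak-equality machinery in Lemma \ref{lma:Leibniz}). But since Proposition \ref{prp:HAP->HA} is given, the corollary itself is the two-line deduction above.
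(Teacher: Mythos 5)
Your two-line deduction from Proposition \ref{prp:HAP->HA} is exactly the intended argument: the paper states the corollary immediately after that proposition precisely so that conservativity follows from combining its two clauses. The proposal is correct and takes essentially the same approach as the paper.
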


\subsection[Realizability in $\HAP$]{Realizability in $\bHAP$}
\label{sec:HAPr}

Realizability in $\HAP$ is one of the main tools of \cite{vandenBerg_vanSlooten:2018}, and we will utilise many of their results, and those of similar systems like in \cite{Beeson:1985,Troelstra_vanDalen:1988b}. We present this technical background, mainly without proof, in this subsection, together with some additional machinery.

We will, in fact, extend these definitions of realizability to languages $\L$ extending $\LHAP$ with an unspecified set of relation symbols and parameterise this extension by the clauses for these symbols. We let $\L$ be fixed throughout this subsection.

\begin{Def}[Realizability]\label{def:real}
  For each term $\tau$ and formula $\varphi$ we define the formula $\tau \rea \varphi$ after renaming of bound variables in $\varphi$ to avoid conflicts with $\tau$ and nested bindings. The notion of realizability we will use is defined via the following clauses:
  \begin{align*}
    \tau \rea (\sigma_1 \= \sigma_2) &\text{ is } \sigma_1 \= \sigma_2\\
    \tau \rea \falsum &\text{ is } \falsum \\
    \tau \rea (\phi \land \psi) &\text{ is } ( \copl \tau \rea \phi ) \land ( \copr \tau \rea \psi )\\
    \tau \rea (\phi \lor \psi) &\text{ is } (\copl \tau \= \0 \limp \copr \tau \rea \phi) \land (\copl \tau \nonequality \0 \limp \copr \tau \rea \psi) \\
    \tau \rea (\phi \limp \psi) &\text{ is } \forall y (y \rea \phi \limp ({\tau y \den} \land \tau y \rea \psi)) \\
    \tau \rea \forall x \phi &\text{ is } \forall x ({\tau x \den} \land \tau x \rea \phi) \\
    \tau \rea \exists x \phi &\text{ is } \copr \tau \rea (\phi(x/\copl \tau)) \\
    \tau \rea R(\sigma_0,\dotsc,\sigma_{n - 1}) &\text{ is } Q_R(\sigma_0,\dotsc,\sigma_{n - 1},\tau)
  \end{align*}
  where $y$ is a fresh variable (which can be chosen in a systematic fashion which we will not specify). Here $Q_R$ is an as yet unspecified relation symbol whose arity is one greater than that of $R$.
\end{Def}

That this is a well-defined notion, in that it respects $\alpha$-equivalence, can be proved by structural induction.

\begin{Lma}[Partly Lemma 4.4.6.(i) of \cite{Troelstra_vanDalen:1988a}\label{lma:reaFV}]
  For every formula $\phi$ and term $\tau$, $\tau \rea \phi$ is negative and the free variables of $\phi$ are among the free variables of $\tau \rea \phi$, which are among the free variables of $\phi$ and $\tau$.
\end{Lma}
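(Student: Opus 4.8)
The plan is to prove Lemma~\ref{lma:reaFV} by structural induction on the formula $\phi$, following the clauses of Definition~\ref{def:real}. There are three conjuncts to maintain simultaneously in the induction hypothesis: (i) $\tau \rea \phi$ is negative, (ii) $\FV(\phi) \subseteq \FV(\tau \rea \phi)$, and (iii) $\FV(\tau \rea \phi) \subseteq \FV(\phi) \cup \FV(\tau)$. Carrying all three together is essential, since the atomic and quantifier cases feed each other: e.g.\ to see that $\tau \rea \exists x\,\phi$, which is $\copr\tau \rea \phi(x/\copl\tau)$, has the right free variables, I need the IH applied to $\phi$ together with the fact that substituting the term $\copl\tau$ for $x$ replaces a possible free occurrence of $x$ by $\FV(\copl\tau) \subseteq \FV(\tau)$.

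First I would handle the base cases. For $\tau \rea (\sigma_1 \= \sigma_2)$, which is literally $\sigma_1 \= \sigma_2$: this is an atomic formula with no disjunctions or existential quantifiers, hence negative (indeed almost negative and existential-free); its free variables are exactly those of $\sigma_1,\sigma_2$, which is a subset of $\FV(\sigma_1 \= \sigma_2) \cup \FV(\tau)$, and it is worth noting here that the variables of $\tau$ need not all appear — the three conjuncts only assert inclusions, not equalities, so this is fine. For $\tau \rea \falsum$, which is $\falsum$: negative, and $\FV(\falsum) = \emptyset$, trivial. For the relation-symbol case $\tau \rea R(\vec\sigma) = Q_R(\vec\sigma,\tau)$, negativity holds because $Q_R$ is a relation symbol applied to terms (atomic, no $\lor$, no $\exists$), and the free-variable bookkeeping is immediate.

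For the inductive cases I would go through $\land$, $\lor$, $\limp$, $\forall$, $\exists$ using the corresponding clauses. In each case negativity of the translate follows from negativity of the immediate subtranslates plus the observation that the newly introduced logical material is benign: conjunctions, universal quantifiers, implications, and equations between terms like $\copl\tau \= \0$ are all permitted in negative formulae, and crucially the disjunction clause $\tau \rea (\phi \lor \psi)$ does \emph{not} introduce a $\lor$ but instead an implication-guarded conjunction, so negativity is preserved. For the free-variable inclusions, the $\land$, $\lor$, $\limp$, $\forall$ cases use that the projections $\copl\tau,\copr\tau$ and the applications $\tau y$, $\tau x$ have free variables contained in $\FV(\tau) \cup \{y\}$ or $\FV(\tau) \cup \{x\}$, and that the bound/fresh variables $y$ (in $\limp$) and $x$ (in $\forall,\exists$) get quantified away or are bound in the original formula, so they do not leak. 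The renaming of bound variables in $\varphi$ to avoid clashes with $\tau$ and to avoid nested bindings, built into the definition, is exactly what guarantees that the fresh $y$ in the implication clause and the bound $x$ in the quantifier clauses are genuinely disjoint from $\FV(\tau)$, so no spurious identifications occur.

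I do not expect any serious obstacle here; the lemma is essentially a careful unwinding of the definition. The one point requiring a little attention is the $\exists$ clause, where a substitution $\phi(x/\copl\tau)$ intervenes: I would invoke the standard fact (consistent with the substitution conventions fixed in Section~\ref{sec:prel}) that $\FV(\phi(x/\copl\tau)) \subseteq (\FV(\phi)\setminus\{x\}) \cup \FV(\copl\tau)$ and $\supseteq \FV(\phi)\setminus\{x\}$ if $x$ does in fact occur, combined with the IH for $\copr\tau \rea \phi$ — noting that the IH is applied to $\copr\tau$ and $\phi$ and then the substitution is threaded through, which is legitimate because realizability was stipulated to be defined after renaming bound variables of $\phi$ to avoid conflicts with the realizing term. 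A secondary routine point is simply verifying at the outset that the clauses respect $\alpha$-equivalence (already remarked after Definition~\ref{def:real}), so that "the formula $\tau \rea \phi$" is well defined and structural induction is available.
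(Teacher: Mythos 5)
Your proof is correct and is exactly the standard argument; the paper itself omits the proof entirely, deferring to Troelstra and van Dalen, and the intended justification is precisely this simultaneous structural induction on $\phi$ carrying the negativity claim together with the two free-variable inclusions. The one point you rightly flag — that in the $\exists$ clause the induction hypothesis is invoked for $\phi(x/\copl\tau)$, which is not a subformula but has the same logical complexity as $\phi$, so the induction must be on complexity rather than the subformula relation — is the only place where any care is needed, and you handle it adequately.
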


For the rest of this section we will simply let $\rea R$ denote the unspecified relation $Q_R$ in the definition of $\rea$ above, whence the final clause reads
\begin{align*}
  \tau \rea R(\sigma_0,\dotsc,\sigma_{n - 1}) &\text{ is } (\rea R)(\sigma_0,\dotsc,\sigma_{n - 1},\tau)
\end{align*}
for relations $R$. We will make some concrete choices for this $\rea R$ in particular cases in section \ref{sec:rIID}. This draws attention to a potential subtlety, in that $\tau \rea \varphi(\vec{x})$ could a priori mean both $\tau \rea (\varphi(\vec{x}))$ and $(\tau \rea \varphi)(\vec{x})$. As our choice of notation for the base case indicates, however, these are the same as long as $\tau$ contains none of the $\vec{x}$.

\begin{Lma}[Lemma 4.4.6.(ii) of \cite{Troelstra_vanDalen:1988a}\label{lma:readist}]
  Substitution distributes over realizability: Let $\varphi$ be a formula and $\tau$ a term. For all equally long finite sequences $\vec{u}$ and $\vec{\sigma}$ of variables and terms, respectively, $(\tau \rea \varphi)(\vec{u}/\vec{\sigma}) = (\tau[\vec{u}/\vec{\sigma}]) \rea\, (\varphi(\vec{u}/\vec{\sigma}))$. In particular, $(x \rea \varphi)(x/\tau) = \tau \rea \varphi$ if $x$ is fresh.
\end{Lma}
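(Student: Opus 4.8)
The plan is to argue by induction on the number of logical symbols in $\varphi$, taking the statement uniformly in the term $\tau$ and in all equally long sequences $\vec u$ of variables and $\vec\sigma$ of terms. Using logical complexity rather than literal subformula order as the induction measure matters only for the $\exists$-clause: its unfolding involves a term substitution into the immediate subformula, and such a substitution does not change logical complexity, so the inductive hypothesis still applies to the result.

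Before computing either side I pass to a representative of $\varphi$ whose bound variables avoid every variable occurring in, or free in, $\tau$, $\vec u$ and $\vec\sigma$, and which has no nested bindings; the systematic choice of the fresh variable $y$ in the $\limp$-clause of Definition~\ref{def:real} can likewise be taken outside this same finite set. This is legitimate because formulae are identified up to $\alpha$-equivalence and $\rea$ respects $\alpha$-equivalence (the remark preceding the lemma). With such a representative every renaming demanded by Definition~\ref{def:real} and by the substitution operations is vacuous, so on both sides the defining clauses apply verbatim and the claimed identity reduces to elementary commutations: $[\vec u/\vec\sigma]$ commutes with the formula constructors, with $\copl$, $\copr$ and the application $\app$, and with $\0$ and $\nonequality$. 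For the base cases this already finishes the job: if $\varphi$ is $\sigma_1\=\sigma_2$ both sides are $\sigma_1[\vec u/\vec\sigma]\=\sigma_2[\vec u/\vec\sigma]$ (in particular independent of $\tau$); if $\varphi$ is $\falsum$ both sides are $\falsum$; if $\varphi$ is $R(\sigma_0,\dots,\sigma_{n-1})$ both sides are $(\rea R)(\sigma_0[\vec u/\vec\sigma],\dots,\sigma_{n-1}[\vec u/\vec\sigma],\tau[\vec u/\vec\sigma])$.

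For $\land$ and $\lor$ the identity follows immediately from the hypothesis applied to the two immediate subformulae, with realizers $\copl\tau$ and $\copr\tau$. For $\phi\limp\psi$ one uses the hypothesis for $\phi$ with the fresh realizer variable $y$ and for $\psi$ with realizer $\tau y$, noting that $\forall y$ is untouched by $[\vec u/\vec\sigma]$ and that $(\tau y)[\vec u/\vec\sigma]=(\tau[\vec u/\vec\sigma])\app y$ since $y$ is fresh; the case $\forall x\phi$ is the same with realizer $\tau x$. For $\exists x\phi$, unfolding gives $\copr\tau\rea(\phi(x/\copl\tau))$; applying the hypothesis to $\phi(x/\copl\tau)$ — of the same logical complexity as $\phi$, hence strictly smaller than $\varphi$ — with realizer $\copr\tau$, and then commuting $[\vec u/\vec\sigma]$ past $(x/\copl\tau)$ by the standard substitution lemma for formulae (legitimate since $x$ is neither among $\vec u$ nor free in $\vec\sigma$), produces exactly the unfolding of $(\tau[\vec u/\vec\sigma])\rea((\exists x\phi)(\vec u/\vec\sigma))$. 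The final clause is the instance $\vec u=\langle x\rangle$, $\vec\sigma=\langle\tau\rangle$: then $x[x/\tau]=\tau$ and $\varphi(x/\tau)=\varphi$ because $x\notin\FV(\varphi)$, so the general identity collapses to $(x\rea\varphi)(x/\tau)=\tau\rea\varphi$.

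The statement carries essentially no mathematical content, so the only real obstacle is bookkeeping: one must keep straight the several implicit bound-variable renamings — those built into $\rea$, those built into substitution, and the systematic fresh-variable choice — which is exactly why it is worth (i) fixing an $\alpha$-representative once and for all that makes every side-condition vacuous, and (ii) measuring the induction by logical complexity so that the term substitution appearing in the $\exists$-clause is covered by the hypothesis.
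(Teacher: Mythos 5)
Your proof is correct: the induction on logical complexity (so that the term substitution in the $\exists$-clause is covered by the hypothesis), together with fixing an $\alpha$-representative that renders all bound-variable renamings vacuous, is exactly the standard argument for this fact. The paper itself omits the proof, citing Lemma 4.4.6(ii) of Troelstra and van Dalen, where the same structural induction is carried out, so your proposal matches the intended route.
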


To disambiguate when there is a potential clash we will let $\tau \rea \varphi(\vec{y})$ mean $\tau \rea (\varphi(\vec{y}))$ etc., since this seems to be the natural reading when $\tau$ is a(n explicitly given) single variable. 

\begin{Lma}\label{lma:reasubst}
  Let $n \in \N$, $R$ be an $n$-ary relation symbol, $\tau$ be a term and $\vartheta$ be a formula without free occurrences of $\fv_n$. Then $\tau \rea (\varphi(R/\lambda \vec{\fv}_{< n}.\vartheta)) = (\tau \rea \varphi)(\rea R/\lambda \vec{\fv}_{\leq n}.\fv_n \rea \vartheta)$ for every $\varphi \in \L$.
\end{Lma}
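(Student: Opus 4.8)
The plan is to prove this by structural induction on the formula $\varphi \in \L$, following the clauses of Definition~\ref{def:real}. The statement asserts that realizability commutes with relation-substitution: substituting an almost-negative formula $\vartheta$ for the relation $R$ inside $\varphi$ and then realizing gives the same formula as realizing $\varphi$ first and then substituting $\fv_n \rea \vartheta$ for the auxiliary relation $\rea R$ (with the extra realizer variable $\fv_n$ playing the role of the new last argument). The base case where $\varphi$ is $R(\sigma_0,\dots,\sigma_{n-1})$ is the crux of the whole lemma: on the left, $\varphi(R/\lambda\vec{\fv}_{<n}.\vartheta)$ is $\vartheta(\vec\fv_{<n}/\vec\sigma)$, and realizing that yields $\tau \rea (\vartheta(\vec\sigma))$; on the right, $(\tau \rea R(\vec\sigma))$ is $(\rea R)(\sigma_0,\dots,\sigma_{n-1},\tau)$, and substituting $\lambda\vec\fv_{\leq n}.\fv_n\rea\vartheta$ for $\rea R$ produces $(\fv_n \rea \vartheta)(\vec\fv_{\leq n}/\langle\vec\sigma,\tau\rangle)$. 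One then checks these two formulae are literally equal, using Lemma~\ref{lma:readist} (substitution distributes over realizability) to move the substitution of $\vec\sigma$ for $\vec\fv_{<n}$ through the $\rea$, and the side condition that $\fv_n$ does not occur free in $\vartheta$ so that substituting $\tau$ for $\fv_n$ in $\fv_n \rea \vartheta$ yields exactly $\tau \rea \vartheta$ (again by the "in particular" clause of Lemma~\ref{lma:readist}). The other atomic cases (equations, $\falsum$, and relation symbols $S \neq R$) are trivial since neither side touches $R$ or $\rea R$.

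For the inductive cases one unfolds the realizability clause on both sides and applies the induction hypothesis to the immediate subformulae. For $\land$, $\lor$, $\limp$, $\forall$ the argument is purely syntactic bookkeeping: e.g.\ for $\phi \limp \psi$, the left side is $\forall y (y \rea (\phi(R/\dots)) \limp ({\tau y}\den \land \tau y \rea (\psi(R/\dots))))$, and substituting $\lambda\vec\fv_{\leq n}.\fv_n\rea\vartheta$ for $\rea R$ into the right side $\forall y(y \rea \phi \limp ({\tau y}\den \land \tau y \rea \psi))$ commutes with all the logical connectives and quantifiers (the substitution is into a relation symbol, not a first-order variable, so it passes through $\forall y$, $\land$, $\limp$ and the term $\tau y$ untouched), reducing to the induction hypotheses for $\phi$ and $\psi$. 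The $\exists$ case, $\tau \rea \exists x\phi$ being $\copr\tau \rea (\phi(x/\copl\tau))$, additionally requires that relation-substitution commutes with the first-order substitution $x/\copl\tau$, which holds because $R$ and $x$ are syntactically independent and the relevant renaming conventions are in force.

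The main obstacle is not mathematical depth but the careful handling of the simultaneous renaming conventions baked into Definition~\ref{def:real} (bound variables in $\varphi$ renamed to avoid clashes with $\tau$ and to avoid nested bindings) together with the renamings implicit in the relation-substitution notation $\Phi(R/\lambda\vec x.\vartheta)$ from the preliminaries. One must argue that these renamings can be chosen coherently on both sides so that the two resulting formulae are genuinely \emph{equal} (not merely $\alpha$-equivalent) — or, since formulae are identified up to $\alpha$-equivalence, that the canonical representatives coincide. I would dispatch this by noting, as the paper does for the well-definedness of $\rea$ itself, that all these choices are made in a fixed primitive-recursive manner and that the inductive construction of $\tau\rea(\cdot)$ respects $\alpha$-equivalence, so the equality claimed is an equality of $\alpha$-equivalence classes; then the structural induction goes through clause by clause with Lemmas~\ref{lma:reaFV} and~\ref{lma:readist} supplying the needed facts about free variables and about substitution distributing over $\rea$. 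The hypothesis $\fv_n \notin \FV(\vartheta)$ is used precisely once, in the atomic $R$-case, to identify $(\fv_n\rea\vartheta)(\fv_n/\tau)$ with $\tau\rea\vartheta$.
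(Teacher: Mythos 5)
Your proof is correct and follows essentially the same route as the paper, which disposes of the lemma by a one-line structural induction, with Lemma~\ref{lma:readist} and the hypothesis $\fv_n \notin \FV(\vartheta)$ carrying the crucial atomic case $\varphi = R(\vec\sigma)$ exactly as you describe. The only discrepancy is that the paper states the induction as being on the construction of $\vartheta$ rather than of $\varphi$; your choice of inducting on $\varphi$ is the one that actually makes the argument decompose, so this is at most a presentational difference (and arguably an improvement).
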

\begin{proof}
  Induction on the construction of $\vartheta$.
\end{proof}

\begin{Lma}\label{lma:int->rea}
  Realizability is closed under derivability in $\LPT$\textup{:} For every derivation $\mathscr{D}$ with $\Gamma \prf[\mathscr{D}] \varphi$ for formulae $\Gamma$ and $\varphi$ of $\L$ there is a term $\tau$ with at most $\fv_0$ free such that $\HAP(\L) \prf \forall x ( x \rea \Gamma \limp \tau(x)\den \land \tau(x) \rea \varphi)$.
\end{Lma}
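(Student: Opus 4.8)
The plan is to proceed by induction on the length of the $\LPT$-derivation $\mathscr{D}$, constructing the realizing term $\tau$ uniformly as we go. For each axiom or rule of $\LPT$ listed in Definition~\ref{def:LPT}, together with the arithmetical axioms and induction schema of $\HAP$ (since we are really proving this for $\Gamma$ containing those), we must exhibit a $\lambda$-term (built using Proposition~\ref{prp:lambda} and the combinators of Propositions~\ref{prp:PRinHAP}--\ref{prp:coHAP}) that, provably in $\HAP(\L)$, realizes the conclusion whenever its free variable realizes the premises. The inductive steps for the rules then amount to combining the realizers of the premises by application; e.g.\ modus ponens ({\color{darkred}$\LPT$}\ref{rule:MP}) uses that if $s$ realizes $\phi$ and $t$ realizes $\phi \limp \psi$ then $t s$ denotes and realizes $\psi$, straight from the clause for $\limp$ in Definition~\ref{def:real}.

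First I would record the base cases carefully. The purely logical axioms {\color{darkred}$\LPT$}\ref{ax:Id}--{\color{darkred}$\LPT$}\ref{ax:StRel} split into three groups. (i)~The propositional and quantifier axioms/rules: here the realizers are the standard ones from Kleene realizability adapted to partial terms — identity combinator for {\color{darkred}$\LPT$}\ref{ax:Id}, pairing $\cop$ and projections $\copl,\copr$ for conjunction {\color{darkred}$\LPT$}\ref{ax:Con}, left/right injection (a pair $\langle 0\text{ or }1, \cdot\rangle$) for disjunction {\color{darkred}$\LPT$}\ref{ax:Dis}, and the combinators from Proposition~\ref{prp:lambda} for instantiation {\color{darkred}$\LPT$}\ref{ax:All}, {\color{darkred}$\LPT$}\ref{ax:Ex} and generalisation {\color{darkred}$\LPT$}\ref{rule:All}, {\color{darkred}$\LPT$}\ref{rule:Ex}. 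The key point in the partial-terms setting is that wherever the old argument used ``$\phi(x/\tau)$'', we now also have the hypothesis $\tau\den$ available, which is exactly what is needed to discharge the side conditions ${\tau x\den}$ appearing in the realizability clauses for $\forall$ and $\exists$. (ii)~The equality axioms {\color{darkred}$\LPT$}\ref{ax:eqRef}--{\color{darkred}$\LPT$}\ref{ax:eqFunc}: since $\tau \rea (\sigma_1 \= \sigma_2)$ is literally $\sigma_1 \= \sigma_2$, any fixed denoting term (say $\0$, or better a $\lambda$-term guaranteed to denote) realizes these, using that $\HAP$ proves the underlying equality facts and Lemma~\ref{lma:Leibniz}; for {\color{darkred}$\LPT$}\ref{ax:eqRel} one needs $\rea R$ to respect $\=$ in its first $n$ arguments, which holds since $Q_R$ is a relation symbol of $\L$ and hence subject to {\color{darkred}$\LPT$}\ref{ax:eqRel} in $\L$. (iii)~The strictness axioms {\color{darkred}$\LPT$}\ref{ax:StFunc}, {\color{darkred}$\LPT$}\ref{ax:StVar}, {\color{darkred}$\LPT$}\ref{ax:StC}, {\color{darkred}$\LPT$}\ref{ax:StRel}: again these are (near-)equational, so realized by a fixed denoting term, using that $\HAP$ proves the strictness facts themselves and, for {\color{darkred}$\LPT$}\ref{ax:StRel}, that $\rea R$ as a relation of $\L$ satisfies its own strictness axiom. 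Then the $\HAP$-specific axioms {\color{darkred}$\HAP$}\ref{HAP:arden}--{\color{darkred}$\HAP$}\ref{HAP:rec} are all (weak) equalities or denotation statements, hence self-realized by a fixed denoting term; these are in $\Gamma$ only if one wants the statement for $\Gamma \supseteq \HAP(\L)$, but in any case they are available as theorems via Corollary~\ref{cor:HAP|-HA} and Propositions~\ref{prp:PRinHAP}--\ref{prp:coHAP}.

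The one genuinely substantial base case is the induction schema {\color{darkred}$\HAP$}\ref{HAP:ind}. Here, given a realizer $a$ of $\varphi(x/\0)$ and a realizer $b$ of $\forall x(\varphi \limp \varphi(x/\fS(x)))$, one must produce a realizer of $\forall x\,\varphi$, i.e.\ a term $t$ with $\forall x\,(t x\den \land t x \rea \varphi)$. This is done by primitive recursion: set $t = \cor\,a\,(\lambda x.\lambda u.\, b\,x\,u)$ so that $t\0 \simeq a$ and $t(\fS x) \simeq b\,x\,(t x)$, using the recursor clauses of {\color{darkred}$\HAP$}\ref{HAP:rec} (Proposition~\ref{prp:PRinHAP}), and then prove $\forall x(t x\den \land t x \rea \varphi)$ by induction \emph{in $\HAP(\L)$} on $x$ — this is legitimate because $t x\den \land t x \rea \varphi$ is itself a formula of $\L$, so {\color{darkred}$\HAP$}\ref{HAP:ind} applies to it. The base step is $a\den \land a \rea \varphi(x/\0)$, which follows from the hypothesis on $a$ (denotation of $a$ being extractable since $a \rea \varphi(x/\0)$ is negative and one may assume the realizer supplied is a variable, hence denotes by {\color{darkred}$\LPT$}\ref{ax:StVar}); the step uses the hypothesis on $b$ together with Lemma~\ref{lma:readist} to identify $(t x \rea \varphi)(x/\fS x)$ with the realizability statement for $\varphi(x/\fS(x))$. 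I would also need Lemma~\ref{lma:reaFV} to control free variables throughout, so that the final $\tau$ genuinely has at most $\fv_0$ free.

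**The main obstacle** I anticipate is bookkeeping rather than conceptual: making the induction hypothesis uniform enough that the realizer $\tau$ built for a derivation really depends only on the derivation and not on the formulas, and handling the ``fresh variable'' and $\alpha$-renaming conventions so that Lemma~\ref{lma:readist} and Lemma~\ref{lma:reasubst} can be applied at each rule without clashes — in particular in the quantifier rules {\color{darkred}$\LPT$}\ref{rule:All}, {\color{darkred}$\LPT$}\ref{rule:Ex}, where the variable $x$ must be kept disjoint from the realizing variable $\fv_0$ and from any variables in $\tau$. The genuinely non-routine verification is the induction step, where one must be careful that the $\HAP(\L)$-internal induction is on a formula that is provably equivalent to, but syntactically may differ slightly from, the naive ``$t x \rea \varphi$'', and that the denotation conjunct $t x \den$ is correctly propagated by the recursor equations even though $\cor$ applied to a successor is only \emph{weakly} equal to the recursive unfolding. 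Everything else reduces to the standard combinatory-algebra facts already established in Propositions~\ref{prp:lambda}--\ref{prp:coHAP} and the soundness of Kleene realizability for intuitionistic logic, now with the extra denotation side-conditions discharged by the strictness axioms.
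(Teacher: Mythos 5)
Your proposal is correct and takes essentially the same approach as the paper, which proves this lemma simply by induction on the depth of the derivation with the standard Kleene-style realizers for each $\LPT$ axiom and rule (deferring the details to Beeson). Note only that the lemma itself concerns pure $\LPT$-derivability from $\Gamma$ — the $\HAP$ axioms and the induction schema are treated separately when the lemma is applied in Theorem~\ref{thm:rea->Lambda} — so your (correct) recursor construction for realizing induction properly belongs to that later step rather than to this one.
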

\begin{proof}
  By induction on the depth of $\mathscr{D}$. The proof is mostly as in \cite[VII.\ Theorem 1.6]{Beeson:1985}.
\end{proof}

While we rely heavily on the work in \cite{vandenBerg_vanSlooten:2018}, our definitions of realizability are not literally the same, since there disjunction is a defined predicate ($\varphi \lor \psi$ is $\exists n ((n \= \0 \limp \varphi) \land (n \nonequality \0 \limp \psi))$). This difference is immaterial, however, since the notions are equivalent in the following sense. In the following lemma let $\reha$ denote the notion of realizability from \cite{vandenBerg_vanSlooten:2018}.

\begin{Lma}\label{lma:eqrea}
  The realizability notion $\rea$ is equivalent to $\reha$ in the following sense: for every formula $\varphi \in \LHAP$ there are total terms $\mu$ and $\nu$ such that $\HAP \vdash \forall x ( x \rea \varphi \limp \mu x \reha \varphi)$ and $\HAP \vdash \forall x (x \reha \varphi \limp \nu x \rea \varphi) $. Consequently $\HAP \vdash \exists x (x \rea \varphi) \liff \exists x (x \reha \varphi)$.
\end{Lma}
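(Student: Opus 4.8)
The plan is to prove the two implications by a single induction on the build‑up of $\varphi$, strengthening the statement so that the witnessing terms are closed and total and send realizers to \emph{denoting} realizers: for every $\varphi \in \LHAP$ we produce closed terms $\mu_\varphi, \nu_\varphi$ with $\HAP \vdash \mu_\varphi \den$, $\HAP \vdash \nu_\varphi \den$,
\begin{align*}
  \HAP &\vdash \forall x\, (x \rea \varphi \limp \mu_\varphi x \den \land \mu_\varphi x \reha \varphi), \\
  \HAP &\vdash \forall x\, (x \reha \varphi \limp \nu_\varphi x \den \land \nu_\varphi x \rea \varphi).
\end{align*}
The final assertion of the lemma then follows at once: under the hypothesis $x \rea \varphi$ one obtains $\mu_\varphi x \den \land \mu_\varphi x \reha \varphi$, hence $\exists y\, (y \reha \varphi)$ by existential instantiation ({\color{darkred}$\LPT$}\ref{ax:Ex}), and dually, so that $\HAP \vdash \exists x\, (x \rea \varphi) \liff \exists x\, (x \reha \varphi)$ by existential generalisation.

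As recorded just above the lemma, the clauses defining $\rea$ and $\reha$ coincide on atomic formulae, $\falsum$, $\land$, $\limp$, $\forall$ and $\exists$; only the disjunction clause differs, $\reha$ reading $\varphi_0 \lor \varphi_1$ as $\exists n\, ((n \= \0 \limp \varphi_0) \land (n \nonequality \0 \limp \varphi_1))$. In all the coinciding cases the witnessing terms are obtained routinely from the induction hypotheses by pairing and projecting with $\cop, \copl, \copr$ together with $\lambda$‑abstraction (Proposition~\ref{prp:lambda}); for instance one takes $\mu_{\varphi_0 \land \varphi_1} := \lambda w.\, \cop\, (\mu_{\varphi_0}(\copl w))\, (\mu_{\varphi_1}(\copr w))$ and $\mu_{\varphi_0 \limp \varphi_1} := \lambda w\, y.\, \mu_{\varphi_1}(w(\nu_{\varphi_0} y))$, with $\nu$ defined symmetrically. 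For the quantifier cases one additionally appeals to Lemma~\ref{lma:readist}, so that the induction hypothesis for the matrix $\psi$, instantiated via the substitution of $\copl\tau$ for the bound variable, matches the defining clause $\copr\tau \rea (\psi(z/\copl\tau))$ for $\tau \rea \exists z\, \psi$ (and likewise for $\forall$). Unconditional totality $\HAP \vdash \mu_\varphi \den$ is immediate because a $\lambda$‑term always denotes, while $\mu_\varphi x \den$ under the hypothesis $x \rea \varphi$ follows because $\cop$, $\copl$, $\copr$ preserve denotation (Definition~\ref{def:HAP}) and every sub‑application in the body is of the shape $\mu_\psi(\cdots)$ or $\nu_\psi(\cdots)$ applied to a genuine realizer, hence denoting by the induction hypothesis.

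The one case requiring care is $\varphi \equiv \varphi_0 \lor \varphi_1$. For $\mu$, given $\tau \rea \varphi$ write $n := \copl\tau$ and set
\[
  \mu_\varphi \tau := \cop\, n\, \bigl( \cop\, (\lambda y.\, \mu_{\varphi_0}(\copr\tau))\, (\lambda y.\, \mu_{\varphi_1}(\copr\tau)) \bigr);
\]
a case split on the (in $\HAP$ decidable) instance $n \= \0 \lor n \nonequality \0$ shows this realizes the $\exists n(\cdots)$‑formula in the sense of $\reha$: the inactive implication is realized vacuously since its antecedent fails, and the active one because $\copr\tau \rea \varphi_i$ gives $\mu_{\varphi_i}(\copr\tau) \reha \varphi_i$ by the induction hypothesis, while both $\lambda$‑terms denote outright so no denotation obligation is incurred on the inactive branch. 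For $\nu$, given $\tau \reha \varphi$, unwinding the clauses for $\exists$, $\land$ and $\limp$ yields, with $n := \copl\tau$, that $\copl(\copr\tau)\0 \reha \varphi_0$ whenever $n \= \0$ and $\copr(\copr\tau)\0 \reha \varphi_1$ whenever $n \nonequality \0$; using the definition‑by‑cases combinator $\cod$ of Proposition~\ref{prp:coHAP} one puts
\[
  \nu_\varphi \tau := \cop\, n\, \bigl( \cod\, n\, \0\, (\lambda z.\, \nu_{\varphi_0}(\copl(\copr\tau)\0))\, (\lambda z.\, \nu_{\varphi_1}(\copr(\copr\tau)\0))\, \0 \bigr),
\]
so that (by Proposition~\ref{prp:coHAP} and Proposition~\ref{prp:lambda}, transporting realizability along $\simeq$ with the Leibniz Lemma~\ref{lma:Leibniz}) the second component reduces to $\nu_{\varphi_0}(\copl(\copr\tau)\0)$ or to $\nu_{\varphi_1}(\copr(\copr\tau)\0)$ according to the value of $n$, which by the induction hypothesis realizes the correct disjunct of $\varphi_0 \lor \varphi_1$ in the sense of $\rea$.

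I expect the disjunction case to be the only real obstacle, and within it the bookkeeping forced by partiality: the bodies $\mu_{\varphi_i}(\copr\tau)$ and $\nu_{\varphi_i}(\dotsc)$ need not denote off the selected branch, so they must be guarded inside a $\lambda$‑abstraction (or in the spare argument slot of $\cod$) and the denotation and realizability obligations discharged only on the branch picked out by the tag $n$ — which is exactly where ``$\lambda$‑terms always denote'' (Proposition~\ref{prp:lambda}) and the denotation‑preservation of $\cop, \copl, \copr$ do the work. All remaining cases are mechanical propagation of the induction hypotheses through the shared clauses.
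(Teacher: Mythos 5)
The paper states this lemma without proof (it is among the results deferred to the dissertation), so there is nothing to compare against; your induction is exactly the standard argument one would expect, and it is correct. In particular you correctly isolate the only point of divergence between $\rea$ and $\reha$ (the disjunction clause) and handle the partiality issues there properly, guarding the off-branch subterms inside always-denoting $\lambda$-abstractions and discharging the denotation obligations only on the branch selected by the decidable tag $\copl\tau \= \0 \lor \copl\tau \nonequality \0$.
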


Since we can thus identify the two notions of realizability we get the following as a consequence of Corollary 3.6 from \cite{vandenBerg_vanSlooten:2018}:

\begin{Prp}\label{prp:HAPe->HA}
  $\HAP$ plus the schema $\varphi \liff \exists x (x \rea \varphi)$ for $\varphi \in \LHA$ is conservative over $\HA$. In particular, for every formula \( \varphi \in \LHA \), if \( \HAP \vdash \exists x (x \rea \varphi) \) then \( \HA \vdash \varphi \).
\end{Prp}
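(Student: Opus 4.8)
The plan is to deduce the proposition from Corollary 3.6 of \cite{vandenBerg_vanSlooten:2018} by way of the equivalence of the two realizability notions recorded in Lemma \ref{lma:eqrea}. Recall that Corollary 3.6 of \cite{vandenBerg_vanSlooten:2018} states that $\HAP$ together with the schema $\varphi \liff \exists x (x \reha \varphi)$, for $\varphi \in \LHA$, is a conservative extension of $\HA$. So the task reduces to transferring this statement along Lemma \ref{lma:eqrea}.

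First I would observe that, over $\HAP$, the schema $\varphi \liff \exists x (x \rea \varphi)$ (for $\varphi \in \LHA$) and the schema $\varphi \liff \exists x (x \reha \varphi)$ (for $\varphi \in \LHA$) are mutually derivable: by the last clause of Lemma \ref{lma:eqrea} we have $\HAP \vdash \exists x (x \rea \varphi) \liff \exists x (x \reha \varphi)$ for each $\varphi \in \LHA$, so each instance of one schema follows over $\HAP$ from the corresponding instance of the other. Hence, letting $T$ denote $\HAP$ extended by all instances of $\varphi \liff \exists x (x \rea \varphi)$ with $\varphi \in \LHA$, and $T'$ the corresponding extension by $\varphi \liff \exists x (x \reha \varphi)$, the theories $T$ and $T'$ have exactly the same theorems. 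By Corollary 3.6 of \cite{vandenBerg_vanSlooten:2018}, $T'$ is conservative over $\HA$; therefore so is $T$, which is the first assertion.

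For the \emph{in particular} clause, suppose $\HAP \vdash \exists x (x \rea \varphi)$ for some $\varphi \in \LHA$. Since $T$ extends $\HAP$, we have $T \vdash \exists x (x \rea \varphi)$, and $T$ also proves the instance $\varphi \liff \exists x (x \rea \varphi)$ of its defining schema, so $T \vdash \varphi$. As $\varphi \in \LHA$ and $T$ is conservative over $\HA$ by the previous paragraph, we conclude $\HA \vdash \varphi$.

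I do not expect a real obstacle in this argument: it is essentially a bookkeeping step once Lemma \ref{lma:eqrea} and Corollary 3.6 of \cite{vandenBerg_vanSlooten:2018} are available. The only point that needs a little attention is to check that the realizability notion for which \cite{vandenBerg_vanSlooten:2018} proves conservativity is precisely $\reha$ (i.e.\ the one in which disjunction is the defined connective of that paper), so that Lemma \ref{lma:eqrea} applies verbatim; this is exactly the purpose for which that lemma was proved.
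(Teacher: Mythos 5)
Your argument is correct and coincides with the paper's: the proposition is obtained exactly by transferring Corollary 3.6 of \cite{vandenBerg_vanSlooten:2018} along the equivalence of $\rea$ and $\reha$ recorded in Lemma \ref{lma:eqrea}, and your derivation of the \emph{in particular} clause from the schema is the intended one.
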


It is crucial for the above proposition that $\LHA$ is a subset of $\LHAP$, instead of being embedded into it in a way involving the combinators or the application function symbol specific to the language \( \LHAP \), like the inclusion of the rest of $\LPRA$ in Proposition \ref{prp:PRinHAP}.

\section{A hierarchy of almost negative formulae}
\label{sec:Lambda}

An important subtheory of $\IID1$ is that given by the fixpoint axioms for operator forms which are almost negative. Recall a formula is almost negative if it contains no disjunctions and the matrix of any existential quantifier is an equation between terms. \( \AnHier \) denotes the class of almost negative formulae.

In this section we introduce a hierarchy $(\AnHierC_n)$ of formulae which exhausts the almost negative formulae $\AnHierC$. In classical arithmetic, the arithmetic (quantifier) hierarchy classifies formulae of the language according by quantifier complexity (that is, quantifier alternations). This hinges on the existence of a prenex normal form, and in particular on De Morgan's laws and the possibility to define $\limp$ in terms of $\lor$ and $\neg$. For intuitionistic arithmetic this is no longer a possibility and the situation is not as simple. A few hierarchies for intuitionistic arithmetic have been proposed, in this context we should mention Wolfgang Burr whose $\Phi^{\prime}$-hierarchy in \cite{Burr:2000} is very close to the one we consider (see also Burr's \cite{Burr:2004}).

This formula hierarchy will behave relative to $\AnHierC$ in several respects similarly to how the arithmetic hierarchy behaves relative to the entire language in the classical case. In addition to exhausting the almost negative formulae (in a sense made precise below), each level of the hierarchy contains a partial satisfaction predicate for that level: for each $n \in \N$ there is a $\AnHierC_n$-formula $\fdSat_n$ which $\HAP$ proves to be a (compositional) satisfaction predicate for formulae in $\AnHierC_n$. The construction of the $\fdSat_n$ will be the main content of section \ref{sec:SatHAP}.

\subsection{Stratifying the almost negative formulae}
\label{sec:stratify}

For now we turn to more elementary properties of $\AnHierC_n$. Two of the most notable of these are the Diagonal Lemma \ref{lma:Diag} and the closure of $\AnHierC_n$ under substitution into strictly positive positions (Lemma \ref{lma:PosInsert}).
These properties play crucial roles in the interpretability argument of Theorem \ref{thm:IIDP->HAP}.

\begin{Def}
  $\Sigge$, the set of $\Sigma$-equations, is the set of formulae of the form $\exists x \, \sigma = \tau$, where $\sigma$ and $\tau$ are terms. $\AnHierC_0$ is the smallest set closed under conjunction which contains $\Sigge$ and the atomic formulae. $\AnHierC_{n + 1}$ is the smallest set that contains $\Sigge$ and atomic formulae which is closed under conjunction, universal quantification and implications with $\AnHierC_n$ antecedents; that is, if $\varphi \in \AnHierC_{n + 1}$ and $\psi \in \AnHierC_n$ then $\psi \limp \varphi \in \AnHierC_{n + 1}$.

  The normal form hierarchy $ ( \AnHierN_n )_n$ is defined as follows: $\AnHierN_0$ is the set conjunctions $ \psi \land \bigland_{i=1}^k \varphi_i $ where \( \psi \in \Sigge \), \( k \ge 0 \) and each \( \varphi_i \) for $i \leq k$ is an atomic formula other than $\falsum$ or an equation (whence $\AnHierN_0 = \Sigge$ for $\LHA$, $\LPRA$ and $\LHAP$).
  $\AnHierN_{n + 1}$ is the set of universally quantified conjunctions  of implications with \( \AnHierN_n \) antecedents and \( \AnHierN_0 \) consequents, namely formulae of the form $\forall x \bigland_{i=0}^{k} ( \phi_i \limp \psi_i )$ where \( \{ \phi_0 , \dotsc, \phi_k \} \subseteq \AnHierN_n \) and \( \{ \psi_0 , \dotsc, \psi_k \} \subseteq \AnHierN_0 \).
  Let $\AnHierN = \bigcup_{n \in \N} \AnHierN_n$.
\end{Def}

It should be noted that this stratification of $\AnHier$ is closed under $\alpha$-equivalence and thus is well defined.
A consequence of the definition is that the $\AnHierC_n$ form a strictly telescoping hierarchy:

\begin{Lma}\label{lma:Lambdatelescope}
  For every $n \in \N$, $\AnHierC_n \subset \AnHierC_{n + 1}$.
\end{Lma}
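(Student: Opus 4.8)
The plan is to prove the two inclusions $\AnHierC_n \subseteq \AnHierC_{n+1}$ and $\AnHierC_n \neq \AnHierC_{n+1}$ separately, the first by a straightforward induction and the second by a quantifier-complexity counting argument.

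For the inclusion, I would argue by induction on $n$. For $n = 0$: $\AnHierC_0$ is the smallest set containing $\Sigge$ and the atomic formulae closed under conjunction, while $\AnHierC_1$ contains $\Sigge$ and the atomic formulae and is closed under conjunction (among other operations), so $\AnHierC_0 \subseteq \AnHierC_1$ by minimality of $\AnHierC_0$. For the step, suppose $\AnHierC_n \subseteq \AnHierC_{n+1}$; I want $\AnHierC_{n+1} \subseteq \AnHierC_{n+2}$. Since $\AnHierC_{n+1}$ is the smallest set containing $\Sigge$ and the atomic formulae and closed under conjunction, universal quantification, and implication with $\AnHierC_n$-antecedents, it suffices to check that $\AnHierC_{n+2}$ has all these closure properties relative to $\AnHierC_{n+1}$-members: it contains $\Sigge$ and the atomic formulae and is closed under conjunction and universal quantification by definition, and it is closed under implication with $\AnHierC_{n+1}$-antecedents because $\AnHierC_{n+1} \subseteq \AnHierC_{n+2}$ would be needed for the antecedent slot --- but actually the definition of $\AnHierC_{n+2}$ only permits $\AnHierC_{n+1}$-antecedents, which is exactly what we have. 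So $\AnHierC_{n+1} \subseteq \AnHierC_{n+2}$ follows by minimality. (The induction hypothesis $\AnHierC_n \subseteq \AnHierC_{n+1}$ is what licenses treating every formula built this way as lying in $\AnHierC_{n+2}$; without it the antecedent condition would not line up.)

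For strictness, I would exhibit, for each $n$, a formula in $\AnHierC_{n+1} \setminus \AnHierC_n$. The natural candidate is a formula with $n+1$ genuinely alternating universal quantifiers and $\AnHierC$-implications, e.g. built by iterating $\psi \mapsto (\psi \limp \forall x\,\text{(something)})$ starting from an atomic formula, chosen so that it is not $\HAP$-equivalent to --- no, wait, we only need syntactic non-membership here, since the lemma as stated is about the syntactic classes $\AnHierC_n$ themselves, not $\AnHierC_n(\HAP)$. So it suffices to define a syntactic complexity measure (roughly, the maximal nesting depth of universal-quantifier/implication alternations), show every $\AnHierC_n$-formula has measure $\le n$, and show the candidate formula has measure exactly $n+1$. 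This is a routine structural induction on the definition of $\AnHierC_n$.

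The main obstacle, such as it is, will be bookkeeping in the strictness half: pinning down a complexity measure that is (a) provably bounded by $n$ on $\AnHierC_n$ and (b) demonstrably unbounded, while respecting the fact that $\Sigge$-formulae and atomic formulae sit at every level and that $\AnHierC_0$ is closed under conjunction but not quantification. One has to be slightly careful that adding conjunctions or $\Sigge$-subformulae does not inflate the measure, so the measure should count only the alternation structure coming from the $\forall$/$\limp$-with-$\AnHierC_{n-1}$-antecedent clauses. Given the telescoping structure already set up, none of this is deep; the inclusion half is immediate from the minimality characterisations, and I expect the authors' proof to simply cite the definitions for $\subseteq$ and give the obvious witness formula for $\neq$.
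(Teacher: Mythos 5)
Your proposal is correct, and since the paper states this lemma without proof (as ``a consequence of the definition''), your argument is just the natural fleshing-out: the inclusion follows from the minimality clauses in the definition together with the induction hypothesis that lets $\AnHierC_n$-antecedents count as $\AnHierC_{n+1}$-antecedents, and strictness is witnessed by, e.g., the formulae $\chi_0 = \forall x\,(x \= x)$, $\chi_{k+1} = (\chi_k \limp \0 \= \0)$, which lie in $\AnHierC_{k+1}\setminus\AnHierC_k$ because an implication can only enter $\AnHierC_{k+1}$ via the implication clause, forcing its antecedent into $\AnHierC_k$. The only cosmetic remark is that the middle of your induction-step paragraph momentarily conflates the closure property you need ($\AnHierC_n$-antecedents) with the one $\AnHierC_{n+2}$ is defined with ($\AnHierC_{n+1}$-antecedents); your closing parenthetical already straightens this out, so there is no gap.
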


The hierarchy \( ( \AnHierN_n )_n \) represents normal forms for formulae of \( \AnHier \) in the sense that \( \AnHierN _n \subseteq \AnHier_n \) for every \( n \), and every formula in \( \AnHier_n \) is provably equivalent to a formula in \( \AnHierN \) over the theory \( \HA \) or \( \HAP \) as appropriate. This latter result is strengthened in Lemma~\ref{lma:Lambdatransform}.

We are interested here in the almost negative formulae, which is why we allow $\Sigge$ as `atoms,' much as $\Delta_0$-formulae are treated as `atoms' in the classical arithmetical hierarchy.
In the latter case, however, we have generous closure conditions on this set of `atoms,' even though this makes no real difference further on in the arithmetical hierarchy, while here we are requiring the special `atoms' to have a very concrete form.
We could, for the purposes of this paper, have let $\Sigge$ be closed under (at least) conjunctions, disjunctions and existential quantification and subsequently used formulae constructed therefrom by conjunctions, implications and universal quantifications instead of the almost negative formulae (thus thereby essentially allowing general $\Sigma_1$-formulae as `atoms') without much extra work.
  Since the almost negative formulae seem to be the established standard in these contexts, however, we will not generalise in this direction. This will have the added advantage of making the definitions of the satisfaction predicates somewhat clearer since no connectives or quantifiers are in the scope of an existential quantifier.

In the other direction, as opposed to Heyting arithmetic in the language of $\PRA$, every (partial) recursive function is represented by a term in $\HAP$. Thus, adding an existential quantifier to form a $\Sigma$-equation does not yield increased expressibility.
So we could have simplified the definition of $\AnHierN$ in this setting by leaving out the existential quantifiers altogether. However, we gain little by doing so, and thus prefer to have $\AnHierN$ formulated as a template for all adequate languages.

The essential difference between the $\AnHier$-hierarchy and Burr's $\Phi^{\prime}$ from \cite{Burr:2000} is that $\AnHierC_n$ is also closed under conjunction to exhaust the almost negative formulae. Both can be seen as mirroring the ordinary arithmetic hierarchy, in that they are classically equivalent to it.

In the following lemma, let $\PAP$ denote the result of adding the law of excluded middle to $\HAP$.\footnote{To the best of our knowledge the theory $\PAP$ has not been studied. We thus make no claims on suitability for any particular purpose, but merely use it as a name for adding $\TND$ to $\HAP$.} Given a set \( \Gamma \) of formulae and a theory \( \mathrm T \), recall \( \Gamma(\mathrm T) \) denotes the set of formulae that are provably in \( \mathrm T \) equivalent to some formula in \( \Gamma \).

\begin{Lma}\label{lma:Lambdarel}
  $\AnHier_n$ relates to almost negative formulae and the usual arithmetical hierarchy in the following way:
  \begin{enumerate}
  \item The $\AnHierC_n$ exhaust the almost negative formulae, that is $\AnHierC = \bigcup_{n \in \N} \AnHierC_n$.
  \item In the arithmetic languages $\LPRA$ and $\LHAP$\textup{:} Let $\HA^*$ be $\PRHA$ or $\HAP$. Then $\AnHierN_0(\HA^*)$, the formulae equivalent to $\AnHierN_0$-formulae over $\HA^*$, is precisely the formulae equivalent to $\Sigge$-formulae over $\HA^*$, as well as the formulae equivalent to $\Sigma_1$-formulae over $\HA^*$.
  \item In the arithmetic languages $\LPRA$ and $\LHAP$\textup{:} Let $\PA^*$ be $\PRPA$ or $\PAP$. For $n > 0$, $\AnHierN_n(\PA^*) = \Pi_{n + 1}(\PA^*)$. Moreover, $\AnHierN_0(\PA^*) = \Sigge(\PA^*) = \Sigma_1(\PA^*)$.
  \end{enumerate}
  \begin{proof}
    Observe that $\AnHierC_n \subseteq \AnHierC$ for all $n \in \N$.
    We show that \( \AnHierC \subseteq \bigcup_n \AnHier_n \) by induction on the complexity of almost negative formulae:
    \begin{itemize}
    \item If $\varphi$ is an atom or a $\Sigge$-formula, then $\varphi \in \AnHierC_0$.
    \item The induction steps for $\land$ and $\forall$ are straightforward.
    \item If $\varphi = (\vartheta \limp \psi)$ for $\vartheta,\psi \in \bigcup_{k \in \N} \AnHierC_k \cap \AnHierC$, let $n$ be such that both are in $\AnHierC_n$ (by Lemma \ref{lma:Lambdatelescope}). Then $\varphi \in \AnHierC_{n + 1}$.
    \end{itemize}
    This concludes the proof of the first statement.

    For the remaining statements, clearly $\Sigge = \AnHierN_0 \subseteq \Sigma_1$ by definition. We show by induction on $n > 0$ that $\AnHierN_n \subseteq \Pi_{n + 1}(\PA^*)$.
    \begin{itemize}
    \item Let $\varphi = \forall x \bigland_i (\vartheta_i \limp \psi_i)$, where $\vartheta_i,\psi_i \in \AnHierN_0 \subseteq \Sigma_1$. Then by classical logic $\varphi \in \Pi_2(\PA^*)$.
    \item Let $k \geq 1$ and $\varphi = \forall x \bigland_i (\vartheta_i \limp \psi_i)$, where $\vartheta_i \in \AnHierN_k \subseteq \Pi_{k + 1}(\PA^*)$ and $\psi_i \in \AnHierN_0 \subseteq \Sigma_1$. Then by classical logic $\varphi \in \Pi_{k + 2}(\PA^*)$.
    \end{itemize}
    Conversely, since $\Delta_0$-formulae in $\LPRA$ and $\LHAP$ are equivalent to atoms in $\HA^*$, $\Sigma_1 \subseteq \Sigge(\HA^*) = \AnHierN_0(\HA^*) \subseteq \AnHierN_0(\PA^*)$ by structural induction. Hence $\Sigma_1(\HA^*) = \AnHierN_0(\HA^*)$ and the corresponding for $\PA^*$. We show that $\Pi_{n + 1}(\PA^*) = \AnHierN_n(\PA^*)$ by induction on $n > 0$.
    \begin{itemize}
    \item Suppose $\varphi = \forall x \vartheta$, where $\vartheta \in \Sigma_1 \subseteq \AnHierN_0(\PA^*)$. Then $\varphi \in \AnHierN_1(\PA^*)$. Thus $\Pi_2 \subseteq \AnHierN_1(\PA^*)$, whence $\Pi_2(\PA^*) = \AnHierN_1(\PA^*)$ by above.
    \item Suppose $\varphi = \forall x \vartheta$, where $\vartheta \in \Sigma_{k + 1}$. Let $\tilde\neg \chi$ be $\chi \limp \exists x (\0 \= \fS(\0))$ for formulae $\chi$. Then $\tilde\neg \vartheta \in \Pi_{k + 1}(\PA^*) = \AnHierN_k(\PA^*)$ by induction hypothesis. Let $\psi \in \AnHierN_k$ be equivalent. Then $\PA^* \prf \varphi \liff \forall x \tilde\neg \psi$, where $\forall x \tilde\neg \psi \in \AnHierN_{k + 1}$. Note that this would not work with $\neg$, since $\falsum$ is not a $\AnHierN_0$-formula. So $\Pi_{k + 2} \subseteq \AnHierN_{k + 1}(\PA^*)$ and consequently $\Pi_{k + 2}(\PA^*) = \AnHierN_{k + 1}(\PA^*)$.\qedhere
    \end{itemize}
  \end{proof}
\end{Lma}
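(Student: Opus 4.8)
The plan is to handle the three parts in turn. Part~(1) is a structural induction; once the correct normal forms are isolated, parts~(2) and~(3) come down to routine manipulations in intuitionistic respectively classical arithmetic, the only delicacy being that the class $\AnHierN$ is deliberately meagre --- it contains neither disjunctions nor $\falsum$. For part~(1) I would prove the two inclusions separately. That $\AnHierC_n\subseteq\AnHierC$ for all $n$ follows by induction on $n$: $\AnHierC_0$ is generated from atoms and $\Sigma$-equations under conjunction, and atoms, $\Sigma$-equations, conjunction, universal quantification, and implication with an antecedent already in $\AnHierC$ all preserve the defining features of an almost negative formula (no disjunctions; existential quantifiers only in front of equations). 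For the converse $\AnHierC\subseteq\bigcup_n\AnHierC_n$ I would induct on the construction of an almost negative $\varphi$: if $\varphi$ is atomic or begins with an existential quantifier then, being almost negative, it is an atom or a $\Sigma$-equation, hence in $\AnHierC_0$; conjunctions and universal quantifications of formulae from $\bigcup_n\AnHierC_n$ again lie in a common $\AnHierC_n$ by the telescoping Lemma~\ref{lma:Lambdatelescope}; and if $\varphi=\vartheta\limp\psi$ then $\vartheta$ and $\psi$ are almost negative, so by the induction hypothesis and Lemma~\ref{lma:Lambdatelescope} they lie in a common $\AnHierC_n$, whence $\varphi\in\AnHierC_{n+1}$. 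No disjunction case arises.

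For part~(2) the key point is that in $\LPRA$ and $\LHAP$ the only atoms are equations, so $\AnHierN_0=\Sigge$ literally and $\AnHierN_0(\HA^*)=\Sigge(\HA^*)$; it then remains to identify this with $\Sigma_1(\HA^*)$. Since $\Sigge\subseteq\Sigma_1$ is immediate, the content is $\Sigma_1\subseteq\Sigge(\HA^*)$. I would first show, by structural induction on $\Delta_0$-formulae, that every $\Delta_0$-formula of $\LPRA$ (resp.\ $\LHAP$) is $\HA^*$-provably equivalent to an equation $\chi\=\0$, where $\chi$ is a term for its characteristic function --- a primitive recursive term in $\LPRA$, and in $\LHAP$ a term supplied by Proposition~\ref{prp:coHAP}. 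A $\Sigma_1$-formula $\exists\vec y\,\delta$ is then $\HA^*$-equivalent to $\exists\vec y\,(\chi_\delta\=\0)$, and the block $\exists\vec y$ contracts to a single existential quantifier via a term-definable pairing function (in $\LHAP$ using $\copl$ and $\copr$), landing in $\Sigge$. As every equivalence used here is already intuitionistically valid, running exactly the same argument over $\PA^*\supseteq\HA^*$ yields the final clause of~(3), $\AnHierN_0(\PA^*)=\Sigge(\PA^*)=\Sigma_1(\PA^*)$.

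For the main equality $\AnHierN_n(\PA^*)=\Pi_{n+1}(\PA^*)$ with $n>0$ I would prove the two inclusions by induction on $n$, arguing classically throughout. For $\AnHierN_n\subseteq\Pi_{n+1}(\PA^*)$: in a formula $\forall x\bigwedge_i(\vartheta_i\limp\psi_i)$ one has $\psi_i\in\AnHierN_0\subseteq\Sigma_1$ and $\vartheta_i\in\AnHierN_{n-1}$, which lies in $\Sigma_1$ when $n=1$ and in $\Pi_n(\PA^*)$ when $n>1$ by the induction hypothesis; hence each $\vartheta_i\limp\psi_i$ is classically equivalent to $\neg\vartheta_i\lor\psi_i$, lying in $\Pi_2(\PA^*)$ when $n=1$ and in $\Sigma_n(\PA^*)$ when $n>1$; conjunction preserves this, and prefixing $\forall x$ gives a $\Pi_{n+1}(\PA^*)$-formula. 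For $\Pi_{n+1}(\PA^*)\subseteq\AnHierN_n(\PA^*)$: after contracting the leading block of universal quantifiers to one (again by a pairing term), a $\Pi_{n+1}$-formula may be taken to be $\forall x\,\vartheta$ with $\vartheta\in\Sigma_n$. When $n=1$, part~(2) gives some $\vartheta'\in\AnHierN_0$ with $\PA^*\vdash\vartheta\liff\vartheta'$, so $\forall x\,\vartheta$ is $\PA^*$-equivalent to $\forall x(\exists z\,(\0\=\0)\limp\vartheta')\in\AnHierN_1$. When $n=k+1\ge 2$, let $\tilde\neg\chi$ abbreviate $\chi\limp\exists z\,(\0\=\fS(\0))$; since $\PA^*\vdash\neg\exists z\,(\0\=\fS(\0))$, $\tilde\neg\chi$ is $\PA^*$-equivalent to $\neg\chi$, while $\exists z\,(\0\=\fS(\0))\in\AnHierN_0$ --- so $\tilde\neg$ is a surrogate negation keeping us inside the hierarchy, which $\neg$ would not, since $\falsum\notin\AnHierN_0$. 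Then $\tilde\neg\vartheta$ is $\PA^*$-equivalent to $\neg\vartheta\in\Pi_{k+1}(\PA^*)=\AnHierN_k(\PA^*)$ (induction hypothesis together with the inclusion established above); choosing $\psi\in\AnHierN_k$ with $\PA^*\vdash\psi\liff\tilde\neg\vartheta$, the formula $\forall x\,\tilde\neg\psi\in\AnHierN_{k+1}$ is $\PA^*$-equivalent to $\forall x\,\neg\neg\vartheta$, hence to $\varphi$. Combining the two inclusions closes the induction.

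The step I expect to demand the most care is the inclusion $\Pi_{n+1}(\PA^*)\subseteq\AnHierN_n(\PA^*)$ of part~(3): the usual classical prenexing has to be rerouted through the surrogate negation $\tilde\neg$ and through term-definable pairing to collapse quantifier blocks, and at every stage one must check that the outcome literally matches the prescribed $\AnHierN_n$ template up to $\PA^*$-provable equivalence. The accompanying point of care is verifying that the base languages are expressive enough for all this --- that characteristic functions and pairings are genuinely available as terms, which in $\LHAP$ relies on Proposition~\ref{prp:coHAP} rather than on primitive recursion being built into the language.
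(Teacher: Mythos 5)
Your proposal is correct and follows essentially the same route as the paper's own proof: structural induction with the telescoping lemma for part (1), reduction of $\Delta_0$-formulae to equations for part (2), and the double induction on $n$ with the surrogate negation $\tilde\neg\chi := \chi \limp \exists x\,(\0 \= \fS(\0))$ for part (3). You merely make explicit a few details the paper leaves implicit (characteristic-function terms, pairing to contract quantifier blocks, and the trivial $\Sigge$ antecedent needed to fit $\forall x\,\vartheta'$ into the $\AnHierN_1$ template in the base case).
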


In particular, $\Sigge(\HAP)$ is closed under conjunction, disjunction, existential quantification and bounded universal quantification.

It should be intuitively clear that $\AnHierC_n$ is $\HAP$-equivalent to $\AnHierN_n$. In order to define a satisfaction predicate for $\AnHierC_n$, however, we need to have access to an explicit transformation from $\AnHierC_n$ to its normal form $\AnHierN_n$ in $\HAP$. Corresponding transformations can be defined for languages other than $\LHAP$. Since it is clear that $\AnHierN_n \subseteq \AnHierC_n$, this transformation yields the inclusions $\AnHierN_n \subseteq \AnHierC_n \subseteq \AnHierN_n(\HAP)$ and $\AnHierN_n \subseteq \AnHierC_n \subseteq \AnHierN_n(\PAP)$ for all $n$.

We now fix a standard coding of the language $\LHAP$ in $\HA$, and thus in $\HAP$. The features we will assume are that the set of codes of two types of syntactic objects never intersect, that no two distinct syntactic objects are assigned the same code, that $\0$ is not the code of any syntactic object and that the code of formulae and terms are (numerically) strictly larger than their (immediate) constituents. A similar note on the coding of sequences are that they are assumed to be strictly larger than their length and all of their elements. Thus we can reason about $\AnHier_n$-formulae in a theory of arithmetic.

\begin{Lma}\label{lma:Lambdatransform}
  There are primitive recursive functions $\lambda_n \colon \AnHierC_n \longrightarrow \AnHierN_n$ which transforms $\AnHierC_n$-formulae to \( \HAP \)-provably equivalent normal forms.
  \begin{proof}
    We omit the actual construction, from which we can see that $\HAP \prf \varphi \liff \lambda_n(\varphi)$ for $\varphi \in \AnHierC_n$ but which is otherwise unenlightening.
  \end{proof}
\end{Lma}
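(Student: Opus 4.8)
The plan is to define the functions $\lambda_n$ by recursion on $n$, with each $\lambda_n$ given in turn by recursion on the build-up of $\AnHierC_n$-formulae, and to verify $\HAP\prf\varphi\liff\lambda_n(\varphi)$ by the same double induction. Alongside $\lambda_n$ I would carry two auxiliary primitive recursive operations at each level: a closed formula $\top_n\in\AnHierN_n$ with $\HAP\prf\top_n$ (a ``truth'' at level $n$), and a combinator $\kappa_n\colon\AnHierN_n\times\AnHierN_n\to\AnHierN_n$ with $\HAP\prf\kappa_n(\alpha,\beta)\liff\alpha\land\beta$, witnessing closure of $\AnHierN_n$ under conjunction. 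All renamings of bound variables below are carried out in the fixed primitive recursive manner compatible with the chosen coding (least code with no nested bindings), so identifying $\alpha$-variants is harmless; and by the generating clauses of $\AnHierC_{n+1}$ together with Lemma~\ref{lma:Lambdatelescope}, each $\AnHierC_{n+1}$-formula is, according to its outermost symbol, an atomic formula or $\falsum$ or a $\Sigge$-formula, a conjunction of two $\AnHierC_{n+1}$-formulae, $\forall y\,\varphi_0$ with $\varphi_0\in\AnHierC_{n+1}$, or $\psi\limp\varphi_0$ with $\psi\in\AnHierC_n$ and $\varphi_0\in\AnHierC_{n+1}$.

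At the base level, where $\AnHierN_0=\Sigge$, put $\top_0:=\exists w(\0\=\0)$ and define $\lambda_0$ on $\AnHierC_0$ by: an equation $\chi$ is sent to $\exists w\,\chi$ with $w$ fresh; $\falsum$ to $\exists w(\0\=\fS(\0))$; a $\Sigge$-formula is kept; and $\lambda_0(\alpha\land\beta):=\kappa_0(\lambda_0(\alpha),\lambda_0(\beta))$. The combinator $\kappa_0$ merges two $\Sigge$-formulae into one, first contracting the two existentials through the pairing combinators $\cop,\copl,\copr$ and then fusing the two term-equations into a single one, using $\HAP\prf(a\=b\land c\=d)\liff\cop a c\=\cop b d$ and $\HAP\prf\exists x_1\exists x_2\,\theta\liff\exists w\,\theta(x_1/\copl w,\,x_2/\copr w)$ — both immediate from the defining equations of $\cop,\copl,\copr$ and the strictness axioms. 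The accompanying $\HAP\prf\chi\liff\exists w\,\chi$ for an equation $\chi$ (only using that variables denote) and $\HAP\prf\falsum\liff\exists w(\0\=\fS(\0))$ (using $\0\nonequality\fS(x)$) then yield $\HAP\prf\varphi\liff\lambda_0(\varphi)$, hence $\lambda_0(\varphi)\in\AnHierN_0$, by structural induction.

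For the step, assume $\lambda_n,\kappa_n,\top_n$ in hand, put $\top_{n+1}:=\forall z(\top_n\limp\top_0)\in\AnHierN_{n+1}$ ($\HAP$-provable since $\top_0$ is), and define $\lambda_{n+1}$ on $\AnHierC_{n+1}$ by: (i) if $\varphi$ is an atom, $\falsum$ or a $\Sigge$-formula, $\lambda_{n+1}(\varphi):=\forall z(\top_n\limp\lambda_0(\varphi))$ with $z$ fresh; (ii) $\lambda_{n+1}(\varphi_1\land\varphi_2):=\kappa_{n+1}(\lambda_{n+1}(\varphi_1),\lambda_{n+1}(\varphi_2))$; (iii) for $\forall y\,\varphi_0$, with $\lambda_{n+1}(\varphi_0)=\forall z\,A$, set $\lambda_{n+1}(\forall y\,\varphi_0):=\forall w\,A(y/\copl w,\,z/\copr w)$; (iv) for $\psi\limp\varphi_0$ with $\psi\in\AnHierC_n$, with $\lambda_{n+1}(\varphi_0)=\forall z\,\bigwedge_i(\phi_i\limp\chi_i)$ and $z$ chosen off $\FV(\lambda_n(\psi))$, set $\lambda_{n+1}(\psi\limp\varphi_0):=\forall z\,\bigwedge_i\bigl(\kappa_n(\lambda_n(\psi),\phi_i)\limp\chi_i\bigr)$; and let $\kappa_{n+1}$ merge $\forall z\,A$ and $\forall z'\,B$ into $\forall w\bigl(A(z/w)\land B(z'/w)\bigr)$ for fresh $w$. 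That every output is again in $\AnHierN_{n+1}$ rests on the closure of $\AnHierN_n$ and $\AnHierN_0$ under substitution of terms for variables (a routine induction on $n$) and the closure of $\AnHierN_n$ under $\kappa_n$; and correctness of (i)--(iv) combines the inductive hypothesis for $\lambda_n,\kappa_n,\top_n$ with the intuitionistically valid laws $\alpha\limp\forall x\,\beta\liff\forall x(\alpha\limp\beta)$ for $x\notin\FV(\alpha)$, $\forall x(\beta\land\gamma)\liff(\forall x\,\beta)\land(\forall x\,\gamma)$, $(\alpha\land\beta)\limp\gamma\liff\alpha\limp(\beta\limp\gamma)$, $\alpha\limp(\beta\land\gamma)\liff(\alpha\limp\beta)\land(\alpha\limp\gamma)$, and the pairing contraction $\HAP\prf\forall y\forall z\,\theta\liff\forall w\,\theta(y/\copl w,\,z/\copr w)$. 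Chaining these through the four clauses yields $\HAP\prf\varphi\liff\lambda_{n+1}(\varphi)$.

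There is no real obstacle; the work is bookkeeping, which is why the paper omits it. The points that need care are: verifying that $\AnHierN_n$ is provably closed under the operations actually used (conjunction via $\kappa_n$, prefixing and contracting a universal quantifier, and substitution of terms for variables), so the outputs genuinely land in $\AnHierN_{n+1}$; carrying out every bound-variable renaming by a fixed primitive recursive scheme consistent with the coding, so that $\lambda$ is well defined on codes and is, uniformly in $n$, one primitive recursive map $(\GN{\varphi},n)\mapsto\GN{\lambda_n(\varphi)}$ (extended arbitrarily off $\AnHierC_n$); and a little attention to strictness when the pairing equivalences are applied to possibly non-denoting subterms, which is where the $\LPT$ axioms and the Leibniz Lemma~\ref{lma:Leibniz} are used. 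The one clause carrying any content is (i): an ``atom'' at level $n+1$ must be re-expressed as an implication, and the level-$n$ truth $\top_n$ is exactly the harmless antecedent that makes this possible.
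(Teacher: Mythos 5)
Your construction is correct and is essentially the one the paper leaves implicit: a double recursion (on $n$ and on the build-up of the formula), with pairing combinators contracting stacked quantifiers, currying absorbing a new antecedent into each implication of the normal form via $\kappa_n$, and the provable $\top_n$ padding atoms and $\Sigge$-formulae up to level $n+1$. The points you flag as needing care --- closure of $\AnHierN_n$ under term substitution and $\kappa_n$, strictness when pairing possibly non-denoting terms, and a fixed primitive recursive renaming scheme so the map is well defined on codes uniformly in $n$ --- are exactly the right ones, and your handling of each is sound.
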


The $\lambda_n$ can be chosen to be (provably in $\HAP$) primitive recursive in the structure of their arguments.

We finally turn to the two key properties of the hierarchy.

\begin{Lma}
  \label{lma:PosInsert}
  If $\phi \in \AnHierC_n$ has only strictly positive occurrences of the $k$-ary relation symbol $R$ and $\theta \in \AnHierC_m$ then $\phi(R/\theta) \in \AnHierC_{\max(n,m)}$.
  \begin{proof}
    Let $M = \max(n,m)$. The proof is by induction on the complexity of $\phi$.
    \begin{itemize}
    \item Let $\sigma$ and $\tau$ be terms. Then $(\sigma \= \tau)(R/\theta) = (\sigma \= \tau) \in \AnHierC_M$.
    \item We have $(R(\vec{\tau}))(R/\theta) = \theta(\vec{\fv}_{< k}/\vec{\tau}) \in \AnHierC_M$.
    \item We have $(Q(\vec{\tau}))(R/\theta) = Q(\vec{\tau}) \in \AnHierC_M$.
    \item Let $\sigma$ and $\tau$ be terms. Then $(\exists x \sigma \= \tau)(R/\theta) = (\exists x \sigma \= \tau) \in \AnHierC_M$.
    \item Let $\chi, \psi \in \AnHierC_n$ have only strictly positive occurrences of $R$ and be such that $\chi(R/\theta), \psi(R/\theta) \in \AnHierC_M$. Then $(\chi \land \psi)(R/\theta) = (\chi(R/\theta) \land \psi(R/\theta)) \in \AnHierC_M$.
    \item In case $n \geq 1$, let $\psi \in \AnHierC_n$ have only strictly positive occurrences of $R$ and be such that $\psi(x/z)(R/\theta) \in \AnHierC_M$ for all $z$. Then $(\forall x \psi)(R/\theta) = \forall y (\psi(x/y)(R/\theta)) \in \AnHierC_M$ for some fresh $y$.
    \item In case $n \geq 1$, let $\psi \in \AnHierC_n$ have only strictly positive occurrences of $R$ and be such that $\psi(R/\theta) \in \AnHierC_M$, and $\chi \in \AnHierC_{n - 1}$ be such that $\chi \limp \psi$ has only strictly positive occurrences of $R$. Then $(\chi \limp \psi)(R/\theta) = (\chi \limp \psi(R/\theta)) \in \AnHierC_M$ since $R$ does not occur in $\chi$.
    \end{itemize}
    This concludes the proof.
  \end{proof}
\end{Lma}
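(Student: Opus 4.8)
The plan is to prove the lemma by induction on the generation of $\phi$ as a member of $\AnHierC_n$ (equivalently, a structural induction on $\phi$, since each clause of the definition strictly decreases formula complexity), keeping $M = \max(n,m)$ fixed throughout and invoking the telescoping Lemma~\ref{lma:Lambdatelescope} whenever a subformula naturally sits at a level below $M$. Before starting I would record one preliminary fact: every $\AnHierC_\ell$ is closed under substitution of arbitrary terms for its free variables. This is immediate, since such a substitution introduces no disjunctions, leaves the matrix of every existential quantifier an equation between terms, and is compatible with the renaming of bound variables that the substitution operation performs (recall the stratification is closed under $\alpha$-equivalence). Throughout the induction I would also carry along, as part of the hypothesis, that $R$ occurs only strictly positively in the formula under consideration; the point is that this property transfers to exactly the subformulas to which the induction hypothesis is applied — both conjuncts, the body of a universal quantifier, and the consequent of an implication — which is what makes the argument go through.

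For the generators there is almost nothing to do. If $\phi$ is a $\Sigge$-formula $\exists x\,(\sigma \= \tau)$, or an atomic formula that is not an application of $R$ (an equation, some $Q(\vec{\tau})$ with $Q$ distinct from $R$, or $\falsum$), then $R$ does not occur in $\phi$, so $\phi(R/\theta) = \phi \in \AnHierC_0 \subseteq \AnHierC_M$. If $\phi = R(\vec{\tau})$ with $\vec{\tau}$ of length $k$, then $\phi(R/\theta) = \theta(\vec{\fv}_{< k}/\vec{\tau})$, which lies in $\AnHierC_m \subseteq \AnHierC_M$ by the preliminary fact (and the strict-positivity hypothesis is vacuous here). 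The conjunction case $\phi = \chi \land \psi$ follows at once: both conjuncts inherit strict positivity of $R$ and lie in $\AnHierC_n$, so the induction hypothesis gives $\chi(R/\theta), \psi(R/\theta) \in \AnHierC_M$, hence so is their conjunction. The universal case $\phi = \forall x\,\psi$ (which forces $n \ge 1$) is equally routine: after $\alpha$-renaming $x$ to a fresh $y$, the induction hypothesis applied to $\psi(x/y)$ yields $\psi(x/y)(R/\theta) \in \AnHierC_M$, and since $M \ge 1$ we get $\forall y\,\psi(x/y)(R/\theta) \in \AnHierC_M$.

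The implication case is the heart of the argument and the only place the hypothesis \emph{strictly positive} is genuinely used; I expect it to be the sole real obstacle. Suppose $\phi = \chi \limp \psi$ with $\chi \in \AnHierC_{n-1}$, $\psi \in \AnHierC_n$ (so $n \ge 1$), and $R$ occurring only strictly positively in $\phi$. Since the antecedent of an implication is never a strictly positive position, $R$ cannot occur in $\chi$ at all, so $\chi(R/\theta) = \chi$, while $R$ still occurs only strictly positively in $\psi$; hence $\phi(R/\theta) = \chi \limp \psi(R/\theta)$ and the induction hypothesis gives $\psi(R/\theta) \in \AnHierC_M$. It remains to place the (unchanged) antecedent at the correct level: from $n \le M$ we get $n - 1 \le M - 1$, so Lemma~\ref{lma:Lambdatelescope} yields $\chi \in \AnHierC_{M-1}$; and since $M \ge n \ge 1$, the implication with $\AnHierC_{M-1}$ antecedent and $\AnHierC_M$ consequent is by definition a member of $\AnHierC_M$. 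Everything else in the proof is bookkeeping with $\alpha$-equivalence and with the inclusions $\AnHierC_0 \subseteq \AnHierC_\ell$ and $\AnHierC_{n-1} \subseteq \AnHierC_{M-1}$.
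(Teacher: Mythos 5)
Your proof is correct and follows essentially the same structural induction as the paper's, with the same key observations: closure of $\AnHierC_m$ under term substitution for the $R(\vec{\tau})$ case, and the fact that strict positivity forces $R$ to be absent from the antecedent of any implication. The only difference is that you make explicit two small points the paper leaves implicit (the term-substitution closure fact and the appeal to Lemma~\ref{lma:Lambdatelescope} to place the antecedent in $\AnHierC_{M-1}$), which is harmless.
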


Since $\HAP$ has terms for all primitive recursive functions, there is a term $\fdsub$ for the substitution function in $\LHAP$. This is used to prove the following:

\begin{Lma}[Diagonal Lemma in $\HAP$]\label{lma:Diag}
  $\AnHierC_n$ and $\AnHierN_n$ are closed under diagonalisation: If $\varphi \in \AnHierC_n$ (resp.~$\in \AnHierN_n$) has free variables $\fv_0, \dotsc, \fv_k$, then there is a formula $\psi \in \AnHierC_n$ (\( \psi \in \AnHierN_n \)) with free variables $\fv_0, \dotsc, \fv_{k - 1}$ and
  \begin{align*}
    \HAP \vdash \psi(\vec{x}) \liff \varphi(\vec{x},\GN{\psi})\text,
  \end{align*}
  where $\vec{x}$ has length $k$.
  \begin{proof}
    This is the standard diagonal argument.
  \end{proof}
\end{Lma}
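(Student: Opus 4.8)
The plan is to run the usual self-referential construction while checking at each step that one stays inside $\AnHierC_n$ (resp.\ $\AnHierN_n$); the sole departure from the classical routine is that $\LHAP$-terms may be partial, so throughout I would use terms that provably denote and appeal to the Leibniz Lemma~\ref{lma:Leibniz} in place of the ordinary substitutivity of equality. First I would record the preliminary fact that $\AnHierC_n$ and $\AnHierN_n$ are each closed under substitution of an arbitrary $\LHAP$-term for a free variable (after the customary renaming of bound variables): this is immediate by induction on $n$ and on formula complexity, since a $\Sigge$-formula $\exists x\,\sigma\=\tau$ remains a $\Sigge$-formula, substitution preserves atomicity (and never produces an equation or $\falsum$ from an atom that is neither), and the closure clauses of both hierarchies --- conjunction, universal quantification, and implication with antecedent from the previous level --- are visibly stable.

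For the construction I would take $d$ to be the primitive recursive function sending the canonical code $\GN\chi$ of a formula $\chi$ whose free variables are among $\fv_0,\dots,\fv_k$ to $\GN{\chi(\fv_0,\dots,\fv_{k-1},\GN\chi)}$, and (say) to $\0$ on every other input. Since $\HAP$ has terms for all primitive recursive functions (Proposition~\ref{prp:PRinHAP}) --- in particular the substitution term $\fdsub$ and a term for the numeral function --- I would fix a total $\LHAP$-term $\fd{d}$ with $\HAP\vdash\fd{d}(x)\den$ and $\HAP\vdash\fd{d}(\num e)\=\num{d(e)}$ for all $e\in\N$. Then, given $\varphi\in\AnHierC_n$ (resp.\ $\AnHierN_n$) with free variables among $\fv_0,\dots,\fv_k$, I would set $\vartheta:=\varphi(\fv_k/\fd{d}(\fv_k))$, which by the closure fact lies in the same class, with free variables among $\fv_0,\dots,\fv_k$; writing $g=\GN\vartheta$, I would put $\psi:=\vartheta(\fv_k/\num g)$, which is again in the required class and has free variables among $\fv_0,\dots,\fv_{k-1}$.

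It then remains to verify the fixpoint equivalence. Composing the two substitutions shows that $\psi$ is the formula $\varphi(\fv_k/\fd{d}(\num g))$, while, by the definition of $d$, $d(g)=d(\GN\vartheta)=\GN{\vartheta(\fv_k/\num g)}=\GN\psi$, so $\HAP\vdash\fd{d}(\num g)\=\num{\GN\psi}$. As $\fd{d}(\num g)$ and the numeral $\num{\GN\psi}$ both denote, Lemma~\ref{lma:Leibniz} gives $\HAP\vdash\psi\liff\varphi(\fv_k/\num{\GN\psi})$; since $\varphi(\fv_k/\num{\GN\psi})$ is precisely $\varphi(\vec x,\GN\psi)$ for $\vec x=\fv_0,\dots,\fv_{k-1}$ (with the usual bar-suppressing convention on the right), this is the required $\HAP\vdash\psi(\vec x)\liff\varphi(\vec x,\GN\psi)$, and the case of an arbitrary tuple $\vec x$ of length $k$ follows by substituting into it.

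I do not anticipate a genuine obstacle --- the content is entirely standard. The only points that would demand attention are the closure-under-term-substitution fact, which has to be verified for the rigid normal-form class $\AnHierN_n$ as well (so that a substituted term, even a partial one built from $\app$ and combinators, cannot disturb the prescribed shape of a normal form), the bookkeeping of free variables, and the routine but necessary replacement of the usual substitutivity of equality by the Leibniz Lemma appropriate to the logic of partial terms.
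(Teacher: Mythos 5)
Your proposal is correct and is exactly the ``standard diagonal argument'' that the paper invokes without detail: represent the self-substitution function by a total $\LHAP$-term, substitute it into $\varphi$, diagonalise, and use the Leibniz Lemma to pass from the provably denoting term $\fd{d}(\num g)$ to the numeral $\GN{\psi}$, having first checked that $\AnHierC_n$ and $\AnHierN_n$ are closed under substitution of terms for free variables. The points you flag as needing care (closure of the rigid normal forms under term substitution, totality of the representing term, Leibniz in place of ordinary substitutivity) are precisely the right ones, and none presents an obstacle.
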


\subsection{Satisfaction predicates for almost negative formulae}
\label{sec:SatHAP}

Given terms $\eta$, $\nu$ and $\tau$ we write $\eta_{\nu}^{\tau}$ for the term which is the `update' of the function $\eta$ with the new value $\tau$ for argument $\nu$.
In the notation of section~\ref{sec:HAP}, $\eta_{\nu}^{\tau}$ is the term $\lambda u. \cod u \nu (\cok \tau) \eta u$, where $u$ is fresh. (Recall $\cod$ is the `decision by case' term from Proposition~\ref{prp:coHAP}.)
This term is such that $\HAP \prf \forall x y (\eta_x^{\tau} \app x \simeq \tau \land (x \nonequality y \limp \eta_x^{\tau} \app y \simeq \eta \app y))$.
Note, there is no assumption that \( \tau \) denotes.
For sequences \( \vec \tau \) and \( \vec \nu \) of terms (of equal length) we define $e_{\vec{\nu}}^{\vec{\tau}}$ by iterating the above construction.

Since partial combinatory algebras are Turing complete (see Proposition \ref{prp:coHAP} and \cite[VI.\ Theorem 2.8.1]{Beeson:1985}) it is possible to define a valuation function for partial terms of $\LHAP$.

\begin{Def}[Valuation function in $\HAP$\label{def:val}]
  Using Proposition \ref{prp:PRinHAP} we have formulae and closed terms corresponding to the following primitive recursive relations (which will use the usual notation for applying to their arguments) and functions: $\fdSeq$, $\fdlh$, $\fdVar$, $\fdFV$, $\fdBV$ (the second argument is a (code of a) free/bound variable of the first), $\fdTerm$, 
  $\fdsub$, 
  $\fdAnHierN[n]$ and $\fdlambda_n$.

  Let $\fdval$ be a partial term in $\HAP$ such that $\HAP \prf \fdval \app e \app \GN{\f{c}} \= \f{c}$ for every constant symbol $\f{c}$, $\HAP \prf \fdVar(x) \limp \fdval \app e \app x \simeq e \app x$, and $\HAP \prf \fdval \app e \app (\GN{\f{f}}(\vec{a})) \simeq \f{f}(\overrightarrow{\fdval \app e \app a})$ for every function symbol $\f{f}$ (recall the notational convention that $\GN{\alpha}$ means $\num{\GN{\alpha}}$, the numeral of the Gödel code of $\alpha$, for expressions $\alpha$).
\end{Def}

These defined relations will notationally be treated as predicate symbols.

Recall that in $\HAP$ every object in the domain is a partial function and, in particular, is a partial variable assignment mapping codes of variables to values.
Thus it is not necessary to place any restriction on the $e$ in $\fdval \app e \app z$ such as, for example, that \( e \) represents an assignment for the variables of (the term coded by the value of the variable) $z$.

\begin{Lma}\label{lma:val}
  The valuation function $\fdval$ acts as a partial valuation function for partial terms of $\LHAP$. In particular:
  \begin{enumerate}
  \item $\HAP \prf \forall e, f, t ((\fdTerm(t) \land \forall x < t (\fdFV(t,x) \limp e \app x \simeq f \app x)) \limp \fdval \app e \app t \simeq \fdval \app f \app t)$.
  \item $\HAP \prf \forall e, t, v, x (\fdTerm(t) \land \neg\fdFV(t,v) \limp \fdval \app e_v^x \app t \simeq \fdval \app e \app t)$.
  \item $\HAP \prf \forall e, s, t, x (\fdVar(x) \land \fdTerm(t) \land \fdTerm(s) \limp \fdval \app e \app (\fdsub \app t \app x \app s) \simeq \fdval \app e_x^{\fdval \app e \app s} \app t)$.
  \item $\HAP \prf \forall e (\fdval \app e \app \GN{\tau} \simeq \tau(e \app \GN{\fv_0} , \dotsc, e \app \GN{\fv_{n-1}}))$ for every partial term $\tau$ with free variables among $\fv_0,\dotsc,\fv_{n - 1}$.
  \end{enumerate}
  \begin{proof}
    The first three are proved by induction on $t$ in $\HAP$. The final claim is an induction on the construction of $\tau$.
  \end{proof}
\end{Lma}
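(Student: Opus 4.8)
The plan is to prove items~(1)--(3) by course-of-values induction on $t$ inside $\HAP$, following the recursion by which $\fdval$ is defined, and item~(4) by an external induction on the build-up of the concrete term $\tau$. The structural facts I rely on are: $\fdval$ is defined by recursion on codes of terms, which is legitimate because $\HAP$ has the fixed-point combinator $\cofix$ of Proposition~\ref{prp:coHAP}; the coding of $\LHAP$ is chosen so that a compound term code $\GN{\f{f}}(\vec a)$ is numerically strictly larger than each of its immediate subterm codes $a_i$; and the three classes of term codes --- codes of variables, of constants, and of function applications --- are pairwise disjoint, so that the constant, variable and function-symbol clauses of Definition~\ref{def:val} determine $\fdval\app e\app t$ unambiguously for every $t$ with $\fdTerm(t)$, by case analysis on $t$ and recursion in the function-symbol case. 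Course-of-values induction on $t$ is available in $\HAP$ via the schema {\color{darkred}$\HAP$}\ref{HAP:ind} and the primitive recursive apparatus of Proposition~\ref{prp:PRinHAP}.

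For~(1): if $t$ is a variable code $x$, both sides reduce by the variable clause to $e\app x$ and $f\app x$, which are weakly equal since $\fdFV(t,x)$ and $x<t$; if $t$ is a constant code, both reduce to that constant by the constant clause; and if $t=\GN{\f{f}}(\vec a)$, the hypothesis relativises to each $a_i$ (any $x<a_i$ with $\fdFV(a_i,x)$ has $\fdFV(t,x)$ and, as $a_i<t$, also $x<t$), so the induction hypothesis gives $\fdval\app e\app a_i\simeq\fdval\app f\app a_i$, and the weak-equality form of the Leibniz Lemma~\ref{lma:Leibniz} for $\f{f}$ lifts this to $\f{f}(\overrightarrow{\fdval\app e\app a})\simeq\f{f}(\overrightarrow{\fdval\app f\app a})$, i.e.\ to $\fdval\app e\app t\simeq\fdval\app f\app t$. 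Item~(2) is then immediate from~(1): if $\neg\fdFV(t,v)$, every $y$ with $\fdFV(t,y)$ has $v\nonequality y$, whence $e_v^x\app y\simeq e\app y$ by the defining identity of the update operation, and~(1) applies with $f:=e_v^x$. Item~(3), the substitution lemma, runs by the same induction on $t$, now using the defining equations of the substitution term $\fdsub$ --- so $\fdsub\app t\app x\app s$ is $s$ when $t$ is $x$, is $t$ when $t$ is another variable or a constant, and is $\GN{\f{f}}(\overrightarrow{\fdsub\app a\app x\app s})$ when $t=\GN{\f{f}}(\vec a)$ --- together with the update identities $e_x^{\fdval\app e\app s}\app x\simeq\fdval\app e\app s$ and $x\nonequality y\limp e_x^{\fdval\app e\app s}\app y\simeq e\app y$; the function-symbol case again closes with the Leibniz congruence for $\f{f}$. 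In all three one argues purely with $\simeq$ and never asserts that a valuation denotes, which is exactly what keeps the argument valid when $\fdval\app e\app t$ is undefined.

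For~(4), fix $\tau$ with free variables among $\fv_0,\dots,\fv_{n-1}$ and argue by external induction on $\tau$, each instance being a separate theorem of $\HAP$. If $\tau$ is a constant, $\fdval\app e\app\GN{\tau}$ equals that constant by the constant clause and so does $\tau(e\app\GN{\fv_0},\dots,e\app\GN{\fv_{n-1}})$; if $\tau$ is $\fv_i$, the variable clause gives $\fdval\app e\app\GN{\tau}\simeq e\app\GN{\fv_i}$, which is exactly that substitution instance; and if $\tau=\f{f}(\tau_1,\dots,\tau_k)$, the function-symbol clause gives $\fdval\app e\app\GN{\tau}\simeq\f{f}(\fdval\app e\app\GN{\tau_1},\dots,\fdval\app e\app\GN{\tau_k})$, whereupon the induction hypotheses for the $\tau_i$ (whose free variables remain among $\fv_0,\dots,\fv_{n-1}$) and the Leibniz congruence for $\f{f}$ rewrite the right-hand side to $\tau(e\app\GN{\fv_0},\dots,e\app\GN{\fv_{n-1}})$.

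I expect no conceptual obstacle, only bookkeeping, and the two points that demand care are the ones already flagged. First, the course-of-values recursion must be squared with the coding conventions: subterm codes strictly smaller, and the three classes of term codes pairwise disjoint, so that the internal case analysis is both legitimate and exhaustive. Second, one must be scrupulous about \emph{weak equality} and partiality --- using only the partiality-robust facts (the $\cofix$-unfolding of $\fdval$, the update identities for $e_v^x$, which hold even when the stored value diverges, and the weak-equality form of the Leibniz Lemma) and never silently treating $\fdval\app e\app t$ as total --- since the very possibility that a valuation diverges is why this lemma is proved over $\HAP$ rather than $\HA$.
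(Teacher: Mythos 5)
Your proposal is correct and follows exactly the route the paper takes: items (1)--(3) by induction on $t$ inside $\HAP$ (exploiting the coding conventions and arguing throughout with weak equality $\simeq$), and item (4) by an external induction on the structure of $\tau$, with (2) reduced to (1) via the update identities. The paper's own proof is just a one-line statement of this induction scheme, so your write-up is a faithful elaboration of the intended argument.
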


In the interest of readability we will conflate coding and concatenation in the coded language. Any logical symbols occurring within square brackets $[\phantom{\exists}]$ refers to the codes of these symbols, concatenated with the other (named) constructs appearing in the brackets. For instance, $[\exists v (s \= t)]$ below is the code $\GN{\exists \fv_0 (\fv_1 \= \fv_2)}$ with the codes $v$, $s$, $t$ formally substituted for $\GN{\fv_0}$, $\GN{\fv_1}$, $\GN{\fv_2}$ respectively.

\begin{Thm}[Partial Satisfaction Predicates in $\HAP$\label{thm:Sat}]
  For every $n$ there is a $\AnHierN_n$-formula $\fdSat_n(e,F)$ which $\HAP$ proves is a compositional satisfaction predicate for $\AnHierC_n$-formulae.
  \begin{proof}
    The proof is cumbersome but straightforward. Essentially, we mimic standard constructions of $\Sigma_n$-satisfaction predicates in $\PA$, as in e.g.\ Hájek and Pudlák's \cite{Hajek_Pudlak:1993}.

    We first construct an intermediate satisfaction predicate for $\AnHierN_n$-formulae in $\LHAP$ by induction on $n$.
    \begin{itemize}
    \item First $\fdSat_0^{\prime\prime}(e,F)$ is
      \begin{align*}
        \exists x,v,s,t(\fdTerm(s) \land \fdTerm(t) \land \fdVar(v) \land F \= [\exists v (s = t)] \land \fdval \app e_v^x \app s\= \fdval \app e_v^x \app t)
      \end{align*}
      This is straightforwardly $\HAP$-equivalent to a $\Sigge$-formula. The predicate $\fdSat^{\prime}_0$ is thus the $\AnHierN_0$ normal form of this formula, which is $\HAP$-equivalent to it by Lemma \ref{lma:Lambdatransform}.
    \item We define $\fdSat^{\prime\prime}_{n + 1}(e,F)$ as
      \begin{align*}
        \fdAnHierN[n + 1](F) &\land \forall x,v,G,s (\fdVar(v) \land F \= [\forall v G]\\*
        &\qquad \land \fdSeq(s) \land \fdlh \app s > \0 \land G \= \fdconjs \app s\\*
        &\quad \limp \forall i < \fdlh \app s\, \forall f,g < s_i
        (s_i \= [g \limp f] \land \fdAnHierN[n](g) \land \fdAnHierN[0](f)\\*
        &\qquad \limp (\fdSat^{\prime}_n(e_v^x,g) \limp \fdSat^{\prime}_0(e_v^x,f))))
        \intertext{where $\fdconjs$ is the $\HAP$-term representing the primitive recursive function}
        \conjs(\langle e \rangle) &= e\\
        \conjs(\langle h, t\rangle) &= \conjs(t) \land h\text,
      \end{align*}
      in accordance with Proposition \ref{prp:PRinHAP}.

      $\fdSat^{\prime\prime}_{n + 1}$ is $\AnHierC_{n + 1}$, whence $\fdSat^{\prime}_{n + 1}$ is the $\AnHierN_{n + 1}$-formula $\lambda_{n + 1}(\fdSat^{\prime\prime}_{n + 1})$.
    \end{itemize}

    We now define $\fdSat_n(e,F)$ as $\fdSat^{\prime}_n(e,\lambda_n \app F)$, which is compositional on $\AnHierC_n$-formulae. This, and the intermediate facts that $\fdSat^{\prime}_n$ respects conjunction, only depends on the value of $e$ for the free variables of $F$ and `commutes' with substitution, that is
    \begin{align*}
      \HAP &\prf \forall F,G,e (\fdAnHierN[n](F) \land \fdAnHierN[n](G) \limp (\fdSat^{\prime}_n(e,\fdlambda_n([F \land G])) \liff \fdSat^{\prime}_n(e,F) \land \fdSat^{\prime}_n(e,G)))\text,\\
      \HAP &\prf \forall F,e,f (\fdAnHierN[n](F) \land \forall x < F (\fdFV(F,x) \limp e \app x \= f\app x) \limp (\fdSat^{\prime}_n(e,F) \liff \fdSat^{\prime}_n(f,F)))
      \intertext{and}
      \HAP &\prf \forall F,x,s,e (\fdAnHierN[n](F) \land \fdVar(x) \land \fdTerm(s) \land (\forall v < F \neg(\fdBV(F,v) \land \fdFV(s,v))))\\
      &\qquad \limp (\fdSat^{\prime}_n(e,\fdsub \app F \app x \app s) \liff \fdSat_n^{\prime}(e_x^{\fdval \app e \app s},F))\text,
    \end{align*}
    are now shown by induction on $n$ as before, with inner inductions on the structure of $F$.
  \end{proof}
\end{Thm}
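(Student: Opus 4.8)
The plan is to construct, for each $n$, a $\AnHierN_n$-formula $\fdSat_n(e,F)$ and verify that $\HAP$ proves the compositionality clauses of a satisfaction predicate for $\AnHierC_n$-formulae, namely that $\fdSat_n$ commutes with the formula constructors allowed at level $n$ (atoms and $\Sigge$-formulae via $\fdval$, conjunction, universal quantification, and implication with $\AnHierC_{n-1}$-antecedent), when the second argument ranges over (codes of) $\AnHierC_n$-formulae and the first over variable assignments. Because $\AnHierC_n$ is not literally closed under the normal-form operations, I would factor the construction through the normal-form hierarchy: first build an auxiliary predicate $\fdSat'_n$ tailored to $\AnHierN_n$-formulae, then set $\fdSat_n(e,F) := \fdSat'_n(e,\fdlambda_n \app F)$, using the primitive recursive normalisation term $\fdlambda_n$ from Lemma~\ref{lma:Lambdatransform} (which is available as a closed term of $\LHAP$ by Proposition~\ref{prp:PRinHAP}) to first bring an arbitrary $\AnHierC_n$-formula into normal form. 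The base case $\fdSat'_0$ is essentially the $\fdval$-based satisfaction condition for $\Sigge$-formulae (an existential quantifier in front of a term equation, evaluated under the updated assignment $e_v^x$), together with a clause handling atomic non-equational atoms; this is manifestly $\HAP$-equivalent to a $\Sigge$-formula, and one then replaces it by its $\AnHierN_0$ normal form via $\fdlambda_0$. For the successor step, $\fdSat'_{n+1}$ says: $F$ is (a code of) a universally quantified conjunction of implications $g\limp f$ with $g$ a $\AnHierN_n$-code and $f$ a $\AnHierN_0$-code, and for every value $x$ of the quantified variable and every conjunct, $\fdSat'_n(e_v^x,g)$ implies $\fdSat'_0(e_v^x,f)$ — a formula which is patently $\AnHierC_{n+1}$ (the inner $\fdSat'_n$ sits in the antecedent of an implication whose consequent is the $\AnHierN_0$-formula $\fdSat'_0$, all under a universal quantifier and a conjunction), and which one then normalises with $\fdlambda_{n+1}$ to land in $\AnHierN_{n+1}$.

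Next I would establish, by induction on $n$ with an inner induction on the structure of $F$ inside $\HAP$, the three auxiliary invariants that make $\fdSat'_n$ behave like a satisfaction predicate: (i) it respects conjunction, $\fdSat'_n(e,\fdlambda_n([F\land G])) \liff \fdSat'_n(e,F)\land\fdSat'_n(e,G)$ for $\AnHierN_n$-codes $F,G$; (ii) it depends only on the values of $e$ at the free variables of $F$, which is where Lemma~\ref{lma:val}(1) feeds in at the base case; and (iii) it commutes with substitution, $\fdSat'_n(e,\fdsub\app F\app x\app s)\liff\fdSat'_n(e_x^{\fdval\app e\app s},F)$ under the obvious capture-avoidance side condition, using Lemma~\ref{lma:val}(3) at the base. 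These three facts are exactly what is needed to verify the compositional clauses for $\fdSat_n$ on $\AnHierC_n$: the conjunction and universal-quantifier clauses reduce to (i) and to the definition of $\fdSat'_{n+1}$ combined with (iii) (to push the outer $\forall$ through the assignment update $e_v^x$), and the implication clause $\fdSat_{n+1}(e,\GN{\psi\limp\varphi})\liff(\fdSat_n(e,\GN\psi)\limp\fdSat_n(e,\GN\varphi))$ for $\psi\in\AnHierC_n$, $\varphi\in\AnHierC_{n+1}$ follows once one checks that $\fdlambda_{n+1}$ distributes appropriately over implications and conjunctions modulo $\HAP$-provable equivalence of $\fdSat'$; the correctness of $\fdlambda_n$ itself ($\HAP\prf \vartheta\liff\fdlambda_n(\vartheta)$) is Lemma~\ref{lma:Lambdatransform}, internalised as a statement about codes.

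The main obstacle, as the authors' own remark ("cumbersome but straightforward") signals, is bookkeeping rather than conceptual: one must pin down a concrete internal representation of normal forms (sequences of conjuncts, each an implication-pair) and verify that all the primitive recursive auxiliary functions — $\fdSeq$, $\fdlh$, $\fdconjs$, $\fdsub$, $\fdAnHierN[n]$, $\fdlambda_n$ — satisfy in $\HAP$ the recursion equations one expects, then carry the nested inductions (outer on $n$, inner on the code of $F$) through with the assignment-update terms $e_v^x$ and their provable behaviour. A delicate point worth isolating is the interaction between the \emph{syntactic} level count and the logical complexity of $\fdSat'_n$: one must confirm that introducing $\fdSat'_n$ in the antecedent of the $(n+1)$-st clause does not raise the level, i.e.\ that $\fdSat'_n$ is genuinely (equivalent to) a $\AnHierN_n$-formula and not merely $\AnHierC_n$, so that the implication $\fdSat'_n(e_v^x,g)\limp\fdSat'_0(e_v^x,f)$ is admissible as a building block of a $\AnHierC_{n+1}$-formula — this is why the normalisation step via $\fdlambda_n$ is applied at \emph{every} level and not just at the end. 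I would also take care that the capture-avoidance side conditions on $\fdsub$ (the $\fdBV$/$\fdFV$ disjointness hypothesis appearing in the substitution invariant) are exactly what the coding of $\alpha$-equivalence classes from Section~\ref{sec:prel} makes harmless, so that the satisfaction clauses hold for the canonical codes of $\AnHierC_n$-formulae without extra hypotheses.
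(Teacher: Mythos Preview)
Your proposal is correct and follows essentially the same approach as the paper's proof: factor through the normal-form hierarchy by building auxiliary predicates $\fdSat'_n$ for $\AnHierN_n$, normalise at each level via $\fdlambda_n$ to keep the complexity bounded, set $\fdSat_n(e,F) = \fdSat'_n(e,\fdlambda_n \app F)$, and establish compositionality via the three invariants (conjunction, free-variable dependence, substitution) by an outer induction on $n$ with inner induction on the structure of $F$. Your explicit identification of why the per-level normalisation matters (so that $\fdSat'_n$ is genuinely $\AnHierN_n$ and can sit in the antecedent at level $n+1$) makes a point the paper leaves implicit; the only superfluous item is the clause for non-equational atoms, which is unnecessary in $\LHAP$ since $\AnHierN_0 = \Sigge$ there.
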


\begin{Cor}\label{cor:Sat}
  Let $\phi$ be a $\AnHierC_n$ formula with free variables among $\fv_0,\dotsc,\fv_{k - 1}$. Then $\HAP \prf \forall e ( \fdSat_n(e,\GN{\phi}) \liff \phi(e \app \GN{\fv_0}, \dotsc , e \app \GN{\fv_{k-1}} ))$.
  \begin{proof}
    Outer induction on $n$, inner induction on $\phi$.
  \end{proof}
\end{Cor}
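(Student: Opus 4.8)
The plan is to argue by an outer induction on $n$ and, for each $n$, an inner induction on the structure of the $\AnHierC_n$-formula $\phi$, using as the main engine the fact from Theorem~\ref{thm:Sat} that $\HAP$ proves $\fdSat_n$ to be \emph{compositional} on $\AnHierC_n$-formulae, together with the valuation facts of Lemma~\ref{lma:val}. Throughout write $\phi^{e}$ for $\phi(e\app\GN{\fv_0},\dotsc,e\app\GN{\fv_{k-1}})$. The outer induction is genuinely needed: the compositionality clause for an implication $\chi \limp \psi \in \AnHierC_n$ (with $\chi \in \AnHierC_{n-1}$) expresses $\fdSat_n$ of $\GN{\chi\limp\psi}$ through $\fdSat_{n-1}$ of $\GN{\chi}$ and $\fdSat_n$ of $\GN{\psi}$, dropping a level on the antecedent.

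For the base cases — $\phi$ an equation, $\falsum$, or a $\Sigge$-formula $\exists x\,(\sigma\=\tau)$ — one uses that $\fdSat_n(e,F) = \fdSat^{\prime}_n(e,\fdlambda_n\app F)$ and that $\fdlambda_n$ represents the primitive recursive normalisation function $\lambda_n$ of Lemma~\ref{lma:Lambdatransform}, so that Proposition~\ref{prp:PRinHAP} gives $\HAP\prf\fdlambda_n\app\GN{\phi}\=\GN{\lambda_n(\phi)}$ with $\HAP\prf\phi\liff\lambda_n(\phi)$; it then suffices to unfold $\fdSat^{\prime\prime}_0$ on $\GN{\lambda_0(\phi)}$, which after using injectivity of the coding to identify the recorded variable and term codes reduces — via Lemma~\ref{lma:val}(4) applied to $e_{\GN{x}}^{x'}$ — to the $\Sigge$-formula $\phi^{e}$; the strictness axioms of $\LPT$ reconcile the weak equality appearing in $\fdval$ with the strong equality in the formula, since either side of a $\=$ is forced to denote.

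For the inductive cases, conjunction is immediate from compositionality and the inner induction hypothesis. For $\forall x\,\psi$ with $\psi \in \AnHierC_n$, compositionality gives $\HAP\prf \fdSat_n(e,\GN{\forall x\,\psi}) \liff \forall y\,\fdSat_n(e_{\GN{x}}^{y},\GN{\psi})$; applying the inner induction hypothesis to $\psi$ and reading $e_{\GN{x}}^{y}$ as the assignment sending $\GN{x}$ to $y$ and each other $\GN{\fv_j}$ to $e\app\GN{\fv_j}$, the right-hand side is $\forall y\,\psi$ evaluated under $e$ with $y$ in the $x$-slot, i.e.\ $(\forall x\,\psi)^{e}$ up to the $\alpha$-renaming $\forall x \mapsto \forall y$. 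For an implication $\chi\limp\psi \in \AnHierC_n$ with $\chi\in\AnHierC_{n-1}$, compositionality gives $\HAP\prf\fdSat_n(e,\GN{\chi\limp\psi})\liff(\fdSat_{n-1}(e,\GN{\chi})\limp\fdSat_n(e,\GN{\psi}))$, and now the \emph{outer} induction hypothesis on $\chi$ and the inner one on $\psi$ finish the case. The main obstacle is not conceptual but the bookkeeping, already present in Theorem~\ref{thm:Sat}: matching the coded assignment update $e_{\GN{x}}^{y}$ to the metalevel substitution $\psi^{e}$ requires care with the bound variable of $\forall x$ — handled by the no-nested-bindings convention and the $\alpha$-invariance built into the coding and into Lemma~\ref{lma:val} — and with the denotation side-conditions inherited from Beeson's logic of partial terms, which is precisely what makes the argument ``cumbersome but straightforward''.
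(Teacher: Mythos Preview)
Your proposal is correct and is precisely the approach the paper indicates: an outer induction on $n$ with an inner structural induction on $\phi$, driven by the compositionality of $\fdSat_n$ from Theorem~\ref{thm:Sat} and the valuation facts of Lemma~\ref{lma:val}, with the outer hypothesis invoked exactly at the implication case for the $\AnHierC_{n-1}$ antecedent. You have simply unpacked the paper's one-line hint in appropriate detail.
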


\section{Fixpoint theories}
\label{sec:Fix}

We return now to our main item of study: the intuitionistic theory of fixpoints over Heyting arithmetic, $\IID1$. Similar to the way $\HA$ is included in $\HAP$ we want to embed $\IID1$ in a corresponding theory in the logic of partial terms, extending the notion of strictly positive fixpoints to $\HAP$. We denote this theory as $\IIDP1$. We start by formally introducing these fixpoint theories.

\subsection[$\IID1$ and $\IIDP1$]{$\bIID1$ and $\bIIDP1$}
\label{sec:IDP}

Let $\L$ be a fixed language extending $\LHA$. Let $P = (P_n)_{n \in \N}$ be a sequence of new predicate symbols such that \( P_n \) has arity \( n \).
Let $\L(P_n)$ be the language $\L$ with $P_n$ added and $\LP$ that of $\L$ with all $P_n$ added.
We will by $\POS_{P_n}(\L)$ denote the set of formulae in \( \L(P_n) \) whose free variables are \emph{exactly} $\fv_0, \dotsc, \fv_{n - 1}$  and such that $P_n$ occurs at least once in the formula and in strictly positive position only. That is, \( P_n \) may not occur in the antecedent of any implication.
Let $\POS_P(\L) = \bigcup_{n \in \N} \POS_{P_n}(\L)$. Note that this definition ensures that $\POS_P(\L)$ is a union of disjoint classes, so exactly one of the $P_n$s occur in any formula of $\POS_P(\L)$; the requirement that $P_n$ actually occur is no real restriction on the $\POS_{P_n}(\L)$. Formulae of $\POS_P(\L)$ will be used as operators with $P_n$ as parameters to generate (axioms for) strictly positive fixpoints, the \emph{strictly positive operator forms (in $\L$)} from the introduction. Now we can formally define $\IID1$ as follows: For each $\Phi \in \POS_{P_n}(\L)$ we introduce a new $n$-ary predicate $\I_{\Phi}$ and the axiom
\begin{align}\label{eq:IIDAx}
  \forall \vec{x} (\I_{\Phi}(\vec{x}) \liff \Phi(\I_{\Phi};\vec{x}))\text. \tag{$\Ax_{\Phi}$}
\end{align}
Similarly to above, we will by $\L(\I_{\Phi})$ denote the language $\L$ extended with the (specific) symbol $\I_{\Phi}$ and by $\LID$ that of $\L$ with $\I_{\Phi}$ added for every $\Phi \in \POS_P(\L)$. Then $\IID1(\Gamma)$ is $\HA(\LID)$ plus all \eqref{eq:IIDAx} for $\Phi \in \POS_P(\L) \cap \Gamma$, where $\Gamma$ is some class of $\LP$-formulae. $\ID1(\Gamma)$ is $\IID1(\Gamma) + \TND$ (or equivalently, $\PA(\LID) + \{\Ax_{\Phi} \sep \Phi \in \POS_P(\L) \cap \Gamma\}$). Often we will use this for classes $\Gamma$ whose exact definition depends on the ambient language (like $\ID1(\Pi_2)$ in the introduction), whence it is tacitly understood that the definition used for $\Gamma$ is the one appropriate for $\LP$ (i.e.\ $\Pi_2$ here means all $\Pi_2$-formulae in the language $\LP$); if there is reason to avoid confusion we will write $\Gamma(\LP)$ for this set. In case $\Gamma$ is all of $\LP$, and/or the language $\L$ is evident, we will often drop them from the notations. Thus $\IID1$ is $\IID1(\LPHA)$.

We now extend this definition to fixpoint theories over $\HAP$.

\begin{Def}\label{def:IIDP}
  The theory $\IIDP1$ is the $\LIDHAP$-theory in $\LPT$ axiomatised by
  \begin{enumerate}
  \item the axioms of $\HAP(\LIDHAP)$,\label{ax:IIDP1}
  \item the schema \eqref{eq:IIDAx} for all $\Phi \in \POS_P(\LHAP)$.\label{ax:IIDP3}
  \end{enumerate}
  The theory $\IIDP1(\Gamma)$, where $\Gamma$ is a subset of $\LPHAP$, is axiomatised by clause \ref{ax:IIDP1} above, and
  \begin{enumerate}\setcounter{enumi}2
  \item the schema \eqref{eq:IIDAx} for all $\Phi \in \POS_P(\LHAP) \cap \Gamma$.
  \end{enumerate}
\end{Def}

It follows from these definitions that the language of $\IID1$ is contained in that of $\IIDP1$; $\LIDHA \subset \LIDHAP$. Given Lemma \ref{lma:HA->HAP}, we can in fact make a stronger claim.

\begin{Lma}\label{lma:ID->IDP}
  $\IIDP1$ proves every theorem of $\IID1$.
  \begin{proof}
    By Lemma \ref{lma:HA->HAP} $\HAP(\LIDHAP) \prf \HA(\LIDHA)$. Hence by the same lemma all we need to check are the fixpoint axioms of $\IID1$. Let $\I_{\Phi}(\vec{\fv}) \liff \Phi(\I_{\Phi};\vec{\fv})$ be such an axiom. Since this is literally an axiom of $\IIDP1$ we are done.
  \end{proof}
\end{Lma}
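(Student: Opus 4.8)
The plan is to read off this statement as an instance of Lemma~\ref{lma:HA->HAP}. I would apply that lemma with $\L_1 = \LIDHA$ and $\L_2 = \LIDHAP$, taking $\Gamma_1$ to be the axiom set of $\IID1$ and $\Gamma_2$ the axiom set of $\IIDP1$. First note that $\POS_P(\LHA) \subseteq \POS_P(\LHAP)$ — the class $\POS_P$ depends only on the ambient language being an extension of $\LHA$ — so $\LIDHA \subseteq \LIDHAP$, and the inclusion hypothesis of the lemma holds. Since $\IID1$ is a theory over intuitionistic predicate logic, ``$\varphi$ is a theorem of $\IID1$'' is precisely ``$\Gamma_1 \prf_{\mathrm{I}} \varphi$'' in the notation of Lemma~\ref{lma:HA->HAP}. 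It therefore remains to check the two remaining hypotheses of that lemma: that $\IIDP1 \prf \sigma\den$ for every term $\sigma$ of $\LIDHA$, and that $\IIDP1 \prf \gamma$ for every $\gamma \in \Gamma_1$.

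For the denotation hypothesis: the symbols $\I_\Phi$ are predicate symbols and add no function symbols, so every $\LIDHA$-term is arithmetical, built from variables, $\0$, $\fS$, $+$ and $\times$; from the denotation axioms \HAP\ref{HAP:arden} and the strictness axioms \LPT\ref{ax:StVar} and \LPT\ref{ax:StC} one gets $\IIDP1 \prf \sigma\den$ by a routine induction on the structure of $\sigma$. For the axiom hypothesis, split $\Gamma_1$ into the axioms of $\HA(\LIDHA)$ and the fixpoint axioms~\eqref{eq:IIDAx} for $\Phi \in \POS_P(\LHA)$. Each of the latter is, verbatim, an axiom of $\IIDP1$ by clause~\ref{ax:IIDP3} of Definition~\ref{def:IIDP}, because $\POS_P(\LHA) \subseteq \POS_P(\LHAP)$ and both the operator form $\Phi$ and the symbol $\I_\Phi$ are literally the same object in the larger language. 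For the $\HA(\LIDHA)$-axioms: the successor axioms and the defining equations for $+$ and $\times$ are among \HAP\ref{HAP:arS}--\HAP\ref{HAP:arM}, hence axioms of $\HAP(\LIDHAP) \subseteq \IIDP1$, while every $\LIDHA$-instance of the induction schema is an $\LIDHAP$-instance of \HAP\ref{HAP:ind} since $\LIDHA \subseteq \LIDHAP$; so $\IIDP1$ proves all of $\HA(\LIDHA)$. (Equivalently, this is the relativization of Corollary~\ref{cor:HAP|-HA} obtained by one prior application of Lemma~\ref{lma:HA->HAP} with $\L_1 = \LIDHA$, $\L_2 = \LIDHAP$.) With both hypotheses verified, Lemma~\ref{lma:HA->HAP} delivers $\IIDP1 \prf \varphi$ whenever $\IID1 \prf \varphi$.

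I do not anticipate a real obstacle: the whole argument is bookkeeping that matches axioms across the two languages, with all the genuine logical content handed off to Lemma~\ref{lma:HA->HAP}. The only step deserving a second look is the transfer of the $\HA(\LIDHA)$-part, in particular that enlarging the induction schema from $\LHA$ to $\LIDHAP$ causes no trouble; but this is immediate from $\LIDHA \subseteq \LIDHAP$, and the denotation facts needed to invoke Lemma~\ref{lma:HA->HAP} are confined to the purely arithmetical fragment, where $\HAP$ already supplies them. No feature of the $\I_\Phi$ beyond their being predicate symbols is used — in particular the schematic strictness axiom \LPT\ref{ax:StRel} already covers them in the $\LPT$-theory $\IIDP1$.
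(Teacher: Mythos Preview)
Your proposal is correct and follows essentially the same route as the paper: both reduce the claim to an application of Lemma~\ref{lma:HA->HAP} with $\L_1 = \LIDHA$, $\L_2 = \LIDHAP$, observing that the fixpoint axioms of $\IID1$ are literally axioms of $\IIDP1$ and that the $\HA(\LIDHA)$-part transfers to $\HAP(\LIDHAP)$. You simply spell out more of the bookkeeping (the denotation hypothesis for arithmetic terms, the inclusion $\POS_P(\LHA) \subseteq \POS_P(\LHAP)$) that the paper leaves implicit.
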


The corresponding lemma holds for $\IID1(\Gamma)$ and $\IIDP1(\Gamma)$ if $\Gamma$ is a set of formulae (but recall that e.g.\ $\AnHierC$ and $\AnHierC_n$ below denote different sets in $\LPHA$ and $\LPHAP$).

\subsection[Partial realizability interpretation of $\IIDP1$]{Partial realizability interpretation of $\bIIDP1$}
\label{sec:rIID}

The notion of realizability we are employing (section \ref{sec:HAPr}) syntactically transforms a formula into a negative formula (Lemma \ref{lma:reaFV}). This transformation also preserves strictly positive occurrences of symbols and subformulae in many important instances. In particular, for \( \Phi \in \POS_{P_n}(\L) \), a strictly positive operator form in the \( n \)-ary predicate \( P_n \), its `realization' $x \rea \Phi$ will be in \( \POS_{\rea P_n}(\L) \), i.e.\ a strictly positive operator form in the \( (n + 1) \)-ary predicate $\rea P_n$. Since a fixpoint axiom of $\IIDP1$ will be realized if we can provide functions transforming realizers of the fixpoint predicate into realizers of the corresponding operator applied to that predicate and vice versa, if we postulate these functions to be identity we get that $\I_{\Phi}(\vec{\fv}) \liff \Phi(\I_{\Phi};\vec{\fv})$ has a realizer iff
\begin{align}\label{eq:reaHAx}
  \fv_n \rea \I_{\Phi}(\vec{\fv}) \liff \fv_n \rea \Phi(\I_{\Phi};\vec{\fv})
\end{align}
holds. By above this biconditional is itself (equivalent to) a strictly positive fixpoint axiom,
and since realizability is a transformation into negative formulae, \eqref{eq:reaHAx} is actually equivalent to an axiom of $\IIDP1(\AnHierC)$. In this section we make this realizability interpretation of $\IIDP1$ in $\IIDP1(\AnHierC)$ precise (Theorem \ref{thm:rea->Lambda}).

First, we augment the realizability transformation with a clause for the `fixpoint generators' $P$, and subsequently for all fixpoint predicates $\I_{\Phi}$, in the manner of \cite[§6]{Buchholz:1981}.

\begin{Def}\label{def:reaP}
  For the language $\LPHAP$, we stipulate $\rea P_n = P_{n + 1}$, that is
  \begin{align*}
    \tau \rea P_n(\vec{x}) \text{ is } P_{n + 1}(\vec{x},\tau)
  \end{align*}
  in the final clause of the definition of realizability.
\end{Def}

\begin{Lma}\label{lma:reaPOS}
  In $\LPHAP$ with Definition \ref{def:reaP} we have that $\fv_n \rea \Phi$ for $\Phi \in \POS_{P_n}(\LHAP)$ is in $\POS_{P_{n + 1}}(\LHAP) \cap\, \AnHierC$. More generally, let $\L$ extend $\LHAP$ with a set of relation symbols, \( R \) being among them. Suppose $\rea R$ is not the same as $\rea Q$ for any other relation $Q$.
  If $\varphi \in \L$ has only strictly positive occurrences of $R$, $\fv_n \rea \varphi$ is negative and has only strictly positive occurrences of $\rea R$, the number of which equals the number of occurrences of $R$ in $\varphi$.
  \begin{proof}
    We show by induction on complexity that for any formula $\phi$ of $\L$ where $R$ occurs only strictly positively (if at all) and any term $\tau$, $\tau \rea \phi$ has only strictly positive occurrences of $\rea R$, the number of which equals the number of occurrences of $R$ in $\phi$, and $\mathrm{FV}(\tau \rea \phi) = \mathrm{FV}(\phi) \cup \mathrm{FV}(\tau)$ if $R$ occurs in $\phi$.
    \begin{itemize}
    \item The atomic cases are trivial.
    \item The inductive steps for the Boolean connectives are straightforward, so we present only the argument for the implication.

      Thus consider the case when $\phi$ is an implication $\psi \limp \theta$, where $\psi$ does not contain $R$, $\theta$ has only strictly positive occurrences of $R$ and for all terms $\sigma_1$ and $\sigma_2$ do we have that $\sigma_1 \rea \psi$ has no occurrence of $\rea R$, the number of occurrences of $\rea R$ in $\sigma_2 \rea \theta$ is the same as the number of occurrences of $R$ in $\theta$, and if $\theta$ has at least one occurrence of $R$ then the free variables of $\sigma_2 \rea \theta$ are those of $\theta$ and $\sigma_2$.

      Let $\tau$ be a term. Then
      \begin{align*}
        \tau \rea (\psi \limp \theta) = \forall x (x \rea \psi \limp (\tau x\den \land\, \tau x \rea \theta))
      \end{align*}
      has only strictly positive occurrences of $\rea R$ by induction hypothesis, since all of these are in $\tau x \rea \theta$. Thus the number of such occurrences equals the number of such occurrences in $\tau x \rea \theta$, which by induction hypothesis is the number of occurrences of $R$ in $\theta$, i.e.\ in $\psi \limp \theta$. If $R$ occurs in $\psi \limp \theta$ then it occurs in $\theta$, whence
      \begin{align*}
        \mathrm{FV}(\tau \rea (\psi \limp \theta))
        &= \mathrm{FV}(\forall x (x \rea \psi \limp (\tau x\den \land\, \tau x \rea \theta)))\\
        &= (\mathrm{FV}(\psi) \cup ((\mathrm{FV}(\tau) \cup \mathrm{FV}(\theta)) \cup \{x\})) \setminus \{x\}\\
        &= \mathrm{FV}(\psi) \cup \mathrm{FV}(\theta) \cup \mathrm{FV}(\tau)\\
        &= \mathrm{FV}(\psi \limp \theta) \cup \mathrm{FV}(\tau)
      \end{align*}
      by Lemma \ref{lma:reaFV} and induction hypothesis, since $x$ does not occur in $\psi$, $\theta$ and $\tau$.
    \item Since the existential case contains all essential ideas from the universal case but is slightly more involved, we omit the universal case.

      Consider the case when $\phi$ starts with an existential quantifier, $\exists x \psi$, where $\psi$ only has strictly positive occurrences of $R$ and for any term $\sigma$ and formula $\theta$ with only strictly positive occurrences of $R$ and the same complexity as $\psi$, $\sigma \rea \theta$ is strictly positive in $\rea R$, has as many occurrences of $\rea R$ as $\theta$ has of $R$ and satisfies $\FV(\sigma \rea \theta) = \FV(\theta) \cup \FV(\sigma)$ if $R$ occurs in $\theta$.

      Let $\tau$ be a term. By convention we can assume its variables are not bound in $\exists x \psi$. Then
      \begin{align*}
        \tau \rea (\exists x \psi) = \copr \tau \rea (\psi(x/\copl \tau))
      \end{align*}
      Since $\psi(x/\copl \tau)$ has the same complexity as $\psi$ the induction hypothesis applies, whence $\copr \tau \rea\, (\psi(x/\copl \tau))$ is strictly positive in $\rea R$ and has as many occurrences of $\rea R$ as $\psi(x/\copl \tau)$, and hence $\exists x \psi$, has of $R$. Also, if $R$ occurs in $\exists x \psi$ it occurs in $\psi(x/\copl \tau)$ whence
      \begin{align*}
        \FV(\tau \rea \exists x \psi)
        &= \FV(\copr \tau \rea (\psi(x/\copl \tau)))\\
        &= \FV(\psi(x/\copl \tau)) \cup \FV(\copr \tau)\\
        &= \FV(\psi) \setminus \{x\} \cup \FV(\tau)\\
        &= \FV(\exists x \psi) \cup \FV(\tau)
      \end{align*}
      by induction hypothesis.
    \end{itemize}
    The result now follows with Lemma \ref{lma:reaFV}.
  \end{proof}
\end{Lma}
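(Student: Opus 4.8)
The plan is to reduce both assertions to a single induction establishing a strengthened claim, and then to read off the two statements with the help of Lemma~\ref{lma:reaFV}. Fix a language $\L \supseteq \LHAP$ together with a relation symbol $R$ such that $\rea R \neq \rea Q$ for every other relation symbol $Q$. I would prove, by induction on the number of logical symbols of $\phi$ (a measure unaffected by substituting a term for a variable), that for every $\phi \in \L$ in which $R$ occurs only strictly positively and every term $\tau$: (a) $\tau \rea \phi$ has only strictly positive occurrences of $\rea R$; (b) the number of occurrences of $\rea R$ in $\tau \rea \phi$ equals the number of occurrences of $R$ in $\phi$; and (c) if $R$ occurs in $\phi$ then $\FV(\tau \rea \phi) = \FV(\phi) \cup \FV(\tau)$. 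Clause~(c) is carried along because Lemma~\ref{lma:reaFV} only yields $\FV(\phi) \subseteq \FV(\tau \rea \phi) \subseteq \FV(\phi) \cup \FV(\tau)$, whereas to land in $\POS_{P_{n+1}}(\LHAP)$ we must know that $\fv_n$ really is free in $\fv_n \rea \Phi$.

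In the base cases the only nontrivial one is $\phi = R(\vec{\sigma})$, where $\tau \rea \phi = (\rea R)(\vec{\sigma},\tau)$: a single, strictly positive occurrence of $\rea R$, matching the single occurrence of $R$, with free variables exactly those of $\vec{\sigma}$ and $\tau$. For $Q(\vec{\sigma})$ with $Q \neq R$ we get $(\rea Q)(\vec{\sigma},\tau)$, which contains no $\rea R$ precisely because of the distinctness hypothesis, so both counts are $0$ and (c) is vacuous; likewise for equations and $\falsum$. For the inductive steps one invokes the realizability clauses. The key case is implication: since $R$ is strictly positive, $\phi = \psi \limp \theta$ has $R$ absent from $\psi$ and strictly positive in $\theta$; by the induction hypothesis $x \rea \psi$ contains no $\rea R$, so in $\tau \rea(\psi\limp\theta) = \forall x(x \rea \psi \limp (\tau x \den \land \tau x \rea \theta))$ every occurrence of $\rea R$ lies inside $\tau x \rea \theta$, which sits in a strictly positive position; the count and, using Lemma~\ref{lma:reaFV}, the free-variable identity follow from the induction hypothesis applied to $\theta$ with realizer $\tau x$. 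The cases of $\land$ and $\lor$ are analogous and simpler, and $\forall x \psi$ is handled just as the implication, recursing on $\psi$ with realizer $\tau x$ (assuming, per the renaming convention, that the variables of $\tau$ are not bound in $\phi$). In the existential case $\tau \rea \exists x \psi = \copr\tau \rea (\psi(x/\copl\tau))$ the choice of measure does the work: $\psi(x/\copl\tau)$ has the same number of logical symbols as $\psi$, hence strictly fewer than $\exists x\psi$; substituting the term $\copl\tau$ changes neither the positions nor the count of $R$; and since $\FV(\copl\tau) \subseteq \FV(\tau)$, clause~(c) is preserved.

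It then remains to assemble the two statements. The general claim is immediate: (a) and (b) give the strict positivity and the matching count of $\rea R$, and Lemma~\ref{lma:reaFV} gives that $\fv_n \rea \varphi$ is negative. For the first statement, apply the induction with $\L = \LPHAP$ and $R = P_n$, so that $\rea R = P_{n+1}$, which by Definition~\ref{def:reaP} is distinct from $\rea P_m = P_{m+1}$ for every $m \neq n$; taking $\varphi = \Phi \in \POS_{P_n}(\LHAP)$ and $\tau = \fv_n$, we find that $\fv_n \rea \Phi$ has at least one occurrence of $P_{n+1}$ (since $\Phi$ contains $P_n$) and all its $P_{n+1}$-occurrences are strictly positive, by (c) its free variables are exactly $\FV(\Phi) \cup \{\fv_n\} = \{\fv_0, \dotsc, \fv_n\}$, and by Lemma~\ref{lma:reaFV} it is negative, hence almost negative. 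Therefore $\fv_n \rea \Phi \in \POS_{P_{n+1}}(\LHAP) \cap \AnHierC$.

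I expect the only real difficulty to be bookkeeping: choosing the induction measure so that the existential clause (where realizability substitutes a term into a subformula) counts as a strictly smaller instance, and, in clause~(c), tracking that an occurrence of $R$ in $\phi$ forces the realizer term — which is always a combinator-term built over $\tau$ and therefore contains all of $\FV(\tau)$ — to resurface at some atom $R(\vec{\sigma})$, so that $\FV(\tau)$ is genuinely recovered. No conceptually hard step is involved; the implication case and the care with $\alpha$-renaming are what need attention.
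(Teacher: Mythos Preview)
Your proposal is correct and follows essentially the same approach as the paper's proof: the paper carries out precisely the induction you describe, proving your clauses (a), (b), and (c) jointly for arbitrary realizer terms $\tau$, treats the implication and existential cases as the nontrivial ones (noting, as you do, that the substituted subformula in the existential clause has the same complexity as the original), and then appeals to Lemma~\ref{lma:reaFV} for negativity. Your write-up is in fact slightly more explicit than the paper's about why clause~(c) is needed for the first statement and about the choice of induction measure.
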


Another consequence of the above is that $x \rea \Phi(\I_{\Phi};\vec{y})$ is strictly positive in $\rea \I_{\Phi}$, the particular choice of which we can now specify.

\begin{Def}\label{def:reaIID}
  For the language $\LIDHAP$, given $\Phi \in \POS_{P_n}(\LHAP)$ we stipulate $\rea \I_{\Phi} = \I_{\fv_n \rea \Phi}$,
  the almost negative strictly positive operator form from Lemma \ref{lma:reaPOS}, that is
  \begin{align*}
    \tau \rea \I_{\Phi}(\vec{x}) \text{ is } \I_{\fv_n \rea \Phi}(\vec{x},\tau)
  \end{align*}
  in the final clause of the definition of realizability.
\end{Def}

\begin{Thm}\label{thm:rea->Lambda}
  For the realizability interpretation given by Definitions~\ref{def:real} and \ref{def:reaIID}, $\IIDP1 \prf \varphi \Rightarrow \IIDP1(\AnHierC) \prf \exists x (x \rea \varphi)$ for $\varphi \in \LIDHAP$.
  \begin{proof}
    By Lemma \ref{lma:int->rea} and compactness it is enough to show that $\IIDP1(\AnHierC) \prf \exists x (x \rea \varphi)$ for each non-logical axiom $ \varphi $ of $\IIDP1$. Now, every axiom except the induction and fixpoint schemata is covered by \cite[VII.\ Theorem 1.5]{Beeson:1985}. As for the induction axiom $\mathrm{Ind}_{\varphi}$,
    \begin{align*}
      \HAP \vdash \cor \rea \mathrm{Ind}_{\varphi}
    \end{align*}
    For the fixpoint axioms, we observe the following.
    By Definition \ref{def:reaIID} realizability is a transformation of the language $\LIDHAP$ into the fragment defined by the almost negative strictly positive operator forms. Thus given a fixpoint axiom $\forall \vec{y} ( \I_{\Phi}(\vec{y}) \liff \Phi(\I_{\Phi};\vec{y})) $ with $\Phi \in \POS_{P_n}(\LHAP)$ let $\Psi \in \POS_{P_{n + 1}}(\LHAP)$ be the operator form from Lemma \ref{lma:reaPOS} such that $\rea \I_{\Phi} = \I_{\Psi}$.
    Then
    \begin{align*}
      \IIDP1(\AnHierC) \vdash x \rea \I_{\Phi}(\vec{y})
      &\liff \I_{\Psi}(\vec{y},x) &&\text{definition}\\
      &\liff \Psi(\I_{\Psi};\vec{y},x) &&\text{fixpoint axiom}\\
      &\liff \Psi(\vec{y},x)(\I_{\Psi}) &&\text{definition}\\
      &\liff (\fv_n \rea \Phi)(\vec{y},x)(\I_{\Psi}) &&\text{Lemma~\ref{lma:reaPOS}}\\
      &\liff (x \rea \Phi(\vec{y}))(\I_{\Psi})  &&\text{Lemma~\ref{lma:readist}}\\
      &\liff (x \rea \Phi(\vec{y}))(\rea \I_{\Phi}) &&\text{definition}
    \end{align*}
    Appealing to Lemma \ref{lma:reasubst} this equivalence, with $x$ universally quantified, is equivalent to $\cop \app \coid \app \coid \rea (\I_{\Phi}(\vec{y}) \liff \Phi(\I_{\Phi};\vec{y}))$. 
    Hence
    \begin{align*}
      \IIDP1(\AnHierC) \prf (\lambda \vec{y}. \cop \app \coid \app \coid) \rea \forall \vec{y} (\I_{\Phi}(\vec{y}) \liff \Phi(\I_{\Phi};\vec{y}))\text,
    \end{align*}
    which concludes the proof.
  \end{proof}
\end{Thm}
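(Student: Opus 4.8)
The plan is to reduce, via Lemma~\ref{lma:int->rea} together with compactness, to the task of producing for each non-logical axiom $\varphi$ of $\IIDP1$ a term that $\IIDP1(\AnHierC)$ proves realizes $\varphi$. First I would handle the axioms of $\HAP(\LIDHAP)$ apart from induction: these are (up to the combinatory and arithmetical machinery) standard, their realizers do not mention the fixpoint predicates, and the verifications are the usual ones, so one can cite \cite[VII.\ Theorem 1.5]{Beeson:1985}. For the induction schema $\mathrm{Ind}_\varphi$ the recursor $\cor$ of Proposition~\ref{prp:coHAP} realizes it, again by a routine induction on $\varphi$ carried out inside $\HAP$.

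The essential case is a fixpoint axiom $\forall\vec y(\I_\Phi(\vec y)\liff\Phi(\I_\Phi;\vec y))$ with $\Phi\in\POS_{P_n}(\LHAP)$. The key observation is that, by Definition~\ref{def:reaIID}, the formula $x\rea\I_\Phi(\vec y)$ is just $\I_{\fv_n\rea\Phi}(\vec y,x)$, and by Lemma~\ref{lma:reaPOS} the operator form $\fv_n\rea\Phi$ lies in $\POS_{P_{n+1}}(\LHAP)\cap\AnHierC$; writing $\Psi$ for it, the predicate $\rea\I_\Phi = \I_\Psi$ and its defining axiom are therefore already part of $\IIDP1(\AnHierC)$. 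I would then chase the equivalences
\begin{align*}
  x\rea\I_\Phi(\vec y) &\liff \I_\Psi(\vec y,x) \liff \Psi(\I_\Psi;\vec y,x)\\
  &\liff (\fv_n\rea\Phi)(\vec y,x)(\I_\Psi) \liff (x\rea\Phi(\vec y))(\I_\Psi) \liff x\rea\Phi(\I_\Phi;\vec y)\text,
\end{align*}
in $\IIDP1(\AnHierC)$, where the second step invokes the fixpoint axiom for $\I_\Psi$ and the last two steps use that realizability commutes with term- and relation-substitution (Lemmas~\ref{lma:readist} and~\ref{lma:reasubst}). This shows that the biconditional $\fv_n\rea\I_\Phi(\vec y)\liff\fv_n\rea\Phi(\I_\Phi;\vec y)$ is provable, so that the identity function realizes each direction; hence $\cop\app\coid\app\coid$ realizes $\I_\Phi(\vec y)\liff\Phi(\I_\Phi;\vec y)$ and $\lambda\vec y.\cop\app\coid\app\coid$ realizes the universally quantified axiom.

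I expect the main obstacle to be the substitution bookkeeping that makes the equivalence chain literally valid — in particular checking the freshness side conditions so that $(x\rea\Phi(\vec y))(\rea\I_\Phi)$ and $x\rea\Phi(\I_\Phi;\vec y)$ are the same formula (an application of Lemma~\ref{lma:reasubst}), and confirming that the combinator terms in question provably denote, as demanded by the realizability clauses for $\limp$ and $\forall$. A second, logically prior point, which I would isolate as Lemma~\ref{lma:reaPOS}, is that realizing a strictly positive operator form preserves strict positivity of the shifted parameter and yields an almost negative formula; this is exactly what licenses treating $\I_\Psi$ as a predicate of $\IIDP1(\AnHierC)$, and without it the reduction has no target theory.
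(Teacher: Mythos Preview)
Your proposal is correct and follows essentially the same approach as the paper's own proof: the same reduction via Lemma~\ref{lma:int->rea} and compactness, the same appeal to \cite[VII.\ Theorem 1.5]{Beeson:1985} and the recursor $\cor$ for the $\HAP$ axioms and induction, and the same chain of equivalences for the fixpoint axioms culminating in the realizer $\lambda\vec y.\,\cop\app\coid\app\coid$. The bookkeeping concerns you anticipate (Lemmas~\ref{lma:readist}, \ref{lma:reasubst}, \ref{lma:reaPOS}) are precisely the ones the paper invokes.
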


We close the section by proving that \( \IIDP{1}(\AnHier) \) is a conservative extension of \( \HAP \). The proof utilises the partial satisfaction predicates introduced in section~\ref{sec:Lambda}.

\begin{Thm}\label{thm:IIDP->HAP}
  $\IIDP1(\AnHierC)$ is interpretable in $\HAP$, keeping $\LHAP$ fixed.
  \begin{proof}
    We define an interpretation $\mathcal{F} \colon \LIDHAP \longrightarrow \LHAP$ which fixes $\LHAP$.
    To that extent, for $k \in \N$ let $T_k$ be the term $\coid_{\overrightarrow{\GN{\fv_{<k}}}}^{\vec{\fv}_{< k}}$ (the identity combinator modified so that $\GN{\fv_j}$ is mapped to (the value of) $\fv_j$), so that $T_k$ has exactly $\fv_0, \dotsc, \fv_{k - 1}$ free, $\HAP \prf T_k \den$ and $\HAP \prf T_k \app \GN{\fv_j} \= \fv_j$ for all $j < k$. Take an almost negative $\Phi \in \POS_{P_k}(\LHAP)$ and let $n$ be the smallest index for which $\Phi \in \AnHierC_n$, by Lemma \ref{lma:Lambdatelescope}. Set $\phi = \Phi(\lambda \vec{\fv}_{<k}.\fdSat_n(T_k,\fv_k);\vec{\fv}_{<k})$. 
    By Theorem \ref{thm:Sat} $\phi \in \LHAP$, and by Lemma \ref{lma:PosInsert} \( \phi \in \AnHierC_n \).
    Diagonalisation (Lemma \ref{lma:Diag}) yields a formula $\psi \in \AnHierC_n$ with free variables $\fv_0, \dotsc, \fv_{k - 1}$ and
    \begin{align*}
      \HAP \prf \psi(\vec{x}) \liff \phi(\vec{x},\GN{\psi})\text.
    \end{align*}
    Defining $\mathcal{F}(\I_{\Phi}) = \psi$ we thus see
    \begin{align*}
      \HAP \prf \mathcal{F}(\I_{\Phi}(\vec{x})) &\liff \psi(\vec{x})\\
      &\liff \phi(\vec{x},\GN{\psi})\\
      &\liff \Phi(\lambda \vec{\fv}_{< k}. \fdSat_n(T_k,\GN{\psi});\vec{x})\\
      &\liff \Phi(\lambda \vec{\fv}_{< k}. \psi(\vec{\fv}_{< k});\vec{x})\\
      &\liff \Phi(\psi;\vec{x})\\
      &\liff \Phi(\mathcal{F}(\I_{\Phi});\vec{x})\\
      &\liff \mathcal{F}(\Phi(\I_{\Phi};\vec{x}))
    \end{align*}
    by Corollary \ref{cor:Sat}. Setting $\mathcal{F}(\Psi) = \falsum$ for $\Psi \in \POS_P(\LHAP) \setminus \AnHierC$ (since these are inconsequential), the interpretation of the remaining axioms of $\IIDP1(\AnHierC)$ are immediate.
    Consequently $\HAP \prf \mathcal{F}[\IIDP1(\AnHierC)]$.
  \end{proof}
\end{Thm}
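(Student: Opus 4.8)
The plan is to construct a relative interpretation $\mathcal F\colon \LIDHAP \longrightarrow \LHAP$ that is the identity on every symbol of $\LHAP$ and assigns to each fixpoint predicate $\I_\Phi$ an $\LHAP$-formula; it then suffices to check that $\HAP$ proves the $\mathcal F$-translation of every non-logical axiom of $\IIDP1(\AnHierC)$. The axioms inherited from $\HAP(\LIDHAP)$ give no trouble: $\mathcal F$ maps an $\LIDHAP$-formula to an $\LHAP$-formula, the arithmetical axioms are fixed, and $\HAP$ already has the induction schema for all of $\LHAP$. All the work lies in choosing, for each almost negative strictly positive operator form $\Phi$, a defining formula for $\I_\Phi$ that $\HAP$-provably satisfies the fixpoint equivalence; for operator forms outside $\AnHierC$ one simply sets $\mathcal F(\I_\Phi)=\falsum$, since the corresponding axioms are not part of $\IIDP1(\AnHierC)$.

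For an almost negative $\Phi\in\POS_{P_k}(\LHAP)$, fix a standard coding of $\LHAP$ and let $n$ be least with $\Phi\in\AnHierC_n$ (Lemma~\ref{lma:Lambdatelescope}). Using the combinators available in $\HAP$, form a total term $T_k$ with exactly $\fv_0,\dots,\fv_{k-1}$ free such that $\HAP\prf T_k\den$ and $\HAP\prf T_k\app\GN{\fv_j}\=\fv_j$ for $j<k$, i.e.\ $T_k$ codes the assignment sending $\GN{\fv_j}$ to $\fv_j$. Substitute the level-$n$ satisfaction predicate of Theorem~\ref{thm:Sat} for the parameter predicate:
\[
  \phi \;=\; \Phi\bigl(\lambda\vec\fv_{<k}.\fdSat_n(T_k,\fv_k);\,\vec\fv_{<k}\bigr),
\]
a formula with free variables among $\fv_0,\dots,\fv_k$. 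Since $\fdSat_n$ is a $\AnHierN_n$-formula, hence a $\AnHierC_n$-formula, and since it has been inserted into the (strictly positive) positions of $P_k$ inside the $\AnHierC_n$ operator form $\Phi$, Lemma~\ref{lma:PosInsert} gives $\phi\in\AnHierC_n$; here one only has to be careful that the bound-variable conventions of Section~\ref{sec:prel} prevent capture of $\fv_k$ and of the free variables of $T_k$, and that term substitution takes precedence over the relation substitution. The Diagonal Lemma~\ref{lma:Diag}, applied inside $\AnHierC_n$, then yields a $\AnHierC_n$-formula $\psi$ with free variables $\fv_0,\dots,\fv_{k-1}$ and $\HAP\prf\psi(\vec x)\liff\phi(\vec x,\GN\psi)$, and I would set $\mathcal F(\I_\Phi)=\psi$.

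It remains to verify the fixpoint axiom for such $\Phi$. Unfolding the translation and the diagonal equivalence,
\[
  \HAP\prf \mathcal F\bigl(\I_\Phi(\vec x)\bigr) \liff \psi(\vec x) \liff \phi(\vec x,\GN\psi) \liff \Phi\bigl(\lambda\vec\fv_{<k}.\fdSat_n(T_k,\GN\psi);\,\vec x\bigr).
\]
By Corollary~\ref{cor:Sat} together with $\HAP\prf T_k\app\GN{\fv_j}\=\fv_j$, one has $\HAP\prf\fdSat_n(T_k,\GN\psi)\liff\psi(\vec\fv_{<k})$; substituting this equivalence into the $P_k$-positions of $\Phi$ shows that the right-hand side is $\HAP$-equivalent to $\Phi(\psi;\vec x)$, which is exactly $\mathcal F(\Phi(\I_\Phi;\vec x))$ because $\mathcal F$ fixes $\LHAP$ and commutes with operator-form substitution. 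Hence $\HAP$ proves the $\mathcal F$-translation of every fixpoint axiom $(\Ax_\Phi)$ with $\Phi\in\POS_P(\LHAP)\cap\AnHierC$, and therefore $\HAP\prf\mathcal F[\IIDP1(\AnHierC)]$.

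I expect the real difficulty to sit in the ingredients rather than in this argument: in the construction of the partial satisfaction predicates $\fdSat_n$ that both stay within $\AnHierC_n$ and provably obey the compositional and substitution clauses underlying Corollary~\ref{cor:Sat} (Theorem~\ref{thm:Sat}), and in the verification that inserting $\fdSat_n$ into the strictly positive positions of an $\AnHierC_n$ operator form does not raise the level (Lemma~\ref{lma:PosInsert}). It is precisely this level-preservation — that the satisfaction predicate for level $n$ is again of level $n$ — that makes the diagonalisation land back in $\AnHierC_n$ and stops the construction from spiralling upward through the hierarchy; within the proof of the theorem proper, the only fiddly point is the bookkeeping of free and bound variables in the nested substitutions.
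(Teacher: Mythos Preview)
Your proposal is correct and follows essentially the same route as the paper's proof: define $\mathcal F$ as the identity on $\LHAP$, interpret each almost negative $\I_\Phi$ by diagonalising the formula $\Phi(\lambda\vec\fv_{<k}.\fdSat_n(T_k,\fv_k);\vec\fv_{<k})$ inside $\AnHierC_n$, and verify the fixpoint axiom via Corollary~\ref{cor:Sat}. Your additional remarks on why the $\HAP(\LIDHAP)$ axioms (in particular induction) go through, and on where the real work hides (Theorem~\ref{thm:Sat} and Lemma~\ref{lma:PosInsert}), are accurate and slightly more explicit than the paper's treatment.
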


\section{Conclusion}
\label{sec:Conc}

Tying together the above results, we can prove the wanted conservativity result.

\begin{Thm}\label{thm:cons}
  $\IID1$ is conservative over $\HA$.
  \begin{proof}
    Let $\varphi$ be a sentence of $\LHA$. Then we have the following chain of implications:
    \begin{align*}
      \IID1 \prf \varphi 
      &\Rightarrow \IIDP1 \prf \varphi &&\text{Lemma~\ref{lma:ID->IDP}}\\
      &\Rightarrow \IIDP1(\AnHierC) \prf \exists x (x \rea \varphi) &&\text{Theorem \ref{thm:rea->Lambda}}\\
      &\Rightarrow \HAP \prf \exists x (x \rea \varphi) &&\text{Theorem \ref{thm:IIDP->HAP}}\\
      &\Rightarrow \HA \prf \varphi &&\text{Proposition~\ref{prp:HAPe->HA}}
    \end{align*}
  \end{proof}
\end{Thm}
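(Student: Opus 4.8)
The plan is to assemble the three interpretations developed across Sections~\ref{sec:HAP}--\ref{sec:Fix} into a single chain of implications, applied to an arbitrary sentence $\varphi \in \LHA$. First I would pass from $\IID1$ to its partial-term counterpart $\IIDP1$: the fixpoint axioms are literally the same in both theories and $\HAP(\LIDHAP)$ proves $\HA(\LIDHA)$, so Lemma~\ref{lma:ID->IDP} gives $\IID1 \prf \varphi \Rightarrow \IIDP1 \prf \varphi$. Second, I would apply the realizability interpretation of Section~\ref{sec:rIID}: by Theorem~\ref{thm:rea->Lambda}, using the clauses of Definitions~\ref{def:reaP} and~\ref{def:reaIID} that turn a strictly positive operator form into an almost negative one, we obtain $\IIDP1 \prf \varphi \Rightarrow \IIDP1(\AnHierC) \prf \exists x(x \rea \varphi)$. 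Since $\varphi$ is arithmetic, $\exists x(x \rea \varphi)$ is genuinely an $\LHAP$-formula (Lemma~\ref{lma:reaFV}).

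Third, I would eliminate the fixpoint predicates: Theorem~\ref{thm:IIDP->HAP} interprets $\IIDP1(\AnHierC)$ in $\HAP$ by an interpretation that fixes $\LHAP$, so it maps $\exists x(x \rea \varphi)$ to itself, yielding $\HAP \prf \exists x(x \rea \varphi)$. Finally I would invoke Proposition~\ref{prp:HAPe->HA} — the packaged form of van den Berg and van Slooten's result — which states that for arithmetic $\varphi$, $\HAP \prf \exists x(x \rea \varphi)$ implies $\HA \prf \varphi$. Concatenating these four implications is exactly the conservativity statement.

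I do not expect a genuine obstacle at this stage: all the substance has been front-loaded into Lemma~\ref{lma:ID->IDP}, Theorems~\ref{thm:rea->Lambda} and~\ref{thm:IIDP->HAP}, and Proposition~\ref{prp:HAPe->HA}. The one point demanding care is the running bookkeeping that keeps $\LHA$ a sublanguage of $\LHAP$ which does not route through $\app$ or the combinators $\cok, \cos, \dots$; this is what makes the realizability of an arithmetic formula land inside $\LHAP$, makes the interpretation of Theorem~\ref{thm:IIDP->HAP} act trivially on it, and makes Proposition~\ref{prp:HAPe->HA} applicable — precisely the reasons the earlier sections insist on separating the arithmetical fragment from the combinatory machinery.
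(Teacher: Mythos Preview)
Your proposal is correct and follows exactly the paper's own argument: the same four-step chain through Lemma~\ref{lma:ID->IDP}, Theorem~\ref{thm:rea->Lambda}, Theorem~\ref{thm:IIDP->HAP}, and Proposition~\ref{prp:HAPe->HA}. Your additional remarks about why $\exists x(x \rea \varphi)$ lies in $\LHAP$ and is fixed by the interpretation of Theorem~\ref{thm:IIDP->HAP} make explicit a point the paper leaves implicit.
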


The argument we have presented combines two interpretations: a realizability interpretation of a fixpoint theory into the subtheory of fixpoints for almost negative operator forms and a direct interpretation of this subtheory in Heyting arithmetic.
Full conservativity does not directly follow from these reductions because realizability and provability do not coincide for Heyting arithmetic: there are statements in the language of arithmetic which are (Kleene-) realizable yet not provable.
As the two notions agree on the fragment of almost negative formulae, conservativity for this class of sentences holds.
But by lifting the two interpretations to the corresponding theories formulated over Beeson's richer logic of partial terms the stronger conservation result becomes a consequence of van den Berg and van Slooten's theorem; for arithmetic formulae, provability in Heyting arithmetic coincides with realizability in Heyting arithmetic with partial terms.

A key ingredient in our argument is the existence of partial satisfaction predicates for a formula-hierarchy which exhausts the almost negative formulae.
These partial satisfaction predicates do not rely on the presence of partial terms in any essential way, and arithmetic counterparts are available which, provably in \( \HA \), are compositional truth predicates for the arithmetic formulae of the corresponding level.
A natural question is whether there exist partial satisfaction predicates for classes of formulae including the positive connectives \( \lor \) and \( \exists \), whether in the language with partial terms or pure arithmetic.
To the authors' knowledge the answer to this question is unknown, which is somewhat surprising given the importance of partial truth predicates in the proof theory of classical systems of arithmetic.

There can, however, not be a hierarchy of arithmetic formulae which (i) is closed under substitution in strictly positive positions (Lemma \ref{lma:PosInsert}), (ii) is closed under diagonalisation (Lemma \ref{lma:Diag}), (iii) exhausts the \emph{entire} language $\LHA$, and (iv) has partial satisfaction predicates for every level.
Such a hierarchy with corresponding satisfaction predicates would give rise to a direct interpretation of $\IID1$ in $\HA$ by generalising Theorem \ref{thm:IIDP->HAP}, but it would also allow the classical theory $\ID1$ to be interpreted in $\PA$ by the same argument.
This cannot be the case because $\ID1$ is strictly stronger than $\PA$. 
On the other hand, this line of reasoning shows that $\ID1(\AnHierC)$ \emph{is} conservative over $\PA$, again by generalising Theorem \ref{thm:IIDP->HAP}. Thus, the \emph{strictly} positive almost negative operator forms gives a weaker classical fixpoint theory than the merely positive almost negative ones, since the latter is equivalent to $\ID1$. We see that trying to carry out our argument for this class of fixpoints fails on Lemma \ref{lma:PosInsert}, since $\AnHierC_n$ is not closed under substitutions into merely positive positions.
A final note is that, since $\Pi_1$ has a $\Pi_1$ satisfaction predicate and is closed under substitutions into strictly positive positions, $\ID1(\Pi_1)$ is conservative over $\PA$ essentially by replacing $\AnHierC_n$ by $\Pi_1$ in the proof of Theorem \ref{thm:IIDP->HAP}. On the other hand $\ID1(\Pi_2)$ is not conservative over $\PA$, since it interprets the Kripke-Feferman theory of truth $\KF$ (Feferman, \cite{feferman:1991}) which proves the consistency of $\PA$.
It would therefore be interesting to identify the subclasses $\Gamma$ of $\Pi_2$ forms for which $\ID1(\Gamma)$ is still conservative.

We do believe this argument generalises to finite iterations $\IID{n}$ of fixpoint theories, however. The main idea would then be to repeat the application of Theorem \ref{thm:IIDP->HAP} inductively to reduce $\IIDP{n + 1}(\AnHierC)$ to $\IIDP{n}(\AnHierC)$, viewing $\HAP$ as $\IIDP0(\AnHierC)$ ($\IIDP{n}(\AnHierC)$ should mean \emph{all} operator forms are almost negative, not just the $n$th `top layer'). The possible issue we see in section \ref{sec:SatHAP} is that Theorem \ref{thm:Sat} requires (primitive) recursive transformations from $\AnHierC_n$ to $\AnHierN_n$ as in Lemma \ref{lma:Lambdatransform}, which is not as straightforward for countable vocabularies. This should be manageable by more careful attention to coding, or finally by a compactness argument in \ref{thm:IIDP->HAP}. The proof of Theorem \ref{thm:IIDP->HAP} should otherwise generalise, essentially by placing an `$\mathcal{F}$' defined by induction hypothesis in front of every appearance of $\Phi$ except the first.
In the rest of the argument of Theorem \ref{thm:cons}, generalising in this direction would add a countable number of relation symbols and axioms governing these, and Lemma \ref{lma:ID->IDP} clearly generalises to this case. For the results leading up to Theorem \ref{thm:rea->Lambda}, prominently Lemma \ref{lma:reaPOS}, we have taken some care not to specify the relation symbols in the base language, so reformulating Theorem \ref{thm:rea->Lambda} slightly this step should also generalise by induction. Finally, the final step would remain unchanged.

A natural question would be whether a generalisation of this argument works also for some transfinite iterations $\IID{\alpha}$. While we have no clear ideas in this direction, it's a reasonable question for future work.

\bibliographystyle{plain}
\bibliography{BibliographyConsIID}
\end{document}